\theoremstyle{plain}
\newtheorem{thm}{Theorem}[section]
\newtheorem{defi}{Definition}[section]
\newtheorem{prop}{Proposition}[section]
\newtheorem{lem}{Lemma}[section]
\newtheorem{rem}{Remark}[section]
\newtheorem{claim}{Claim}
\numberwithin{equation}{section}
\newcommand{\CL}{\mathcal{L}}
\title[Equidistributed closed geodesics Problem]{Equidistributed Closed Geodesics on Closed Finsler and Riemannian Surfaces}
\begin{document}

\thanks{2020 \em Mathematics Subject Classification\em. 70H12, 53D25, 53D42. \\
\indent E-mail $\&$ Address: \textsuperscript{1}huiliu00031514@whu.edu.cn, School of Mathematics and Statistics, Wuhan University, China.
\textsuperscript{2}
liulei30@email.sdu.edu.cn, School of Mathematics, Shandong University, China.\\
\indent Keywords: Finsler and Riemanian surfaces, closed geodesics, equidistribution, embedded contact homology.}

\maketitle

\begin{center}

\normalsize
Hui Liu\textsuperscript{1} and
Lei Liu\textsuperscript{2}
\bigskip

\end{center}

\date{}

\begin{abstract}
In this paper, we establish the existence of an equidistributed sequence of nondegenerate closed geodesics for generic Finsler, symmetric Finsler and Riemannian metrics on every closed surface. The proof relies on the volume property of embedded contact homology, established by Cristofaro-Gardiner, Hutchings and Ramos, along with specific local variational constructions and transversality arguments. Our approach is motivated by Irie's equidistribution result in \cite{Irie2018} for three-dimensional Reeb flows and the analogous result presented by Marques, Neves and Song \cite{MNS2017} for embedded minimal hypersurfaces.
\end{abstract}

\tableofcontents

\section{Introduction} \label{sec: introduction}

The present study investigates the distribution of closed geodesics on closed Finsler and Riemannian surfaces, which can be traced back to the work of Hadamard in the late 19th century \cite{Hadamard1898}. Using the Poincar\'e recurrence theorem, he demonstrated that the closed geodesics are densely distributed on any negatively curved closed Riemannian manifold. A dynamical proof was given by Anosov in his famous closing lemma, as the geodesic flow on negatively curved closed Riemannian manifold is also an ``Anosov" flow, (see \cite{Anosov1967} and \cite{KH1995}). For more revelent results on the density and equidistribution of closed geodesics, we refer to \cite{BBS1985},\cite{Bowen1972},\cite{CS2011},\cite{Zelditch1989} and the reference therein. For the open problems about geodesics, we refer to the survey \cite{BM2021}.

In general setting, the problem to the distribution of closed geodesics has been extensively studied for both Riemannian and Finsler manifolds. In 2015, by utilizing the volume property of embedded contact homology (ECH) established by Cristofaro-Gardiner, Hutchings and Ramos \cite{C-GHR2015}, Irie \cite{Irie2015} proved the density of Reeb orbits for a generic set of contact forms on every $3$-dimensional closed contact manifold, and established the density of closed geodesics for a generic set of Riemannian metrics on every closed surface. Inspired by the results of Marques, Neves and Song \cite{MNS2017} on the equidistribution of embedded minimal hypersurfaces, Irie \cite{Irie2018} discovered an equidistributed sequence of Reeb orbits for a generic set of contact forms on every $3$-dimensional closed contact manifold. It is well known that the Finsler and Riemannian geodesic flows are examples of Reeb flows. The space of Finsler metrics associates to an open subset in the space of contact forms, due to the fiberwise convexity. A conclusion of \cite{Irie2018} is that there exists an equidistributed sequence of closed geodesics for a generic set of Finsler metrics on every closed surface. However, due to the rigidity of symmetric Finsler and Riemannian metrics, the existence of an equidistributed sequence of closed geodesics for Riemannian and symmetric Finsler surfaces presents a more nuanced consideration. The Riemannian case, in particular, is more intricate because of the heightened rigidity exhibited by Riemannian metrics compared to Finsler metrics. Using the local perturbation of Finsler metrics in \cite{BI2016}, Chen \cite{Chen2019} proved the density of closed geodesics for a generic set of Finsler and symmetric Finsler metrics. Li and Staffa \cite{LS2022} proved the existence of an equidistributed sequence of closed geodesics for a generic set of Riemannian metrics on every closed surface using ``Weyl law" of the volume spectrum established by Liokumovich, Marques and Neves \cite{LMN2018}. In this paper, inspired by \cite{Irie2018} and \cite{MNS2017}, we further prove the existence of an equidistributed sequence of closed geodesics for a generic set of Finsler, symmetric Finsler and Riemannian metrics on every closed surface. This work not only provides a quantitative refinement of the generic density theorem in \cite{Chen2019}, but also generalizes the aforementioned result in \cite{LS2022} into the Finsler setting using an entirely distinct approach. 
Comparing to the Finsler case indicated from \cite{Irie2018}, our proof relies on a specific local variation of metrics and the transversality arguments in \cite{Irie2018} that adapted to the symmetric Finsler and Riemannian structure. In particular, the local variation is important in overcoming the challenges posted by the rigidity of symmetric Finsler and Riemannian metrics. This idea was also briefly and independently mentioned in \cite{LS2022} in 2022, while our paper was initially released on arXiv. Moreover, an analogous approach has been used in \cite{Pirprasad2022} and \cite{Prasad2023} to address the equidistributed periodic orbits on a closed oriented surface probably with boundary for a generic set of area-preserving diffeomorphisms.

\subsection{Main results}
We first refer to the following definition when discussing the concept of equidistribution.
\begin{defi}
Given a measure space $(X,\mathcal{A},\mu)$, where $X$ is a set, $\mathcal{A}$ is a $\sigma$-algebra and $\mu$ is a measure on $\mathcal{A}$ such that $\mu(X)$ is finite. A sequence of subsets $\{\Omega_i\}_{i\geq 1}\subset X$ is called equidistributed if the average integral of any smooth function $f$ over these subsets equals the average integral of $f$ over the entire space $X$. In another word, the sequence $\{\Omega_i\}_{i\geq 1}$ satisfies
$$
\lim_{N\rightarrow +\infty}\frac{1}{\sum_{i=1}^N \mu(\Omega_i)}\sum_{i=1}^N \int_{\Omega_i}fd\mu=\frac{1}{\mu(X)}\int_{X}f d\mu,\quad \forall f\in C^\infty(X,\mathbb{R}).
$$
\end{defi}

Before stating our main results, we first introduce some notations. Let $\Sigma$ be a closed and compact $2$-dimensional surface. Let $\mathcal F(\Sigma)$ and $\mathcal F^s(\Sigma)$ denote the collection of Finsler and symmetric Finsler metrics on $\Sigma$, respectively. In particular, $\mathcal F^s(\Sigma)$ is a subset of $\mathcal F(\Sigma)$. Let $\mathcal M(\Sigma)$ be the collection of Riemannian metrics on $\Sigma$, which forms a subset of $\mathcal F^s(\Sigma)$. For every Finsler metric $F$, we denote $\lambda_F$ the associated contact form on the unit cotangent bundle $S^*_F\Sigma$. If $F^2=g$ defines a Riemannian metric on $\Sigma$, we denote $\lambda_g:=\lambda_{F}$ the associated contact form on the unit cotangent bundle $S^*_g\Sigma$. Let $l_F$ be the length function on the $C^1$-loop space, and denote $l_g:=l_F$ if $F^2=g$ defines a Riemannian metric on $\Sigma$. Denote $\mathcal L^F$ the set of all Finsler $1$-currents on $(\Sigma,F)$, that is finite $\mathbb R_+$-linear formal combinations of closed geodesics with positive coefficients. Similarly, we denote $\mathcal L^g$ the set of all Riemannian $1$-currents on $(\Sigma,g)$. Given a Finsler or Riemannian $1$-current $C=\sum_i a_i \sigma_i$. For any differential $1$-form $\alpha$ on $S_F\Sigma$ and differential $1$-form $\beta$ on $S^*_F\Sigma$, we define $C(\alpha):=\sum_i a_i\int_{\tilde \sigma_i}\alpha$ and $C(\beta):=\sum_i a_i\int_{\gamma_{\sigma_i}}\beta$, where $\tilde \sigma\subset S_F\Sigma,\gamma_{\sigma}\subset S^*_F\Sigma$ are the corresponding curves of $\sigma$, see section~\ref{sub: Finsler and Rie setting}.
Moreover, in a given topological space, a certain property holds for generic elements in the topological space mean that these elements form a residual subset, i.e. it contains an intersection of countably many open and dense subsets.

Our first result concerns the existence of an ``almost" equidistributed sequences of $1$-currents for generic Finsler, symmetric Finsler and Riemannian metrics under the $C^\infty$-topology on every closed surface. The symmetric Finsler and Riemannian metrics are mainly considered.

\begin{thm}\label{thm: finsler}
For a generic $F\in \mathcal F^s(\Sigma)$, there exists a sequence of nondegenerate Finsler $1$-currents $\{C_N\}_{N\geq 1}\subset \CL^F$ such that
\begin{equation}\label{equ: limit of Reeb currents Finsler}
\lim_{N\rightarrow +\infty}C_N(f\lambda_F):=\int_{S^*_F\Sigma}f\lambda_F\wedge d\lambda_F,\quad \forall f\in C^\infty(\Sigma,\mathbb{R}),
\end{equation}
A similar statement holds for a generic Finsler metric $F\in \mathcal F(\Sigma)$.
\end{thm}

\begin{thm}\label{thm: riemann}
For a generic $g\in \mathcal M(\Sigma)$, there exists a sequence of nondegenerate Riemannian $1$-currents $\{C_N\}_{N\geq 1}\subset \CL^g$ such that
\begin{equation}\label{equ: limit of Reeb currents Riemannian}
\lim_{N\rightarrow +\infty}C_N(f\lambda_g)=\int_{S^*_g\Sigma}f\lambda_g\wedge d\lambda_g,\quad \forall f\in C^\infty(\Sigma,\mathbb{R}).
\end{equation}
\end{thm}

The validity of Theorem \ref{thm: finsler} - \ref{thm: riemann} extends to all closed surfaces. The proof relies on the utilization of ECH spectral invariants introduced in section \ref{sec: preliminaries}, along with the generic perturbation results in section \ref{sec: generic nondegenercy of Rimannian metrics} using transversality arguments. Note that the function $f$ is arbitrary on $\Sigma$, rather than $S^*_F\Sigma$. For a generic $F\in \mathcal F^s(\Sigma)$, one can use the same idea to improve \eqref{equ: limit of Reeb currents Finsler}  for every $f\in C^\infty(S^*_F\Sigma,\mathbb R)$ with symmetry $f(q,p)=f(q,-p),\forall (q,p)\in S^*_F\Sigma$. For generic Finsler metrics, the symmetric condition of $f$ can further be removed. Finally, using the method of approximation and selection in \cite{MNS2017}, we can construct a sequence of closed geodesics that are equidistributed on every closed surface for generic Finsler, symmetric Finsler and Riemannian metrics.

\begin{thm}\label{thm: equidis finsler}
For a generic $F\in \mathcal{F}^s(\Sigma)$, there exists a sequence of nondegenerate closed geodesics $\{\sigma_k\}_{k\geq 1}$ on $\Sigma$ that is equidistributed, satisfying
\begin{equation}\label{equ: limit of Finlser closed geodesics}
\lim_{k\rightarrow +\infty} \frac{\sigma_1(f\beta_F)+\cdots+\sigma_k(f\beta_F)}{l_F(\sigma_1)+\cdots+l_F(\sigma_k)}=\frac{\int_\Sigma fd\mathrm{vol}_F}{\mathrm{vol}_F\Sigma},\quad \forall f\in C^\infty(\Sigma,\mathbb{R}),
\end{equation}
where $\beta_F$ is the Hilbert form on $S_F\Sigma$, see \eqref{equ: def of Hilbert form}, and $d\mathrm{vol}_F$ is the Finsler volume form, see \eqref{equ: volume form}. A similar statement holds for generic $F\in\mathcal{F}(\Sigma)$.
\end{thm}

\begin{thm}\label{thm: equidis riemann}
For a generic $g\in\mathcal{M}(\Sigma)$, there exists a sequence of nondegenerate closed geodesics $\{\sigma_k\}_{k\geq 1}$ on $\Sigma$ that is equidistributed, satisfying
\begin{equation}\label{equ: limit of Riemannian closed geodesics}
\lim_{k\rightarrow +\infty} \frac{\sigma_1(fds_g)+\cdots+\sigma_k(fds_g)}{l_g(\sigma_1)+\cdots+l_g(\sigma_k)}=\frac{\int_\Sigma fd\mathrm{vol}_g}{\mathrm{vol}_g\Sigma},\quad \forall f\in C^\infty(\Sigma,\mathbb{R}),
\end{equation}
where $s_g$ represents the arc-length parameter on $\sigma$'s.
\end{thm}

Notice that the existence of an equidistributed sequence of closed geodesics for a generic $F\in\mathcal{F}(\Sigma)$ is a conclusion of \cite{Irie2018}, which can be reproved in the framework of geodesic flows. For symmetric Finsler and Riemannian metrics, the proofs are more delicate and does not follow from \cite{Irie2018}. Moreover, we refer to \cite{LS2022} for a different proof of Theorem \ref{thm: equidis riemann} using the ``Weyl law" of volume spectrum in the min-max theory established by Liokumovich, Marques and Neves \cite{LMN2018}.


\begin{rem}
The equidistributed sequences of closed geodesics $\{\sigma_k\}_{k\geq1}$ in Theorems \ref{thm: equidis finsler} and \ref{thm: equidis riemann} are selected from the sequence of $1$-currents $\{C_k\}_{k\geq 1}$ produced in Theorems \ref{thm: finsler} and \ref{thm: riemann} through proper iterations, where each $C_k$ exhibits an action that closely approximates to one of the $\mathrm{ECH}$ spectral invariants introduced in Proposition \ref{prop: properties of spectral invariant}. Additionally, for generic Finsler, symmetric Finsler and Riemannian metrics, the sequence $\{\sigma_k\}_{k\geq 1}$ can be chosen to be all nondegenerate even if the metrics are not bumpy, i.e. the closed geodesics are not all nondegenerate.
\end{rem}

This paper is organized as follows: In section \ref{sec: preliminaries}, we begin by introducing key properties of ECH spectral invariants. Subsequently, we present valuable relations between Reeb flows and Finsler (or Riemannian) geodesic flows. The Sard and Sard-Smale theorem are also mentioned in this section. In section \ref{sec: generic nondegenercy of Rimannian metrics}, we mainly prove Lemmas \ref{lem: genericity of nondegenercy 1} and Theorem \ref{lem: genericity of nondegenercy 2} for Finsler, symmetric Finsler and Riemannian metrics through a localized perturbation of Finsler metrics that adapt to the symmetric Finsler and Riemannian metrics, combined with some transversality techniques. Finally, in section \ref{sec: proof of main theorems}, we establish the proofs of Theorems \ref{thm: finsler} - \ref{thm: equidis riemann}, using the properties of ECH spectral invariants, the results in section \ref{sec: generic nondegenercy of Rimannian metrics} and the relations between Reeb flows and geodesic flows.

\vspace{3mm}
\textbf{Acknowledgments}. The authors would like to thank Professor Umberto Hryniewicz for his interest and Professor Gang Tian for his comments. The first author is supported by National Natural Science Foundation of China (NSFC) under grants No.12371195 and No.12022111. The second author is supported by the National Natural Science Foundation of China under grant No.12401238, the Natural Science Foundation of Shandong Province, China under grant No.ZR2024QA188.

\section{Preliminaries}\label{sec: preliminaries}

\subsection{Spectral invariants of ECH} \label{sub: c_sigma of ECH}
Let $(Y,\lambda)$ be an oriented closed contact 3-dimensional manifold with contact structure $\xi$ and contact form $\lambda$, which satisfies $\lambda\wedge d\lambda> 0$ everywhere on $Y$. Each contact form $\lambda$ gives rise to a unique Reeb vector field $R$, satisfying the conditions $\lambda(R)=1$ and $R\in\ker d\lambda$. Let $\varphi_{\lambda}^t$ be the corresponding flow of $R$. The periodic orbits of $R$ are referred to as Reeb orbits of $\lambda$, and we denote the collection of all Reeb orbits of $\lambda$ as $\mathcal P(Y,\lambda)$. Let $\CL(Y,\lambda)$ denote the collection of Reeb $1$-currents, which are finite $\mathbb{R}_+$-linear combinations of elements in $\mathcal{P}(Y,\lambda)$ with positive coefficients. Let $\CL_{\mathbb{Z}_+}(Y,\lambda)$ denote the Reeb $1$-currents with coefficients in $\mathbb{Z}_+$.  If $\gamma\in \mathcal P(Y,\lambda)$ is a Reeb orbit with period $T$, then the linearized Poincar\'e map of $\gamma$ is defined as linearized time $T$ map $P_\gamma=d\varphi^T_{\lambda}:\xi_{\gamma(0)}\rightarrow \xi_{\gamma(T)}$, which is symplectic with respect to $d\lambda$.

Let $c_1(\xi)$ be the first Chern class of $(\xi,J)$, where $J$ is an almost complex structure on $\xi$ satisfying $d\lambda_x(\cdot,J_x\cdot)>0,\forall x\in Y$. Let $\mathbb{Z}_d:=\mathbb{Z}/d\mathbb{Z}$. For any $\Gamma\in H_1(Y,\mathbb{Z})$, one can define a $\mathbb{Z}_2$-vector space $\mathrm{ECC}_*(Y,\lambda,\Gamma)$ generated by ECH generators with a relative $\mathbb{Z}_d$-grading, where an ECH generator $\alpha$ has the form $\alpha=\sum_{i=1}^km_i\gamma_i\in \mathcal L_{\mathbb{Z}_+}(Y,\lambda)$ with $[\alpha]=\Gamma$ and $m_i=1$ whenever $P_{\gamma_i}$ has eigenvalues in $\mathbb{R}\setminus\{\pm 1\}$ and $d$ denotes the divisibility of $c_1(\xi)+2\mathrm{PD}(\Gamma)\in H^2(Y,\mathbb{Z})$. In particular, there always exists a $\Gamma$ such that $c_1(\xi)+2\mathrm{PD}(\Gamma)\in H^2(Y,\mathbb{Z})$ is torsion. Fix such $\Gamma$, then $\mathrm{ECC}_*(Y,\lambda,\Gamma)$ has a relative $\mathbb{Z}$-grading and one can define the homology of $\mathrm{ECC}_*(Y,\lambda,\Gamma)$ with a differential, i.e., $\mathrm{ECH}_*(Y,\lambda,\Gamma)$. By Taubes \cite{Taubes2010}, the homology is independent of $\lambda$, then we can also write $\mathrm{ECH}_*(Y,\xi,\Gamma)$.


Let $\mathcal{A}:\mathcal L(Y,\lambda)\cap \Gamma\rightarrow \mathbb{R}_{\geq0} (\alpha=\sum_{i=1}^ka_i\gamma_i\mapsto \int_\alpha\lambda=\sum_{i=1}^ka_iT_i)$, where $T_i$ is the period of $\gamma_i$. Let $\mathrm{ECC}^a_*(Y,\lambda,\Gamma)$ be the subcomplex of $\mathrm{ECC}_*(Y,\lambda,\Gamma)$, which is generated by ECH generators $\alpha$ with $\mathcal{A}(\alpha)<a$,
then we can define the filtered ECH as the homology of this subcomplex, which we denote by $\mathrm{ECH}^a_*(Y,\lambda,\Gamma)$. Then for any element $\varsigma\in \mathrm{ECH}_*(Y,\lambda,\Gamma)\setminus\{0\}$, one can defined a spectral invariant
\begin{equation}\label{equ: spectral invariant}
c_\varsigma(Y,\lambda)=\inf\left\{a\in \mathbb{R}_+:\ \varsigma\in \mathrm{Im}\ i^a:\mathrm{ECH}^a_*(Y,\lambda,\Gamma)\rightarrow \mathrm{ECH}_*(Y,\xi,\Gamma)\right\}.
\end{equation}
Combining the results in \cite{C-GHR2015,Hutchings2011,Irie2015}, we have the following properties.
\begin{prop}\label{prop: properties of spectral invariant}
The spectral invariants satisfy the following properties:
\begin{itemize}
\item[(1)] If $c_\varsigma(Y,\lambda)>0$, then there exists $\alpha\in \mathcal L_{\mathbb{Z}_+}(Y,\lambda)$ such that $c_\varsigma(Y,\lambda)=\mathcal{A}(\alpha)$.
\item[(2)] $c_\varsigma(Y,a\lambda)=ac_\varsigma(Y,\lambda),\ \forall a>0$.
\item[(3)] $c_\varsigma(Y,\lambda)\leq c_\varsigma(Y,f\lambda)$ for every $f\in C^\infty(Y,\mathbb{R}_{\geq 1})$.
\item[(4)] $\displaystyle \lim_{i\rightarrow +\infty}c_\varsigma(Y,f_i\lambda)=c_\varsigma(Y,\lambda)$ if $\displaystyle\lim_{i\rightarrow +\infty} f_i=1$ in $C^0$-topology.
\item[(5)] There exists a sequence $\{\varsigma_k\}_{k\geq 1}\subset \mathrm{ECH}_*(Y,\xi,\Gamma)$ such that
\begin{eqnarray*}
\lim_{k\rightarrow +\infty}\frac{c_{\varsigma_k}(Y,\lambda)}{\sqrt{2k}}=\sqrt{\mathrm{vol(Y,\lambda)}},\quad
 \mathrm{vol(Y,\lambda)}:=\int_Y\lambda\wedge d\lambda.
\end{eqnarray*}
\end{itemize}
\end{prop}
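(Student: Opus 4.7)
The plan is to assemble the five properties from the existing toolkit of embedded contact homology, treating (2)--(4) as formal consequences of the definition (\ref{equ: spectral invariant}), (1) as a compactness/action-spectrum argument, and citing the Cristofaro-Gardiner-Hutchings-Ramos volume asymptotic for (5), which is the only genuinely deep input. All five statements are already collected in Section 2 of \cite{Irie2018}, so the proof essentially amounts to verifying that each argument there transports verbatim to the present setting.

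For (2), rescaling $\lambda \mapsto a\lambda$ with $a>0$ leaves the Reeb orbits unchanged as subsets of $Y$ but multiplies every action by $a$, so tautologically $\mathrm{ECC}_*^{ax}(Y,a\lambda,\Gamma) = \mathrm{ECC}_*^{x}(Y,\lambda,\Gamma)$ and the infimum in (\ref{equ: spectral invariant}) scales by $a$. For (3), I would invoke the Hutchings--Taubes cobordism map on filtered ECH: when $f\geq 1$, the ``trivial'' exact symplectic cobordism from $(Y,f\lambda)$ down to $(Y,\lambda)$ induces a chain map that is action-decreasing, commutes with the inclusions $i^a$, and respects the isomorphism with Seiberg--Witten Floer cohomology on distinguished classes, so any representative of $\varsigma$ of action below $a$ on the top end yields one on the bottom end; hence $c_\varsigma(Y,\lambda) \leq c_\varsigma(Y,f\lambda)$. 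Property (4) then follows by a two-sided squeeze: eventually $1-\varepsilon \leq f_i \leq 1+\varepsilon$, so applying (2) and (3) to the ratios $f_i/(1-\varepsilon)$ and $(1+\varepsilon)/f_i$ yields $(1-\varepsilon)c_\varsigma(Y,\lambda) \leq c_\varsigma(Y,f_i\lambda) \leq (1+\varepsilon)c_\varsigma(Y,\lambda)$.

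For (1), I would first handle the nondegenerate case: when $\lambda$ is nondegenerate, the set of ECH generators of action below any fixed constant is finite, so the action spectrum is closed and discrete in $\mathbb{R}_{\geq 0}$, and the infimum in (\ref{equ: spectral invariant}) is attained by an ECH generator $\alpha \in L_{\mathbb{Z}_+}(Y,\lambda)$. For degenerate $\lambda$, I would approximate by nondegenerate $f_n\lambda$ with $f_n \to 1$ in $C^\infty$, use (4) to get $c_\varsigma(Y,f_n\lambda)\to c_\varsigma(Y,\lambda)$, extract attaining generators $\alpha_n$, and pass to a limit $\alpha \in L_{\mathbb{Z}_+}(Y,\lambda)\cap \Gamma$ by an Arzel\`a--Ascoli argument on the underlying simple orbits, whose total lengths are uniformly bounded by $c_\varsigma(Y,\lambda)+1$.

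The main obstacle is (5), which I would cite rather than reprove. The asymptotic $c_{\varsigma_k}(Y,\lambda)/\sqrt{2k}\to \sqrt{\mathrm{vol}(Y,\lambda)}$ along a suitable sequence of classes is the main theorem of Cristofaro-Gardiner, Hutchings and Ramos \cite{C-GHR2015}. The auxiliary condition $I(\varsigma_{k+1},\varsigma_k)=2$ is arranged by choosing $\{\varsigma_k\}$ along a $U$-chain: the $U$-map on $\mathrm{ECH}_*(Y,\xi,\Gamma)$ lowers the grading by $2$, and for the chosen torsion class $\Gamma$ its iterated image remains nonzero in all sufficiently negative degrees, so picking $\varsigma_{k+1}\in U^{-1}(\varsigma_k)\setminus\{0\}$ produces a sequence along which the CGHR asymptotic still applies. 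Modulo these external inputs the proposition is complete.
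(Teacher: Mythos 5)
Your proposal is correct and takes the same route as the paper, which proves nothing here but simply cites Section 2 of Irie's paper (who in turn cites Cristofaro-Gardiner--Hutchings--Ramos and Hutchings for the substantive inputs); your sketch is a faithful reconstruction of exactly those standard arguments, with (5) correctly identified as the only genuinely deep citation.
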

As a consequence, Irie proved the following results in \cite{Irie2018}.
\begin{prop}[Lemma 3.7 \cite{Irie2018}]\label{prop: lipschitz property}
Let $\gamma_k(\lambda):=\frac{c_{\varsigma_k}(Y,\lambda)}{\sqrt{2k}}-\sqrt{\mathrm{vol}(Y,\lambda)},\ \forall k\geq 1$. Then there exists a number $c(\lambda)>0$ such that
$$
|\gamma_k(e^{f_1}\lambda)-\gamma_k(e^{f_2}\lambda)|\leq c(\lambda)\|f_1-f_2\|_{C^0},\quad \forall f_1,f_2\in C^{\infty}(Y,[-1,1]),\quad \forall k\geq 1.
$$
\end{prop}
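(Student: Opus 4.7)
My plan is to bootstrap from properties (2) and (3) of Proposition~\ref{prop: properties of spectral invariant} to a multiplicative comparison of the spectral invariants under conformal rescaling, then to use property (5) to upgrade that comparison into an additive Lipschitz estimate with a constant independent of $k$, and finally to dispose of the volume term by an elementary computation.

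For the first step, set $M:=\|f_1-f_2\|_{C^0}$. Since $e^{f_2}\le e^{M}e^{f_1}$ pointwise, one may write $e^{M}e^{f_1}\lambda=h\cdot e^{f_2}\lambda$ with $h=e^{M-(f_2-f_1)}\ge 1$ on $Y$. Monotonicity (3) applied with base $e^{f_2}\lambda$ and multiplier $h$, followed by homogeneity (2), gives
\[
c_{\varsigma_k}(Y,e^{f_2}\lambda)\le c_{\varsigma_k}(Y,e^{M}e^{f_1}\lambda)=e^{M}c_{\varsigma_k}(Y,e^{f_1}\lambda),
\]
and exchanging $f_1\leftrightarrow f_2$ yields the symmetric inequality. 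Combining them,
\[
|c_{\varsigma_k}(Y,e^{f_1}\lambda)-c_{\varsigma_k}(Y,e^{f_2}\lambda)|\le (e^{M}-1)\max_{i=1,2}c_{\varsigma_k}(Y,e^{f_i}\lambda),
\]
and the restriction $f_i\in[-1,1]$ forces $M\le 2$, so $e^{M}-1\le e^{2}M$ has the correct Lipschitz shape provided that the right-hand maximum is controlled.

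For the second step I would bound $c_{\varsigma_k}(Y,e^{f_i}\lambda)/\sqrt{2k}$ uniformly in $k$ and in $f_i$. Applying the multiplicative comparison with $(f_1,f_2)=(f_i,0)$ gives $c_{\varsigma_k}(Y,e^{f_i}\lambda)\le e\,c_{\varsigma_k}(Y,\lambda)$, and property (5) forces $c_{\varsigma_k}(Y,\lambda)/\sqrt{2k}\to\sqrt{\mathrm{vol}(Y,\lambda)}$, so this sequence is bounded by some $C_1(\lambda)$ depending only on $\lambda$. Dividing the first-step estimate by $\sqrt{2k}$ produces a Lipschitz bound on $f\mapsto c_{\varsigma_k}(Y,e^{f}\lambda)/\sqrt{2k}$ with constant $e^{3}C_1(\lambda)$. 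For the volume term, the dimensional identity $\lambda\wedge df\wedge\lambda=0$ gives $\mathrm{vol}(Y,h\lambda)=\int_Y h^{2}\,\lambda\wedge d\lambda$, so $\mathrm{vol}(Y,e^{f}\lambda)=\int_Y e^{2f}\,\lambda\wedge d\lambda$; the mean value theorem applied to $\sqrt{\cdot}$ on the compact interval $[e^{-2}\mathrm{vol}(Y,\lambda),e^{2}\mathrm{vol}(Y,\lambda)]$, together with $|e^{2f_1}-e^{2f_2}|\le 2e^{2}\|f_1-f_2\|_{C^0}$, provides a second Lipschitz constant $C_2(\lambda)$, and the proposition follows with $c(\lambda)=e^{3}C_1(\lambda)+C_2(\lambda)$.

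The principal delicacy is precisely the $k$-uniformity of the constant. Properties (2) and (3) on their own furnish only a multiplicative comparison, which after division by $\sqrt{2k}$ leaves the prefactor $c_{\varsigma_k}(Y,\lambda)/\sqrt{2k}$; without additional input this prefactor could in principle grow with $k$ and obstruct any uniform Lipschitz bound. It is precisely the Weyl-type volume law (5) that guarantees this prefactor converges, and hence is uniformly bounded, and this is the only place where a global structural input of ECH beyond elementary monotonicity and homogeneity enters the argument.
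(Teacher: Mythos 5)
Your proof is correct, and it follows the standard route one would expect (and, as far as I can tell, the same route as Irie's Lemma 3.7, to which the paper simply refers without reproducing a proof): use homogeneity (2) and monotonicity (3) of $c_{\varsigma_k}$ to get the two-sided multiplicative comparison $c_{\varsigma_k}(Y,e^{f_2}\lambda)\le e^{M}c_{\varsigma_k}(Y,e^{f_1}\lambda)$ and its mirror, convert this to an additive bound with prefactor $\max_i c_{\varsigma_k}(Y,e^{f_i}\lambda)$, and then — this is the essential point, which you correctly flag — invoke the Weyl-type asymptotic (5) to show that $c_{\varsigma_k}(Y,\lambda)/\sqrt{2k}$ is bounded uniformly in $k$, so the prefactor stays controlled after dividing by $\sqrt{2k}$; the volume term is handled by the elementary identity $\mathrm{vol}(Y,e^{f}\lambda)=\int_Y e^{2f}\lambda\wedge d\lambda$ together with Lipschitz continuity of $\sqrt{\cdot}$ on a compact interval bounded away from $0$. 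All the individual estimates ($e^{M}-1\le e^{2}M$ for $M\le 2$, $c_{\varsigma_k}(Y,e^{f_i}\lambda)\le e\,c_{\varsigma_k}(Y,\lambda)$, $|e^{2f_1}-e^{2f_2}|\le 2e^{2}\|f_1-f_2\|_{C^0}$) are checked correctly, and the final constant $c(\lambda)=e^{3}C_1(\lambda)+C_2(\lambda)$ depends on $\lambda$ alone, as required.
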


\begin{prop}[Lemma 3.4 \cite{Irie2018}]\label{prop: regularity of spectral invariant}
Let $\{\lambda_{\tau}:\tau\in[0,1]^N\}$ denote a smooth $[0,1]^N$-family of contact forms in $(Y,\xi)$ and $\varsigma\in\mathrm{ECH}(Y,\xi,\Gamma)\setminus\{0\}$. If $\lambda_{\tau_0}$ is nondegenerate, then
\begin{itemize}
\item[(1)] There exists a $C\in \mathcal L_{\mathbb{Z}_+}(Y,\lambda_{\tau_0})$ such that $C(\lambda_{\tau_0})=c_\varsigma(Y,\lambda_{\tau_0})$.
\item[(2)] Assume $\tau\mapsto c_\varsigma(Y,\lambda_\tau)$ is differentiable at $\tau_0$, then
$$C\big(\partial_{\tau_i} \lambda_\tau\big|_{\tau_0}\big)=\frac{\partial c_{\varsigma}(Y,\lambda_\tau)}{\partial \tau_i}\bigg|_{\tau_0},\quad \tau:=(\tau_1,\cdots,\tau_N).$$
\end{itemize}
\end{prop}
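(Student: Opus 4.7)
The plan is to handle the two parts separately, with the nondegeneracy of $\lambda_{\tau_0}$ providing the crucial input in both. For part (1), I would observe that nondegeneracy forces the action spectrum $\mathcal{A}(L_{\mathbb{Z}_+}(Y,\lambda_{\tau_0}))$ to be a closed discrete subset of $\mathbb{R}_{\geq 0}$, so the infimum in (\ref{equ: spectral invariant}) is attained by some cycle $z$ in $\mathrm{ECC}^a_*(Y,\lambda_{\tau_0},\Gamma)$ representing $\varsigma$ with $a=c_\varsigma(Y,\lambda_{\tau_0})$. Writing $z=\sum_j \alpha_j$ as a $\mathbb{Z}_2$-sum of ECH generators, at least one summand $C:=\alpha_{j_0}$ must satisfy $\mathcal{A}(C)=a$: otherwise every $\alpha_j$ would have action strictly less than $a$, placing $z$ in a strictly smaller filtration level and contradicting the definition of $c_\varsigma$.

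For part (2), the strategy is to construct an explicit smooth upper bound for $c_\varsigma(Y,\lambda_\tau)$ near $\tau_0$ and combine it with the differentiability hypothesis. Using nondegeneracy and the implicit function theorem, each simple Reeb orbit underlying $C=\sum_i m_i\gamma_i$ deforms to a smooth family $\gamma_{i,\tau}$ of Reeb orbits of $\lambda_\tau$, producing an orbit set $C_\tau=\sum_i m_i\gamma_{i,\tau}$ with $C_{\tau_0}=C$. A direct Cartan-formula computation, using $\iota_{R_{\lambda_{\tau_0}}}d\lambda_{\tau_0}=0$ to kill the boundary term coming from the moving loop, yields the first variation formula
\begin{equation*}
\frac{\partial}{\partial\tau_i}\bigg|_{\tau=\tau_0}\mathcal{A}(C_\tau;\lambda_\tau)=C\!\left(\frac{\partial\lambda_\tau}{\partial\tau_i}\bigg|_{\tau_0}\right).
\end{equation*}
Invoking stability of the nondegenerate ECH chain complex under small perturbations of $\lambda$ (via cobordism/continuation arguments, or equivalently the Taubes identification with Seiberg--Witten Floer homology which is manifestly invariant), the representing cycle $z$ deforms to a cycle $z_\tau$ representing $\varsigma$ whose maximal-action generator is $C_\tau$. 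This produces the smooth upper bound $c_\varsigma(Y,\lambda_\tau)\leq L(\tau):=\mathcal{A}(C_\tau;\lambda_\tau)$ with equality at $\tau_0$. The assumed differentiability of $\tau\mapsto c_\varsigma(Y,\lambda_\tau)$ at $\tau_0$, combined with $c_\varsigma\leq L$ and $c_\varsigma(\tau_0)=L(\tau_0)$, then forces $\nabla c_\varsigma(\tau_0)=\nabla L(\tau_0)$: testing the inequality along $\pm v$ pins down each directional derivative. Substituting the first variation formula gives the stated identity.

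The main obstacle is the stability claim used above, namely that a representing cycle of $\varsigma$ at $\lambda_{\tau_0}$ really does deform to a cycle representing $\varsigma$ at $\lambda_\tau$ with maximal-action generator $C_\tau$. At the level of generators this is immediate from the IFT, but the filtered chain complex itself depends on $\lambda$, and one must rule out the differential creating new obstructions, or some low-action generator of the cycle ``jumping" past $C_\tau$ in the filtration as $\tau$ varies. This is where cobordism invariance of ECH (or equivalently Taubes' isomorphism with Seiberg--Witten Floer homology) is essential, together with the observation that the finitely many generators of action near $a$ and the entries of the ECH differential between them vary continuously in $\tau$. Once this stability is in hand, all the remaining content of Proposition \ref{prop: regularity of spectral invariant} reduces to the first variation formula and the elementary convex sandwich argument.
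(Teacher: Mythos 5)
The paper does not give its own proof of this proposition; it is cited verbatim from Irie (\cite{Irie2018}, Lemma 3.4). So the comparison is against the known proof strategy for that lemma.

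Your argument for part (1) is correct: nondegeneracy of $\lambda_{\tau_0}$ makes the action spectrum closed and discrete, so the infimum in (\ref{equ: spectral invariant}) is attained, and any representing cycle at the attained level must contain a generator of exactly that action.

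Your argument for part (2) has a genuine gap, which you partly recognize. The crux of your argument is the ``smooth upper bound'' $c_\varsigma(Y,\lambda_\tau)\leq L(\tau):=\mathcal{A}(C_\tau;\lambda_\tau)$ obtained by deforming the representing cycle $z$ to a cycle $z_\tau$ for $\lambda_\tau$ still representing $\varsigma$ with maximal-action generator $C_\tau$. This does not follow from cobordism invariance or the Taubes identification. The ECH differential counts $I=1$ embedded curves and the counts jump at wall-crossing parameter values, so $z_\tau$ need not be a cycle; cobordism maps require an ordering $\lambda_1\geq\lambda_0$ and only furnish one-sided comparisons of the filtered homologies, not a deformation of a specific chain; and Taubes' isomorphism is a statement about homology, not about the filtered complex as $\lambda$ moves. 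Moreover the inequality $c_\varsigma\leq L$ is simply not true in general: if $z$ has two maximal-action generators $C,C'$ at $\tau_0$, and for $\tau$ on one side $c_\varsigma(\lambda_\tau)$ is realized by $C'_\tau$ with $\mathcal{A}_{\lambda_\tau}(C'_\tau)>\mathcal{A}_{\lambda_\tau}(C_\tau)$, then $c_\varsigma(\lambda_\tau)>L(\tau)$ there and the convex sandwich collapses.

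The correct route (and Irie's) does not perturb the cycle at all. Use nondegeneracy and the Lipschitz estimate of Proposition \ref{prop: lipschitz property} to see that for $\tau$ near $\tau_0$ the value $c_\varsigma(Y,\lambda_\tau)$ lies in the (locally finite) set $\{\mathcal{A}_{\lambda_\tau}(C^{(k)}_\tau)\}_{k=1}^m$, where $C^{(1)},\ldots,C^{(m)}$ are the finitely many orbit sets of $\lambda_{\tau_0}$ with action exactly $c_\varsigma(Y,\lambda_{\tau_0})$ and $C^{(k)}_\tau$ their continuations. Each $L_k(\tau):=\mathcal{A}_{\lambda_\tau}(C^{(k)}_\tau)$ is smooth near $\tau_0$, and your first-variation computation (which is the genuinely correct and useful ingredient in your proposal) gives $\partial_{\tau_i}L_k(\tau_0)=C^{(k)}(\partial_{\tau_i}\lambda_\tau|_{\tau_0})$. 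Finally, a function that is a pointwise selection from finitely many $C^1$ functions agreeing at $\tau_0$, and is differentiable at $\tau_0$, must have gradient equal to one of theirs: for each unit direction $v$, pigeonhole along $\tau_0+s_nv$ gives $\partial_vc_\varsigma(\tau_0)=\partial_vL_{k(v)}(\tau_0)$, and since finitely many great subspheres cannot cover $S^{N-1}$, there is a single $k$ with $\nabla c_\varsigma(\tau_0)=\nabla L_k(\tau_0)$. That $C^{(k)}$ satisfies both (1) and (2). Thus the missing idea in your proposal is this finite-selection argument, not a stability-of-cycles statement.
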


\subsection{Finsler and Riemannian settings}\label{sub: Finsler and Rie setting}
Let $(\Sigma,F)$ be a closed Finsler manifold, where $F$ is the Finsler metric that satisfies
\begin{itemize}
\item[(I)] $F:T\Sigma\setminus \Sigma \rightarrow \mathbb{R}_+$ is a smooth function and $F|_{\Sigma}=0$.
\item[(II)] $F(q,av)=aF(x,v)$ for every $(q,v)\in T\Sigma\setminus\Sigma$ and $a>0$.
\item[(III)] $\nabla^2_v F^2(q,v)$ is positive definite for every $(q,v)\in T\Sigma\setminus\Sigma$.
\end{itemize}
We call $F$ symmetric (or reversible) if
\begin{itemize}
\item[(II')] $F(q,av)=|a|F(q,v)$ for every $(q,v)\in T\Sigma\setminus\Sigma$ and $a\neq0$.
\end{itemize}
Let $\mathcal{F}(\Sigma)$ and $\mathcal{F}^s(\Sigma)$ denote the space of all Finsler and symmetric Finsler metrics on $\Sigma$, respectively. By fixing an arbitrary $F\in\mathcal{F}(\Sigma)$, one can rewrite
$$\mathcal{F}(\Sigma)=\bigg\{fF\in C^\infty(S_F\Sigma,\mathbb{R}_{+}): f=F_1/F\in C^\infty(S_F\Sigma,\mathbb{R}_{+}),\ \forall F_1\in\mathcal{F}(\Sigma)\bigg\},$$ where $S_F\Sigma=\{(q,v)\in T\Sigma: F(v)=1\}$ is the unit tangent bundle over $(\Sigma,F)$ and $f(q,av)=f(q,v)$ for every $(q,v)\in S_F\Sigma$ and $a>0$. Similarly, by fixing an arbitrary $F\in\mathcal{F}^s(\Sigma)$, $\mathcal{F}^s(\Sigma)$ can be expressed as the set of $f=F_1/F\in C^\infty(S_F\Sigma,\mathbb{R}_+)$ with $f(q,av)=f(q,v)$ for every $F_1\in \mathcal{F}^s(\Sigma)$, $(q,v)\in S_F\Sigma$ and $a\in \mathbb R_*:=\mathbb{R}\setminus\{0\}$. The $C^\infty$-topology of $\mathcal{F}(\Sigma)$ and $\mathcal{F}^s(\Sigma)$ is induced by
\begin{equation}\label{C infty norm}
\|f_1-f_2\|_{C^\infty}:=\sum^{\infty}_{k=0}\frac{\|\nabla^k (f_1-f_2)\|_{C^0}}{1+\|\nabla^k(f_1-f_2)\|_{C^0}},\quad \forall f_1,f_2\in C^\infty(S_F\Sigma,\mathbb{R}).
\end{equation}
Since $F^2$ is strictly convex on $T\Sigma$, by Legendre transformation, there exists a $1$-to-$1$ correspondence
\begin{equation}\label{equ: lagrangian transformation}
\mathcal{L}_F: T\Sigma\rightarrow T^*\Sigma\quad \left(v= v_1 \partial_{q_1}+v_2 \partial_{q_2}\mapsto F(v)\big(\partial_{v_1} F(v)dq_1+\partial_{v_2} F(v)dq_2\big)\right),
\end{equation}
which is independent of the choice of local coordinates $(q_1,q_2)$. Since $\mathcal{L}_F$ satisfies $\mathcal{L}_F(av)=a\mathcal{L}_F(v)$ for every $a>0$ (or for every $a\in\mathbb{R}_*$ if $F$ is symmetric), as discussed in (\ref{equ: homogenuity of LF}), we can introduce a dual metric $F^*(p)=F(\mathcal{L}_F^{-1}p)$ on $T^*\Sigma$, which also satisfies conditions (I)$-$(III) and (II'). Let $S^*_F\Sigma=\{(q,p)\in T^*\Sigma: F^*(p)=1\}$ denote the unit cotangent bundle and let ${\lambda_F}|_e:=e\circ \pi_*,\forall e\in S^*_F\Sigma$ be the canonical Liouville form, where $\pi_*$ is differential of projection $\pi:T^*\Sigma\rightarrow \Sigma$. Condition (III) implies that $S^*_F\Sigma$ admits convex fibers or symmetric convex fibers if $F\in\mathcal{F}^s(\Sigma)$. The pair $(S^*_F\Sigma,\xi_F=\ker \lambda_F)$ constitutes a contact manifold and satisfies the isomorphism $(S^*_F\Sigma,(F_1^*)^{-1}\lambda_F)\cong (S^*_{F_1}\Sigma,\lambda_{F_1})$ for every $F_1\in\mathcal{F}(\Sigma)$, as discussed in section \ref{sub: geodesic and Reeb flows}. Let $\Lambda_F:=\{f\lambda_F: f\in C^\infty(S^*_{F}\Sigma,\mathbb R_*)\}$ denote the collection of all contact forms on $(S^*_F\Sigma,\xi_F)$. Define
\begin{equation}\label{equ: lambda Fin}
\Lambda^{Fin}_F:=\left\{\mathrm{contact\ forms}\ \lambda_{F_1}=(F_1^*)^{-1}\lambda_F:\forall F_1\in\mathcal{F}(\Sigma)\right\}\subset \Lambda_F.
\end{equation}
Similarly, we have $\Lambda^{sFin}_F$ for every $F_1\in\mathcal{F}^s(\Sigma)$ by fixing an $F\in\mathcal{F}^s(\Sigma)$ in advance. The $C^\infty$-topology of $\Lambda^{Fin}_F$ (or $\Lambda^{sFin}_F$) is induced by
$$\big\|\lambda_{F_1}-\lambda_{F_2}\big\|:=\big\|F^{-1}F_1-F^{-1}F_2\big\|_{C^\infty},\quad \forall F_1,F_2\in\mathcal{F}(\Sigma)\ (\text{or}\ \mathcal{F}^s(\Sigma)).$$
Then $\Lambda^{Fin}_F$ and $\mathcal{F}(\Sigma)$ (as well as $\Lambda^{sFin}_F$ and $\mathcal{F}^s(\Sigma)$) are homeomorphic to each other. Moreover, we observe from \eqref{equ: lambda Fin} that $\mathcal F(\Sigma)$ associates to an open subset of $\Lambda_F$. In another word, every perturbed contact form $f\lambda_F$ on $(S^*_F\Sigma,\xi_F)$ with $\|f-1\|_{C^\infty}$ sufficiently small leads to a perturbed Finsler metric $F_1$ satisfying $f^{-1}=F_1^*|_{S^*_F\Sigma}$ due to the fiber convexity of $F_1^*$.

As a special case, we let $\mathcal{M}(\Sigma)\subset \mathcal{F}^s(\Sigma)\ (g\hookrightarrow F_g:=\|\cdot\|_g)$ denote the space of Riemannian metrics on $\Sigma$. Then by fixing $g\in \mathcal{M}(\Sigma)$, one can rephrase
$$\mathcal{M}(\Sigma):=\left\{f_{g_1}^2g\in C^\infty(S_g\Sigma,\mathbb{R}_+): f_{g_1}(\cdot)=\|\cdot\|_g^{-1}\|\cdot\|_{g_1}\in C^\infty(S_g\Sigma,\mathbb{R}_+),\ \forall g_1\in\mathcal{M}(\Sigma)\right\},$$
where $S_g\Sigma:=S_{F_g}\Sigma$. The $C^\infty$-topology of $\mathcal{M}(\Sigma)$ is also induced by the norm defined in (\ref{C infty norm}). Based on the transformation (\ref{equ: lagrangian transformation}), we can deduce that $\mathcal{L}_{g}(v)=(v,\cdot)_g$, where $\mathcal{L}_g:=\mathcal{L}_{F_g}$. The dual metric $g^*$ on $T^*\Sigma$ is then defined as $\|\cdot\|_{g^*}:=F_g^*$, where the relation $\|\mathcal{L}_g(v)\|_{g^*}=\|v\|_g$ holds. Consequently, we obtain a contact manifold $(S^*_g\Sigma,\xi_g):=(S^*_{F_g}\Sigma,\xi_{F_g})$, where $\xi_g$ denotes the kernel of the Liouville form $\lambda_g:=\lambda_{F_g}$ on the $g^*$-unit cotangent bundle $S^*_g\Sigma$. Similar to $\Lambda^{Fin}_F$, we define
\begin{equation}\label{equ: lambda Rie}
\Lambda^{Rie}_g:=\left\{\mathrm{contact\ forms}\ \lambda_{g_1}=\|\cdot\|^{-1}_{g_1^*}\lambda_g,\ \forall g_1\in\mathcal{M}(\Sigma)\right\}.
\end{equation}
The $C^\infty$-topology of $\Lambda^{Rie}_g$ is induced by the norm $\|\lambda_{g_1}-\lambda_{g_2}\|:=\|f_{g_1}-f_{g_2}\|_{C^\infty},\forall g_1,g_2\in\mathcal{M}(\Sigma),$
where $f_{g_i}=\|\cdot\|_g^{-1}\|\cdot\|_{g_i}, i=1,2$. Therefore, $\Lambda^{Rie}_g$ and $\mathcal{M}(\Sigma)$ are homeomorphic to each other.

Let $\mathrm{CG}^{F}$ denote the set of all closed geodesics on $(\Sigma,F)$ with parametrization, and let $\mathrm{CG}^{g}:=\mathrm{CG}^{F_g}$. For each $\sigma\in\mathrm{CG}^{F}$, we define $\tilde\sigma=(\sigma,\dot\sigma)$ and $\gamma_\sigma=\mathcal{L}_F(\tilde\sigma)$ as the corresponding curve on $S_F\Sigma$ and $S^*_F\Sigma$, respectively. A Finsler $1$-current on $(\Sigma,F)$ is a formal linear combination $a_1\sigma_1+\cdots+a_k\sigma_k$, where $a_i>0$ and $\sigma_i\in \mathrm{CG}^{F}$ for every $i=1,\cdots,k$. Similarly, we have Riemannian $1$-currents on $(\Sigma,g)$. We denote $\CL^{F}$ and $\CL^g$ the set of all Finsler and Riemannian $1$-currents on $(\Sigma,F)$ and $(\Sigma,g)$, respectively. From another perspective, the Finsler and Reeb $1$-currents are linear functions on the spaces of differential $1$-forms, i.e.
\begin{equation}\label{equ: current acting on forms}
\sum_{i=1}^k a_i\sigma_i(\alpha):=\sum_{i=1}^k a_i\int_{\tilde \sigma_i}\alpha,\quad \sum_{i=1}^k a_i\gamma_{\sigma_i}(\beta):=\sum_{i=1}^k a_i\int_{\gamma_{\sigma_i}}\beta,\quad \forall \alpha\in \Omega^1(S_F\Sigma),\ \beta\in \Omega^1(S^*_F\Sigma).
\end{equation}

\subsection{Geodesic flows and Reeb flows}\label{sub: geodesic and Reeb flows}
Consider a Finsler surface $(\Sigma,F)$ with $F$ satisfying (I)$-$(III). Denote $F_q(v):=F(q,v)$. Since $F^2_q$ is a convex function on $T^*_q\Sigma$ for every $q\in \Sigma$, the Legendre transformation $\mathcal{L}_F$ defined in (\ref{equ: lagrangian transformation}) is a one-to-one correspondence from $T_q\Sigma$ to $T^*_q\Sigma$. Choose $q=(q_1,q_2)$ as the local coordinates of $\Sigma$. By condition (II), we have $aF_q(v)=F_q(av)$ for every $a>0$ and $v=\sum_{i=1}^2v_i\partial_{q_i}\in T_q\Sigma$. By differentiating $v_i$ and $a$ on both sides, we obtain that
\begin{equation}\label{equ: homogenuity of F}
\partial_{v_i} F_q(av)=\partial_{v_i} F_q(v),\ i=1,2,\quad  \text{and}\quad  F_q(v)=d(F_q)|_{av}(v),\quad \forall a>0,
\end{equation}
then we have
\begin{equation}\label{equ: homogenuity of LF}
\mathcal{L}_F(av)=F_q(av)\sum_{i=1}^2\partial_{v_i} F_q(av)dq_i=a\mathcal{L}_F(v),\quad \forall a>0.
\end{equation}
If $F\in\mathcal{F}^s(\Sigma)$, then (\ref{equ: homogenuity of LF}) holds for every $a\neq 0$. Recall that $(S^*_F\Sigma,\lambda_F)$ constitutes a contact manifold. Under the local coordinates $(q,p):=pdq$ of $T^*\Sigma$, the contact form $\lambda_F$ can be locally written as the restriction of the $1$-form $\lambda_0=pdq$ on $S^*_F\Sigma$.

To measure the length of closed geodesics, we introduce the Hilbert form as
\begin{equation}\label{equ: def of Hilbert form}
\beta=\partial_{v_1} Fdq_1+\partial_{v_2} Fdq_2,\quad \beta_F=\beta|_{S_F\Sigma},
\end{equation}
which is globally well-defined and independent of the choice of local coordinates $(q_1,q_2)$. According to \cite{DGZ2017}, the pair $(S_F\Sigma,\beta_F)$ constitutes a contact manifold and the Reeb flow of $\beta_F$ coincide with the geodesic flow on $S^*_F\Sigma$. For every $\sigma\in \mathrm{CG}^F$ with $\tilde\sigma=(\sigma,\dot\sigma)$ in $T\Sigma$, we have
\begin{equation}\label{equ: length function}
l_F(\sigma)=\int_0^{l_F(\sigma)} F_{\sigma(t)}(\dot\sigma(t))dt\overset{\eqref{equ: homogenuity of F}}{=}\int_0^{l_F(\sigma)}d(F_{\sigma(t)})|_{\dot\sigma(t)}(\dot\sigma(t))dt=\int_{\tilde\sigma}\beta_F.
\end{equation}

To measure the volume of $(\Sigma,F)$, we define the volume form of $(\Sigma,F)$ as
\begin{equation}\label{equ: volume form}
d\mathrm{vol}_F:=-\frac{1}{\pi}\left(\int_{D^*_{F,q}\Sigma}dp_1\wedge dp_2\right)dq_1\wedge dq_2,
\end{equation}
where $D^*_{F,q}\Sigma$ is the unit disk bounded by the fiber $S^*_{F,q}\Sigma\subset S^*_F\Sigma$ over $q$. The minus sign is needed to adapt the orientation of $S^*_F\Sigma$ induced by $\lambda_F\wedge d\lambda_F$. Note that $d\mathrm{vol}_F$ is globally well-defined and the sign corrects the orientation. Moreover, we define $d\mathrm{vol}_F(f):=\int_\Sigma f(q) d\mathrm{vol}_F,\forall f\in C^\infty(\Sigma,\mathbb{R})$. In particular, the volume of $\Sigma$ is $\mathrm{vol}(\Sigma,F)=d\mathrm{vol}_F(1)$.

If $F=F_g$ satisfies $F_g^2=g\in\mathcal M(\Sigma)$, we recall that $\|\cdot\|_{g^*}=\|\mathcal{L}_g^{-1}(\cdot)\|_g$, where $\mathcal{L}_g(v)=\mathcal{L}_{F_g}(v)=(v,\cdot)_g,\forall v\in T_q\Sigma$. Under the local coordinates $(q_1,q_2)$, we have $p:=\mathcal{L}_{g}(v)=p_1dq_1+p_2dq_2$, where $p_i=g_{i1}v_1+g_{i2}v_2$ and  $g_{ij}=(\partial_{q_i},\partial_{q_j} )_g$. The Hilbert form becomes $\beta_g|_{(q,v)}:=\beta_{F_g}|_{(q,v)}=(v,\cdot)_g$ for every $(q,v)\in S_g\Sigma$. Let $G=(g_{ij})_{2\times 2}$, $G^*=(g_{ij}^*)_{2\times 2},\ \bar v=(v_1,v_2)^T,\ \bar p=(p_1,p_2)^T=G\bar v$. We have
$$(G\bar v,\bar v)=(v,v)_g=(p,p)_{g^*}=(G^*\bar p,\bar p)=(G^*G\bar v,G\bar v),$$ then $G^*=G^{-1}$. Since
\begin{equation}\label{equ: area of fiber}
\int_{D^*_{g,q}\Sigma}dp_1\wedge dp_2=\mathrm{Area}\{\bar p\in\mathbb{R}^2|\ (G^*\bar p,\bar p)=1\}=\pi\det(G)^{\frac{1}{2}},
\end{equation}
then $d\mathrm{vol}_{F_g}=-\det(G)^{\frac{1}{2}}dq_1\wedge dq_2$ coincides with the canonical volume form up to a sign. For the convenience to the proofs in later sections, we list the following useful relations between the geodesic flows and Reeb flows.

\begin{prop}\label{prop: properties of Finsler geodesic and Reeb flows}
For any $F,F_1\in\mathcal{F}(\Sigma)$, we have
\begin{itemize}
\item[(1)] $(S^*_{F}\Sigma,(F_1^*)^{-1}\lambda_F)\cong(S^*_{F_1}\Sigma, \lambda_{F_1})\ \left((q,p)\overset{\phi}{\mapsto} (q,{F^*_1(p)}^{-1}p)\right)$.
\item[(2)] For any $f\in C^\infty(\Sigma,\mathbb{R}_+)$, we have
$$ (S^*_F\Sigma, (f\circ \pi)\lambda_F)\cong(S^*_{fF}\Sigma,\lambda_{fF})\ \left((q,p)\overset{\phi}{\mapsto} (q,f(q)p)\right).$$
\item[(3)] The Hamiltonian flow $\varphi^t_{F^*}$ of $F^*$ coincides with the Reeb flow $\varphi^{t}_{\lambda_F}$ on $S^*_F\Sigma$ and conjugates to the geodesic flow $\varphi_F^t$ on $S_F\Sigma$ via $\mathcal{L}_F$.
\item[(4)] The projection $\mathcal P(S^*_F\Sigma,\lambda_F)\rightarrow \mathrm{CG}^F\ (\gamma_\sigma\overset{\pi}{\mapsto} \sigma)$ is a one-to-one correspondence.
\item[(5)] $\gamma_\sigma(f\lambda_F)=\sigma(f\beta_F)$ for every $\gamma\in \mathrm{CG}^F$ and $f\in C^\infty(\Sigma,\mathbb{R})$.
\item[(6)] The period $T$ of $\gamma_\sigma\in \mathcal P(S^*_F\Sigma,\lambda_F)$ coincides with the length $l_F(\sigma)$.
\item[(7)] $\int_{S^*_F\Sigma} f\lambda_F\wedge d\lambda_F=2\pi d\mathrm{vol}_F(f)$ for every $f\in C^\infty(\Sigma,\mathbb{R})$.
\item[(8)] The volume satisfies $\mathrm{vol}(S^*_F\Sigma,\lambda_F)=2\pi\mathrm{vol}_F\Sigma$.
\end{itemize}
\end{prop}
\begin{proof}
To show (1)-(2). Since $\lambda_{F_1}=pdq|_{F_1^*(p)=1}$, (1) directly follows from the follows
$$\phi^*\lambda_{F_1}=F_1^*(p)^{-1}pdq|_{F^*(p)=1}=F_1^*(p)^{-1}\lambda_{F}.$$
Similarly, we have $\phi^*\lambda_{fF}=f\circ \lambda_F$. Notice that $(fF)^*=F^*/f$.

To show (3)-(4). Let $X_{F^*}$ be the Hamiltonian vector field of $F^*$ on $T^*\Sigma$. Denote $F^*_q$ the restriction of $F^*$ on $T^*_{q}\Sigma$. Since $F^*_q(ap)=aF^*_q(p),\forall a>0$, we have $d(F^*_q)|_{ap}(p\partial_p)=F^*_q(p)=1$ for every $a>0$ and $p\in S^*_{F,q}\Sigma$. Then
$\lambda_F(X_{F^*})=\omega_0(p\partial_p,X_{F^*})=d(F^*_p)|_p(p\partial_p)=1$ and $\omega_0(\cdot, X_{F^*})=dF^*=0$ on $S^*_F\Sigma$, which means that $X_{F^*}$ is the Reeb vector field of $\lambda_F$. Moreover, using (\ref{equ: lagrangian transformation}), we compute
$$
\mathcal{L}_F^*\lambda_F=F(v)\sum_{i=1}^2\partial_{v_i} F(v)dq_i=\sum_{i=1}^2\partial_{v_i} F(v)dq_i=\beta_F,\quad \forall (q,v)\in S_F\Sigma.
$$
Then $d\mathcal{L}_F$ sends the Reeb vector field $\beta_F$ to the Reeb vector field of $\lambda_F$. Since the Reeb flow of $\beta_F$ coincides with the geodesic flow on $S_F\Sigma$, see \cite{DGZ2017}, then (3) follows. (4) follows from (3), directly.

To show (5)-(6). Since $\gamma_\sigma=\mathcal{L}_F(\tilde \sigma)$, by definition (\ref{equ: current acting on forms}), we have
\begin{eqnarray*}
\gamma_\sigma(f\lambda_F)=\int_{\gamma_\sigma}f\lambda_F=\int_{\tilde\sigma} (f\circ \mathcal{L}_F)\mathcal{L}_F^*\lambda_F=\int_{\tilde\sigma} f\beta_F=\sigma(f\beta_F).
\end{eqnarray*}
When $f\equiv 1$, we have (6), i.e. $T=l_F(\sigma)$.

To show (7)-(8). Using definition (\ref{equ: volume form}), we compute
\begin{equation*}
\begin{aligned}
\int_{S^*_F\Sigma} f\lambda_F\wedge d\lambda_F&=\int_{\Sigma} f(q)\left(\int_{S^*_{F,q}\Sigma}p_2dp_1-p_1dp_2\right)dq_1\wedge dq_2\\
&=2\int_{\Sigma} f(q)\left(-\int_{D^*_{F,q}\Sigma}dp_1\wedge dp_2\right)dq_1\wedge dq_2=2\pi\int_\Sigma f(q)d\mathrm{vol}_F\\
&=2\pi d\mathrm{vol}_F(f).
\end{aligned}
\end{equation*}
(8) is the conclusion of (7), when $f\equiv 1$.
\end{proof}
As a consequence, we can obtain a similar result for Riemannian case.
\begin{prop}\label{prop: properties of Riemannian geodesic and Reeb flows}
For any $g,g_1\in\mathcal{M}(\Sigma)$, we have
\begin{itemize}
\item[(1)] $(S^*_{g}\Sigma,\|p\|^{-1}_{g_1^*}\lambda_g)\cong(S^*_{g_1}\Sigma, \lambda_{g_1})\ \left((q,p)\mapsto (q,\|p\|^{-1}_{g_1^*}p)\right)$.
\item[(2)] For any $f\in C^\infty(\Sigma,\mathbb{R}_+)$, we have
$$ (S^*_g\Sigma, (f\circ \pi)\lambda_g)\cong(S^*_{f^2g}\Sigma,\lambda_{f^2g})\ \big((q,p)\mapsto (q,f(q)p)\big).$$
\item[(3)] The Hamiltonian flow $\varphi^t_{g^*}$ of $\|\cdot\|_{g^*}$ coincides with the Reeb flow $\varphi^{t}_{\lambda_g}$ and conjugate to the geodesic flow $\varphi_g^t$ on $S_g\Sigma$ via $\mathcal{L}_g$.
\item[(4)] The projection $\mathcal P(S^*_g\Sigma,\lambda_g)\rightarrow \mathrm{CG}^g\ (\gamma_\sigma\overset{\pi}{\mapsto} \sigma)$ is a one-to-one correspondence.
\item[(5)] $\gamma_\sigma(f\lambda_g)=\sigma(fds)$ for every $\sigma\in \mathrm{CG}^g$ and $f\in C^\infty(\Sigma,\mathbb{R})$.
\item[(6)] The period $T$ of $\gamma_\sigma\in \mathcal P(S^*_g\Sigma,\lambda_g)$ coincides with the length $l_g(\sigma)$.
\item[(7)] $\int_{S^*_g\Sigma} f\lambda_g\wedge d\lambda_g=2\pi d\mathrm{vol}_g(f)$ for every $f\in C^\infty(\Sigma,\mathbb{R})$.
\item[(8)] The volume $\mathrm{vol}(S^*_g\Sigma,\lambda_g)=2\pi\mathrm{vol}_g\Sigma$.
\end{itemize}
\end{prop}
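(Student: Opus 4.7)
The plan is to deduce Proposition \ref{prop: properties of Riemannian geodesic and Reeb flows} directly from Proposition \ref{prop: properties of Finsler geodesic and Reeb flows} by exploiting the inclusion $\mathcal{M}(\Sigma)\hookrightarrow \mathcal{F}^s(\Sigma)\subset \mathcal{F}(\Sigma)$ given by $g\mapsto F_g=\sqrt{g(\cdot,\cdot)}$. Under this inclusion $F_g^*(p)=\|p\|_{g^*}$ and $\mathcal{L}_g=\mathcal{L}_{F_g}$ by the discussion preceding (\ref{equ: area of fiber}), so seven of the eight items are immediate specializations of Proposition \ref{prop: properties of Finsler geodesic and Reeb flows}: items (1), (3), (4), (6) follow by setting $F=F_g$, $F_1=F_{g_1}$ and observing that the Reeb/geodesic/Hamiltonian flow statements and the projection $R(S^*_g\Sigma,\lambda(g))\to \mathrm{CG}(\Sigma,g)$ depend only on the Legendrian transformation and the contact form, both of which agree with the Finsler versions.

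The only identifications requiring a brief check are the three ``compatibilities" that take Finsler data to Riemannian data. First, for (2), one verifies $fF_g=F_{f^2 g}$, since $F_{f^2g}(v)=\sqrt{f^2g(v,v)}=fF_g(v)$; Proposition \ref{prop: properties of Finsler geodesic and Reeb flows}(2) applied to $F_g$ then gives (2) after relabeling. Second, for (5), Proposition \ref{prop: properties of Finsler geodesic and Reeb flows}(5) yields $\gamma_\sigma(f\lambda(g))=\sigma(f\beta(F_g))$, and it remains to recognize that $\beta(F_g)$ pulls back to the arc length form $ds(g)$: indeed $\beta(F_g)|_v(\cdot)=(v,\cdot)_g$, so along a unit speed geodesic $\beta(F_g)(\dot\sigma)=g(\dot\sigma,\dot\sigma)=1$ and $\int_{\tilde\sigma}f\beta(F_g)=\int_\sigma f\,ds(g)$. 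Third, for (7)–(8), one needs $d\mathrm{vol}_{F_g}=d\mathrm{vol}_g$, which is exactly the computation in (\ref{equ: area of fiber}) giving $\int_{D^*_{g,q}\Sigma}dp_1\,dp_2=\pi\det(G)^{1/2}$; plugging this into the definition (\ref{equ: volume form}) matches the canonical Riemannian volume form, and (7)–(8) then drop out of Proposition \ref{prop: properties of Finsler geodesic and Reeb flows}(7)–(8).

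There is no conceptual obstacle here; the proof is pure bookkeeping, since the convexity, smoothness and homogeneity conditions (I)–(III) for $F_g$ are automatic from the positive definiteness of $g$, and the symmetry (II') makes $F_g\in\mathcal{F}^s(\Sigma)$ so that all formulas extend to $a\neq 0$ via (\ref{equ: homogenuity of LF}). The ``hardest" step is the minor verification that the Hilbert form restricts to arc length and that the fiber-area formula yields the standard Riemannian volume, both of which are essentially one-line computations already displayed in the excerpt.
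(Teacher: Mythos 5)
Your proof is correct, and it takes a cleaner route than the paper. The paper does not actually invoke Proposition \ref{prop: properties of Finsler geodesic and Reeb flows} in its proof of the Riemannian version; instead it reproves each item from scratch in Riemannian notation (for example, it recomputes the Reeb vector field on the cotangent side for item (3) and appeals to [FK2018] for the conjugacy, and for item (5) it works directly with the arc length parameter on $\gamma_\sigma = (q(t),p(t))$ rather than via the Hilbert form). Your approach instead treats Proposition \ref{prop: properties of Finsler geodesic and Reeb flows} as a black box, applies it with $F = F_g$, $F_1 = F_{g_1}$, and then checks the dictionary: $F_g^* = \|\cdot\|_{g^*}$ and $\mathcal{L}_g = \mathcal{L}_{F_g}$ (both built into the definitions in Subsection \ref{sub: geodesic and Reeb flows}), $fF_g = F_{f^2g}$, $\int_{\tilde\sigma} f\beta(F_g) = \int_\sigma f\,ds(g)$ via $\beta(F_g)(\dot{\tilde\sigma}) = g(\dot\sigma,\dot\sigma) = 1$, and $d\mathrm{vol}_{F_g} = d\mathrm{vol}_g$, which is a definition in the paper confirmed by (\ref{equ: area of fiber}). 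This buys you brevity and avoids the redundancy of redoing Moser-type computations twice; what the paper's direct proof buys is a self-contained Riemannian derivation that does not require a reader to first internalize the Finsler formalism, and in item (3) it also supplies a concrete reference for the flow conjugacy. Both arguments establish the same statements, and your specialization is logically sound.
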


Therefore, the mapping $\mathcal{P}(S^*_F\Sigma,\lambda_F)\rightarrow \mathrm{CG}^F (\gamma_{\sigma}\mapsto \sigma)$ establishes a one-to-one correspondence. Moreover, the generalized mapping from $\CL(S^*_F\Sigma,\lambda_F)$ to $\CL^{F}$ also establishes a one-to-one correspondence. In particular, there are one-to-one correspondences from $\CL(S^*_F\Sigma,\lambda_F)$ to $\CL^F$ for every $F\in \mathcal F^s(\Sigma)$ and from $\CL(S^*_g\Sigma,\lambda_g)$ to $\CL^g$ for every $g\in \mathcal M(\Sigma)$, due to the relations $\mathcal{M}(\Sigma)\subset \mathcal{F}^s(\Sigma)\subset \mathcal{F}(\Sigma)$ and $\Lambda^{Rie}_g\subset \Lambda^{sFin}_{F_g}\subset \Lambda^{Fin}_{F_g}\subset \Lambda_F$.

On a contact manifold $(Y,\lambda)$, we call a Reeb orbit $\gamma\in \mathcal P(Y,\lambda)$ nondegenerate if $P_{\gamma}$ does not have eigenvalue $1$. The contact form $\lambda$ is nondegenerate if all the Reeb orbits are nondegenerate. By Proposition \ref{prop: properties of Finsler geodesic and Reeb flows}-(3), each closed geodesic $\sigma\in\mathrm{CG}^F$ associates to a Reeb orbit
\begin{equation}\label{equ: gamma of sigma}
\gamma_\sigma(t)=(\sigma(t),p(t))\in S^*_F\Sigma,\quad p(t)=\mathcal{L}_F(\dot\sigma(t)),\quad \forall t\in\mathbb{R}.
\end{equation}
We call the closed geodesic $\sigma$ nondegenerate if $\gamma_\sigma$ is nondegenerate. We call $F\in\mathcal{F}(\Sigma)$ bumpy if all closed geodesics are nondegenerate. A Finsler $1$-current $\sum_{i=1}^ka_i\sigma_i\in \mathcal{L}^F$ is considered nondegenerate if $\sigma_1,\cdots,\sigma_k$ are all nondegenerate. Analogous to the Reeb $1$-current in $\mathcal L(S^*_F\Sigma,\lambda_F)$. Therefore, $F$ is bumpy if and only if $\lambda_F$ is nondegenerate.

\subsection{Sard-Smale theorem}\label{sec: Sard-smale theorem}
In this section, we briefly introduce the Sard theorem and Sard-Smale theorem, which is mainly used in section~\ref{sec: generic nondegenercy of Rimannian metrics}.
A Fredholm operator is a continuous linear map $L:E_1\rightarrow E_2$ between Banach spaces, which satisfies $\dim\ker L<+\infty,\ \dim\mathrm{coker} L<\infty$ and admits a closed image. The Fredholm index of $L$ is defined as $\mathrm{Ind}L:=\dim\ker L-\dim\mathrm{coker}L\in\mathbb{Z}.$
A Fredholm map is a $C^1$-map $f:X\rightarrow Y$ between connected Banach manifolds such that
$Df(x): T_xX\rightarrow T_{f(x)}Y$ is Fredholm for every $x\in X$. The Fredholm index of $f$ is defined as $\mathrm{Ind}f:=\mathrm{Ind}Df(x)$ for some $x$, which is independent of $x$. A point $x\in X$ is called singular point of $f$ if $Df(x)$ is not surjective and $f(x)$ is called a singular value of $f$. $x\in X$ is called regular point if $x$ is not singular and $y\in Y$ is called a regular value of $f$ if $y$ is not a singular value. We firstly introduce the famous Sard theorem, see \cite{Sard1942}, \cite{Smale1965}.
\begin{lem}[Theorem 1.2 in \cite{Smale1965}]\label{lem: Appendix Sard theorem}
Let $M,N$ be $m,n$-dimensional manifold and $U\subset M$ is an open submanifold. If $f:U\rightarrow N$ is a $C^r$-map, where $r>\max\{0,m-n\}$, then the set of critical values of $f$ has measure zero in $N$.
\end{lem}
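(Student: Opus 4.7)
The plan is to prove this classical Sard theorem by the standard Morse--Sard induction, reducing to a local Euclidean problem and then filtering the critical set by order of vanishing of derivatives.

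First I would pass from the manifold setting to Euclidean space. Since $U$ is second countable, it admits a countable atlas of charts whose images lie inside coordinate charts of $N$, and any notion of ``measure zero'' on $N$ (defined, say, via a Riemannian volume form) is preserved under countable unions. Thus it suffices to prove the statement for a $C^r$ map $f:U\subset\mathbb{R}^m\to\mathbb{R}^n$. I would then induct on the source dimension $m$. The base case $m=0$ is trivial since $U$ is discrete.

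For the inductive step, denote by $C\subset U$ the set of critical points of $f$ and filter it as $C\supset C_1\supset C_2\supset\cdots$, where
\begin{equation*}
C_i=\{x\in U\mid\partial^\alpha f(x)=0\text{ for all multi-indices }|\alpha|\le i\}.
\end{equation*}
The argument splits into three parts.
\emph{(a)} For $x_0\in C\setminus C_1$, some first-order partial of some component of $f$ is nonzero. Using the implicit function theorem I would straighten a level hypersurface so that $f$ becomes, locally, a fibered map over $\mathbb{R}$; Fubini combined with the inductive hypothesis in dimension $m-1$ shows that $f(C\setminus C_1)$ is a null set.
\emph{(b)} For $x_0\in C_i\setminus C_{i+1}$ with $1\le i\le r-1$, some partial of order $i+1$ is nonzero, so some order-$i$ partial $g$ of a component of $f$ has nonvanishing gradient while vanishing on a neighborhood of $x_0$ in $C_i$. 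Straightening $\{g=0\}$ to a hyperplane again brings things into a lower-dimensional situation, and the inductive hypothesis applies.
\emph{(c)} Finally one shows $f(C_r)$ has measure zero by a direct covering estimate: Taylor's theorem gives $|f(x)-f(y)|\le M|x-y|^{r+1}$ whenever $y\in C_r$ and $x$ is close, so partitioning a compact subset of $C_r$ into $N^m$ cubes of side $1/N$ produces images contained in balls of radius $O(N^{-(r+1)})$, whose total $n$-volume is $O(N^{m-n(r+1)})$. This goes to $0$ as $N\to\infty$ precisely when $r+1>m/n$, i.e.\ $r>m/n-1$. Since the hypothesis $r>\max\{0,m-n\}$ and $r\ge 1$ imply $r\ge m-n+1\ge m/n$ (when $n\ge 1$), this threshold is met.

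The step I expect to be the main obstacle is (c), because it is where the differentiability threshold $r>\max\{0,m-n\}$ is actually used, and getting the counting of cubes to match that threshold requires the slightly sharper form of Taylor's estimate (the remainder must be controlled by $|x-y|^{r+1}$, using $C^r$ regularity in the Whitney sense on a compact neighborhood). Parts (a) and (b) are essentially bookkeeping around the implicit function theorem, but they must be organized so the straightening diffeomorphism carries the restricted map to a $C^r$ map between open sets of $\mathbb{R}^{m-1}$ and $\mathbb{R}^n$, so that the inductive hypothesis truly applies with the same regularity condition.
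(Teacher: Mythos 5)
The paper does not prove this lemma: it simply cites it as Theorem~1.2 of Smale's paper \cite{Smale1965}, so there is no ``paper proof'' to compare against. What you have written is a sketch of the classical Morse--Sard argument, which is a sensible thing to present, and parts (a) and (b) are organized correctly (the usual caveat being that the straightening diffeomorphism is only $C^r$, so the conjugated map really is only $C^r$, which is why the induction hypothesis must allow the same $r$).

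There is, however, a genuine error in part (c). You claim that for a $C^r$ map $f$ and $y\in C_r$ (all partials of order $\le r$ vanish at $y$), Taylor's theorem gives $|f(x)-f(y)|\le M|x-y|^{r+1}$, and you even flag this as ``$C^r$ regularity in the Whitney sense''. This estimate is false for $C^r$ functions. The one-variable example $f(x)=x^r/|\log x|$ (with $f(0)=0$) is $C^r$ with all derivatives up to order $r$ vanishing at $0$, yet $|f(x)|/|x|^{r+1}\to\infty$. The correct uniform statement obtainable from the Peano form of Taylor's remainder, using only continuity of the $r$-th derivatives, is $|f(x)-f(y)|\le \varepsilon(|x-y|)\,|x-y|^{r}$ with $\varepsilon(t)\to 0$ as $t\to 0$, uniformly on a compact set. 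The $|x-y|^{r+1}$ bound requires $f\in C^{r+1}$.

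The good news is that the $o(|x-y|^r)$ bound suffices, but it changes the threshold: partitioning a unit cube into $N^m$ subcubes of side $1/N$, the image of each subcube meeting $C_r$ lies in a ball of radius $\varepsilon(1/N)N^{-r}$, giving total $n$-volume $O\bigl(\varepsilon(1/N)^n\,N^{m-nr}\bigr)$. This tends to $0$ as soon as $nr\ge m$, i.e.\ $r\ge m/n$ (a \emph{non-strict} inequality, thanks to the $\varepsilon(1/N)^n\to 0$ factor). And $r\ge m/n$ is indeed implied by the hypotheses: if $m\le n$ then $m/n\le 1\le r$, and if $m>n$ then $r>m-n$ forces $r\ge m-n+1\ge m/n$. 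So your overall strategy is correct and can be repaired, but the specific Taylor estimate and the resulting $r+1>m/n$ threshold should be replaced by the $o(|x-y|^r)$ estimate and the threshold $r\ge m/n$.
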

The Sard theorem was generalized to infinite dimensional case by Smale in \cite{Smale1965} (c.f. \cite{Chang2005}).
\begin{lem}[Theorem 1.3.16 in \cite{Chang2005}]\label{lem: Appendix Sard-Smale theorem}
Suppose $X$ is a separable Banach space, $U\subset X$ is an open set and $Y$ is a Banach space. Let $f: U\rightarrow Y$ be a Fredholm $C^r$-map, where $r>\max\{0,\mathrm{Ind}f\}$, then the regular set of $f$ is residual.
\end{lem}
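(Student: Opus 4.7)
The strategy is to reduce this infinite-dimensional Sard theorem to the finite-dimensional Sard theorem (Lemma \ref{lem: Appendix Sard theorem}) by putting $f$ into a local normal form. Near each point, a Fredholm map should look, after a $C^r$ change of coordinates in both source and target, like a finite-dimensional map into its cokernel, and the singular values are detected by this finite-dimensional piece alone.

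First I would fix $x_0 \in U$ and exploit the Fredholm condition: $K := \ker Df(x_0)$ is finite-dimensional hence topologically complemented by a closed subspace $X_1 \subset X$, and $V := \mathrm{Im}\,Df(x_0)$ is closed with finite-dimensional complement $W \subset Y$. Since $Df(x_0)|_{X_1} : X_1 \to V$ is a linear isomorphism, the implicit function theorem (spending one derivative of regularity) produces $C^r$ local charts $\Phi, \Psi$ near $x_0$ and $f(x_0)$ such that $\Psi \circ f \circ \Phi^{-1}(u,v) = (u, g(u,v))$, with $u \in V$, $v \in K$, and $g$ taking values in the finite-dimensional space $W$. In these coordinates, $(u,v)$ is a singular point of $f$ precisely when the partial $\partial_v g(u,v) : K \to W$ fails to be surjective, and the singular values of $f$ project, via the splitting $Y = V \oplus W$, to singular values of the finite-dimensional $C^r$ map $g$.

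Next I would apply the classical Sard theorem to $g$: the target $W$ has dimension $\dim \mathrm{coker}\,Df(x_0)$, the fibre direction $K$ has dimension $\dim K$, and the regularity hypothesis $r > \max\{0, \dim K - \dim W\} = \max\{0, \mathrm{Ind}\,f\}$ is exactly what Lemma \ref{lem: Appendix Sard theorem} requires, so the singular values of $g$ form a set of measure zero in $W$, and in particular a meager set. Pulling this back through the open projection $V \oplus W \to W$ shows the singular values of $f$ are locally meager in $Y$. Finally, separability of $X$ lets one cover $U$ by countably many such chart domains; the global singular value set is then the countable union of the local ones, which by the Baire category theorem is meager in $Y$, so the regular values form a set of second category.

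The main delicate point, and the reason the hypothesis takes the form $r > \max\{0, \mathrm{Ind}\,f\}$ rather than something simpler, is managing the regularity budget: the implicit function theorem consumes one derivative to build the normal form, and the finite-dimensional Sard theorem requires $r$ to strictly exceed the difference between the source dimension of $g$ and the target dimension of $g$. The subtle observation is that this difference depends only on $\mathrm{Ind}\,Df(x_0) = \mathrm{Ind}\,f$, which is locally constant on the connected manifold $U$, so a single global hypothesis on $r$ suffices uniformly across the countable cover. A secondary, more routine check is verifying that singularity in the local normal form genuinely corresponds to singularity of $f$, and that a measure-zero set in $W$ truly produces a meager set of singular values of $f$ in $Y$ via the chosen splitting; both follow from the fact that the projection along $V$ is continuous and open.
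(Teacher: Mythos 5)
The paper cites this lemma from the literature without reproducing a proof, so you are on your own here. Your general strategy — local Fredholm normal form plus finite-dimensional Sard plus a countable cover — is the right one (it is Smale's), but two steps are wrong as stated.

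First, the regularity accounting is incorrect. The inverse/implicit function theorem for $C^r$ maps with $r\geq 1$ does \emph{not} lose a derivative: if $\phi(x_1,x_2)=(f_1(x_1,x_2),x_2)$ is $C^r$ with invertible differential, then $\phi^{-1}$ is again $C^r$, so the normal-form map $g=f_2\circ\phi^{-1}$ is $C^r$, not $C^{r-1}$. You both assert that "the implicit function theorem consumes one derivative" and simultaneously that the resulting charts are $C^r$; these cannot both hold, and the former is simply false. The hypothesis $r>\max\{0,\mathrm{Ind}\,f\}$ already reflects \emph{zero} loss of regularity: it is exactly the threshold needed to apply Lemma \ref{lem: Appendix Sard theorem} to the finite-dimensional slice maps $g_{u}:K\to W$ (for $u$ fixed), since $\dim K-\dim W=\mathrm{Ind}\,f$. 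If a derivative were lost you would need $r>1+\max\{0,\mathrm{Ind}\,f\}$, which is not what the lemma assumes.

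Second, and more seriously, you have not justified the passage from ``measure zero in each $W$-slice'' to ``meager in $Y$.'' In the local normal form the singular value set is $\bigcup_{u}\{u\}\times S(g_{u})$ with each $S(g_u)\subset W$ of measure zero; a set of this form need not be meager — there exist measure-zero, comeager subsets of $W$, and if $S(g_u)$ were constantly equal to such a set the union would be comeager, not meager. ``Pulling back through the open projection'' does not rule this out because the singular value set is not of the product form $V'\times(\text{null set})$. The missing ingredient is Smale's local properness argument: a Fredholm $C^1$ map, restricted to a sufficiently small closed ball $\bar{B}$ in the domain, is \emph{proper}, so the singular values of $f|_{\bar B}$ form a \emph{closed} subset of $Y$; fiber-wise Sard then shows this closed set has empty interior (any nonempty open subset would meet some slice $\{u_0\}\times W$ in a nonempty open subset of $W$, contradicting that $S(g_{u_0})$ has measure zero). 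Hence the local singular value set is nowhere dense, and separability of $X$ (hence Lindelöf) yields a countable cover of $U$ by such balls, so the total singular value set is a countable union of nowhere dense sets, i.e., meager. Without the closedness step the conclusion does not follow.

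A smaller point: your sentence ``the singular values of $f$ project \ldots to singular values of the finite-dimensional $C^r$ map $g$'' conflates $g:V\times K\to W$ with its slices $g_u:K\to W$. A singular point $(u,v)$ of $f$ is one where $\partial_v g(u,v)$ fails to be surjective; this makes $v$ a critical point of $g_u$, not necessarily $(u,v)$ a critical point of $g$ (the $u$-derivatives may restore surjectivity). Sard must be applied slice-by-slice, as you later indicate but should state consistently.
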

There is also an infinite dimensional version of implicit function theorem as follows:
\begin{lem}[Theorem A.3.3 in \cite{MS2004}]\label{lem: Appendix IFT}
Let $X,Y$ be Banach spaces and let $U\subset X$ be an open subset. If $f: U\rightarrow Y$ is a $C^r$-map and $y$ is a regular value of $f$, then $f^{-1}(y)\subset X$ is a $C^r$ submanifold. Furthermore, if $f$ is Fredholm, then $\dim f^{-1}(y)=\mathrm{Ind}f$.
\end{lem}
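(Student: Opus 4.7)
The plan is to reduce this to the classical inverse function theorem in Banach spaces, applied pointwise on $f^{-1}(y)$. Fix $x_0\in f^{-1}(y)$. Since $y$ is a regular value, $L:=Df(x_0):X\to Y$ is a bounded surjective linear map. The first step is to produce a splitting $X=K\oplus Z$ where $K:=\ker L$ and $Z$ is a closed complement; once such a splitting exists, $L|_Z:Z\to Y$ is a continuous linear bijection and hence a Banach space isomorphism by the open mapping theorem.

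Next I would build the chart. Let $P:X\to K$ be the projection associated with $X=K\oplus Z$ and define
\begin{equation*}
\Phi: U\to K\times Y,\qquad \Phi(x):=\bigl(P(x-x_0),\,f(x)\bigr).
\end{equation*}
Its differential at $x_0$ sends $v=v_K+v_Z$ to $(v_K,L|_Z v_Z)$, which is a bounded linear isomorphism. By the inverse function theorem for $C^r$-maps between Banach spaces, $\Phi$ is a local $C^r$-diffeomorphism near $x_0$. Since $\Phi(f^{-1}(y))$ coincides locally with the slice $K\times\{y\}$, this chart exhibits $f^{-1}(y)$ as a $C^r$-submanifold of $X$ modeled on $K$ in a neighborhood of $x_0$. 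Varying $x_0$ over $f^{-1}(y)$ gives the submanifold structure globally.

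For the Fredholm statement, the existence of the complement $Z$ is automatic: $\ker L$ is finite-dimensional and finite-dimensional subspaces of Banach spaces are always closed and complemented (pick a basis and use Hahn--Banach to construct continuous dual projections). Moreover, since $L$ is surjective one has $\mathrm{coker}\,L=0$, so $\mathrm{Ind}\,f=\mathrm{Ind}\,L=\dim\ker L-\dim\mathrm{coker}\,L=\dim K$, which agrees with the local dimension of the chart constructed above.

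The main technical point I would expect to be careful about is the complementability of $\ker L$ in the non-Fredholm half of the statement, as an arbitrary closed subspace of a Banach space need not admit a closed complement; however, this is exactly the regularity hypothesis one implicitly uses (and which holds automatically in the Fredholm case, which is the only case actually invoked in the applications of this lemma in Section~\ref{sec: generic nondegenercy of Rimannian metrics}). The remaining verifications -- that the chart $\Phi$ is $C^r$, that its differential is an isomorphism, and that the slice $K\times\{y\}$ pulls back to $f^{-1}(y)$ -- are all routine consequences of the inverse function theorem and the splitting.
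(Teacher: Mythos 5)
The paper does not prove this lemma itself; it simply records it as a citation to Theorem~A.3.3 in \cite{MS2004}, so there is no in-paper argument to compare against. Your proposal is the standard argument: split $X=\ker L\oplus Z$, package the map as $\Phi(x)=(P(x-x_0),f(x))$, check $D\Phi(x_0)$ is an isomorphism onto $\ker L\times Y$, and apply the Banach inverse function theorem to obtain a submanifold chart modeled on $\ker L$; the index computation $\dim f^{-1}(y)=\dim\ker L=\mathrm{Ind}\,L$ is then immediate from surjectivity of $L$. This is correct.

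One caveat worth making explicit rather than leaving implicit: as stated in the paper, ``regular value'' is defined only by surjectivity of $Df(x)$. For a general $C^r$-map between Banach spaces this is not enough---you need $\ker Df(x)$ to admit a closed complement, and an arbitrary closed subspace of a Banach space may fail to be complemented. So the first sentence of the lemma, read literally with the paper's definition of regular value, is not quite a theorem; it becomes one either under the standard convention that ``regular'' includes complementability of the kernel (which is how references such as \cite{MS2004} set things up), or automatically once $f$ is Fredholm, which is the only case used in Section~\ref{sec: generic nondegenercy of Rimannian metrics}. You did flag this point at the end, which is good, but I would state it as a hypothesis being added rather than as something ``implicitly'' present, since nothing in the paper's surrounding definitions supplies it.
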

Let $f:X\rightarrow Y$ be a $C^1$ map and let $W\subset Y$ be a submanifold. We call $f$ is transverse to $W$ if
$\mathrm{Im}Df(x)+T_{f(x)}W=T_{f(x)}Y,\forall x\in f^{-1}(W).$ As a corollary of Lemmas \ref{lem: Appendix Sard-Smale theorem} and \ref{lem: Appendix IFT}, we have
\begin{lem}[Theorem 3.3 in \cite{Smale1965}]\label{lem: Appendix Sard-Smale theorem 2}
Let $f: X\rightarrow Y$ be a $C^r$-map. Assume $f$ is transverse to a finitely dimensional smooth submanifold $W\subset Y$, then $f^{-1}(W)$ is a $C^{r}$-submanifold of $X$. Furthermore, if $f$ is Fredholm, then $\dim f^{-1}(W)=\mathrm{Ind}f+\dim W.$
\end{lem}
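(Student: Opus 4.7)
The plan is to reduce the statement to a local application of the implicit function theorem (Lemma \ref{lem: Appendix IFT}) by first flattening the submanifold $W$ in $Y$. Fix $x_0 \in f^{-1}(W)$ and put $y_0 = f(x_0)$. Since $W$ is a finite-dimensional smooth submanifold of the Banach manifold $Y$, the tangent space $T_{y_0}W$ is finite-dimensional and hence automatically admits a closed complement $Y_1 \subset T_{y_0}Y$. Using a standard local chart straightening $W$, I would produce an open neighborhood $V$ of $y_0$ in $Y$ and a $C^r$ submersion $\pi: V \to Y_1$ such that $W \cap V = \pi^{-1}(0)$ and $D\pi(y_0) = \pi_1$, the projection $T_{y_0}Y \to Y_1$ with kernel $T_{y_0}W$. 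Near $x_0$ we then have $f^{-1}(W) = (\pi \circ f)^{-1}(0)$.

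Next I would verify that $0$ is a regular value of the composition $\pi \circ f$. At $x_0$ its differential is $\pi_1 \circ Df(x_0)$, and the transversality condition $\mathrm{Im}\,Df(x_0) + T_{y_0}W = T_{y_0}Y$ is exactly equivalent to $\pi_1 \circ Df(x_0)$ being surjective. Applying Lemma \ref{lem: Appendix IFT} to $\pi \circ f$ then produces a $C^r$ local model of $f^{-1}(W)$ as a submanifold of $X$ near $x_0$; covering $f^{-1}(W)$ by such neighborhoods yields the first assertion.

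For the dimension count when $f$ is Fredholm, I would observe that $\pi \circ f$ is also Fredholm: since $T_{y_0}W$ is finite-dimensional, the kernel $(\pi_1 \circ Df(x_0))^{-1}(0) = Df(x_0)^{-1}(T_{y_0}W)$ is finite-dimensional, and the cokernel (already $0$ by transversality) is finite-dimensional. A short linear-algebra computation gives
\begin{equation*}
\dim \ker (\pi_1 \circ Df(x_0)) = \dim \ker Df(x_0) + \dim\bigl(T_{y_0}W \cap \mathrm{Im}\,Df(x_0)\bigr),
\end{equation*}
while surjectivity of $\pi_1$ on $\mathrm{Im}\,Df(x_0)$ modulo $T_{y_0}W$ implies
\begin{equation*}
\mathrm{codim}\,\mathrm{Im}\,Df(x_0) = \dim T_{y_0}W - \dim\bigl(T_{y_0}W \cap \mathrm{Im}\,Df(x_0)\bigr).
\end{equation*}
Combining these identities gives $\mathrm{Ind}(\pi \circ f) = \mathrm{Ind}(f) + \dim W$, and the dimension claim follows from the Fredholm part of Lemma \ref{lem: Appendix IFT}.

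The main conceptual step is the local straightening of $W$, which relies only on the finite-dimensionality of $T_{y_0}W$ to ensure a closed complement exists in $T_{y_0}Y$; the rest is the routine index bookkeeping for a composition of a Fredholm operator with a projection onto a complement of a finite-dimensional subspace. No deeper transversality machinery is needed beyond the two appendix lemmas already recorded.
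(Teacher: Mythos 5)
Your proof is correct, and in fact more than the paper provides: the paper states this lemma with a citation to Smale's Theorem~3.3 and a remark that it is ``a corollary of'' Lemmas~\ref{lem: Appendix Sard-Smale theorem} and~\ref{lem: Appendix IFT}, but gives no argument. Your route is the standard one and it is carried out cleanly: straighten $W$ locally via a $C^r$ submersion $\pi$ onto a closed complement $Y_1$ of $T_{y_0}W$ (which exists because $T_{y_0}W$ is finite-dimensional), observe that transversality of $f$ to $W$ is precisely surjectivity of $\pi_1\circ Df(x_0)$, then invoke Lemma~\ref{lem: Appendix IFT} for $\pi\circ f$. Your linear-algebra bookkeeping for the index is right: $\dim\ker(\pi_1\circ L)=\dim\ker L+\dim(T_{y_0}W\cap\mathrm{Im}\,L)$, while transversality gives $\mathrm{codim}\,\mathrm{Im}\,L=\dim T_{y_0}W-\dim(T_{y_0}W\cap\mathrm{Im}\,L)$, and adding these yields $\mathrm{Ind}(\pi\circ f)=\mathrm{Ind}f+\dim W$. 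One small observation worth recording: your argument makes clear that Lemma~\ref{lem: Appendix Sard-Smale theorem 2} actually follows from Lemma~\ref{lem: Appendix IFT} alone, once the flattening is in hand; the Sard--Smale theorem~(Lemma~\ref{lem: Appendix Sard-Smale theorem}) is not needed for this statement itself, but rather is used \emph{alongside} it elsewhere in the paper to guarantee that transversality can be achieved generically.
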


\section{Local variations and generic perturbations} \label{sec: generic nondegenercy of Rimannian metrics}
In this section, we aim to establish two results, namely Lemma \ref{lem: genericity of nondegenercy 1} and Theorem \ref{lem: genericity of nondegenercy 2}. 
We prove in Lemma \ref{lem: genericity of nondegenercy 1} that any Finsler $1$-current on $(\Sigma,F)$ can exhibit non-degeneracy under an infinitesimal perturbation of $F$ within $\mathcal{F}(\Sigma)$. Moreover, these perturbations preserve the symmetric Finsler and Riemannian structure if $F\in \mathcal{F}^s(\Sigma)$ and $\mathcal{M}(\Sigma)$. Theorem \ref{lem: genericity of nondegenercy 2} establishes that for any given $N\gg 0$, the bumpy metrics occupy a full-measure subset in $[0,1]^N$ for a generic set of smooth $[0,1]^N$-family of metrics in $\mathcal{F}(\Sigma)$, $\mathcal{F}^s(\Sigma)$ and $\mathcal{M}(\Sigma)$.

We shall first investigate the perturbation of metrics on the linearized Poincar\'e map $P_\sigma$. Consider $F,F_1\in\mathcal{F}(\Sigma)$ and  $f=-\ln F_{1}^*$. From Proposition~\ref{prop: properties of Finsler geodesic and Reeb flows}-(1), we observe that the contact form $e^{f}\lambda_F\in \Lambda^{Fin}_F$ on $S^*_F\Sigma$ associates to $F_1$. Let $U$ be a sufficiently small tubular neighborhood of $\gamma_\sigma\in \mathcal P(S^*_F\Sigma,\lambda_F)$ with $\sigma\in \mathrm{CG}^F$. If $f|_{\gamma_\sigma}\equiv0$ and $df|_{\gamma_\sigma}\equiv 0$, then $\gamma_\sigma$ belongs to $\mathcal P(S^*_F\Sigma,e^f\lambda_F)$, since the Reeb vector field does not change along $\gamma_\sigma$. Consider an open subset $\mathcal{U}\subset \Sigma$ equipped with local coordinates $(q,p)=pdq$ in $T^*\mathcal{U}$. Let $S^*_F\mathcal{U}:=S^*_F\Sigma\cap T^*\mathcal{U}$. We define
\begin{equation}\label{equ: collection of perturbation function f}
\begin{aligned}
\mathcal{F}^{pert}_{\mathcal{U}}:&=\left \{f\in C^\infty_0(S^*_F\mathcal{U},\mathbb{R}):\ f=-\frac{1}{2}\ln\left(1+(A(q)p,p)\right)\right\},\\
\mathcal{F}^{pert}_{\mathcal{U},0}:&=\left \{f\in \mathcal{F}^{pert}_{\mathcal{U}},\ f\big|_{\gamma_\sigma}\equiv 0,\ df\big|_{\gamma_\sigma}\equiv 0\right\},
\end{aligned}
\end{equation}
where $A(q)$ is a symmetric $2\times 2$ matrix that smoothly depends on $q$ with support lies in $\mathcal{U}$. Moreover, if $\mathcal{U}=\pi(U)$, then $\mathcal{U}$ is a sufficiently small open neighborhood of $\sigma$ with local coordinates $q$ generated by the tangent vector field $\dot \sigma$ and the normal vector field along $\sigma$. However, the local coordinates are inadequate for extension to $\pi(U)$, as self-intersections of $\sigma$ might occur. Therefore, an additional requirement is necessary: $A(\sigma(t_1))=A(\sigma(t_2))$ for every $t_1\neq t_2$ satisfying $\sigma(t_1)=\sigma(t_2)$.


Consider a variation $F_s\in\mathcal{F}(\Sigma)$ of $F=F_0$ as follows. Denote $F^*_q(p):=F^*(q,p)$. Let $f_s\in \mathcal{F}^{pert}_{\pi(U),0}$. It can be extended to
$f_s(q,p)=-\frac{1}{2}\ln\left(F^*_q(p)^2+s(A(q)p,p)\right),\forall (q,p)\in T^*\pi(U)$,
Define the variation $\{F_s\}_{s\in(-\epsilon,\epsilon)}$ as
\begin{equation}\label{equ: variation Fs}
F_s:=F^*_s\circ \mathcal{L}_{F_s}\quad\text{and}\quad F_s^*:=e^{-f_s}=\left\{\begin{aligned}
&\big(F^*_q(p)^2+s(A(q)p,p)\big)^{1/2},\quad \forall (q,p)\in T^*\pi(U),\\
&F^*,\quad \text{elsewhere}.
\end{aligned}\right.
\end{equation}
Especially, the variation $F_s\in \mathcal{F}^s(\Sigma)$ or $\mathcal{M}(\Sigma)$, if $F\in \mathcal{F}^s(\Sigma)$ or $\mathcal{M}(\Sigma)$.

Now we represent $A(\cdot)$ in a small subset $U_1\subset U$, properly. Let $V=V_1\partial_{p_1}+V_2\partial_{p_2}$ be a nowhere vanishing section in $TU_1$, which tangents to the fibers of $S^*_{F}\Sigma$. By identifying $T_p(T^*_q\Sigma)$ and $T_q^*\Sigma$, we obtain a basis $\{\hat V(q,p),pdq\}$ in $T^*_{q}\Sigma$, where $\hat V(q,p)=V_1(q,p)dq_1+V_2(q,p)dq_2$. Under this basis, we obtain a matrization $A=\big(A_{ij}\big)_{2\times 2}$ of $A(q)$ depends on both $q$ and $p$. To remove the dependency in $p$, we do the following modifications. After a suitable translation, we assume that $l_\sigma/2\in \mathrm{reg}(\sigma)$, where $l_\sigma=l_F(\sigma)$ and $\mathrm{reg}(\sigma)\subset \mathbb{R}/(l_\sigma \mathbb{Z})$ denotes the complement of self-intersecting moments. Choose $U_1$ as a sufficiently small open neighborhood of $\gamma_\sigma(l_\sigma /2)$ so that there is no self-intersection points in the interior and $U_1\cap \sigma=\sigma|_{(l_\sigma/2-\epsilon,l_\sigma/2+\epsilon)}$. Then we choose a family of curves $\{\sigma_s\}_{s\in(-\epsilon,\epsilon)}$, which encompasses the projected domain $\pi(U_1)$ so that $\sigma_0=\sigma|_{(l_\sigma/2-\epsilon,l_\sigma/2+\epsilon)}$, where $\partial_s \sigma_s(t)\big|_{s=0}$ is a transverse vector field along $\sigma$ in $\pi(U_1)$. Note that
$$
\xi_F|_{U_1}=\ker pdq|_{U_1}=\{V,V'\}|_{U_1},\quad V'|_{U_1}=-p_2\partial_{q_1}+p_1\partial_{q_2},
$$
where $V'|_{\gamma_\sigma(t)}=-p_2(t)\partial_{q_1}+p_1(t)\partial_{q_2}$ transverse to $\dot \sigma$ along $\sigma$. Under a suitable re-scaling of $V$ and reparametrization of $\sigma_{s}(t)$, we may assume that $\{V,V'\}$ is a symplectic basis with respect to $d\lambda_F$ and $V'=\partial_s \sigma_s(t)$ after projecting to $\pi(U_1)$. Then we acquire new coordinates $(t_1,t_2)=\sigma_{t_1}(t_2)$ in $\pi(U_1)$, along with a new basis $\{\hat V_{t_1,t_2},P_{t_1,t_2}\}$ for each $T^*_{\sigma_{t_1}(t_2)}\Sigma$, where
\begin{equation}\label{equ: P and V}
P_{t_1,t_2}=\mathcal{L}_{F}\left(\frac{\dot\sigma_{t_1}(t_2)}{F(\dot\sigma_{t_1}(t_2))}\right)\in S^*_{F,\sigma_{t_1}(t_2)}\Sigma,\quad \hat V_{t_1,t_2}=\hat V(\sigma_{t_1}(t_2),P_{t_1,t_2})\in T^*_{\sigma_{t_1}(t_2)}\Sigma.
\end{equation}
Especially, $P_{0,t}=p(t)$ and $\hat V_{0,t}=\hat V_{\sigma(t),p(t)}$. Under this basis, we denote the matrization of $A(\cdot)$ as
\begin{equation}\label{equ: local representation of tilde A}
A(\sigma_{t_1}(t_2))=\big(\tilde A_{ij}(t_1,t_2)\big)_{2\times 2},\quad \forall q\in \pi(U_1),
\end{equation}
where $\tilde A_{11}(t_1,t_2)=(A(\sigma_{t_1}(t_2))\hat V_{t_1,t_2},\hat V_{t_1,t_2})$, $\tilde A_{12}(t_1,t_2)=(A(\sigma_{t_1}(t_2))P_{t_1,t_2},\hat V_{t_1,t_2})$ and $\tilde A_{22}(t_1,t_2)=(A(\sigma_{t_1}(t_2))P_{t_1,t_2},P_{t_1,t_2})$. Furthermore, for each $(t_1,t_2)$, we define another frame
\begin{equation}\label{equ: basis bar V and bar W}
\{V_{t_1,t_2}, W_{t_1,t_2}\}:=\left\{V(\sigma_{t_1}(t_2), P_{t_1,t}),\partial_{t_1}(\sigma_{t_1}(t_2), P_{t_1,t})\right\}\subset T_p\left(T^*_{\sigma_{t_1}(t_2)}\Sigma\right).
\end{equation}
Since $\omega_0(V,V')=1$ on $\sigma(t)$ and $W_{t_1,t_2}=V'(\sigma_{t_1}(t_2), P_{t_1,t})+a(t_1,t_2)V_{t_1,t_2}$ for some function $a(t_1,t_2)$, then $\{V_{t_1,t_2}, W_{t_1,t_2}\}$ is a new symplectic basis of $\xi_{F}$. Define
$h(q,p):=\partial_s f_s(q,p)\big|_{s=0}=-\frac{1}{2}(\tilde A(q)p,p)$. Under the basis $\{V_{0,t}, W_{0,t}\}$, the gradient of $h|_{\xi_F}$ along $\gamma_\sigma(t)$ becomes
\begin{equation}\label{equ: gradient of h}
\begin{aligned}
\nabla h\big|_{\xi_F}(\gamma_\sigma(t))&=\left(-(A(\sigma(t))p(t),\hat V_{0,t}),-\frac{1}{2}\partial_{t_1} ( A(\sigma_{t_1}(t))P_{t_1,t}, P_{t_1,t})\big|_{t_1=0}\right)\\
&=\left(-\tilde A_{12}(0,t),-\frac{1}{2}\partial_{t_1}\tilde A_{22}(0,t)\right),
\end{aligned}
\end{equation}
and the Hessian of $h|_{\xi_F}$ is
\begin{equation}\label{equ: hessen of h}
\displaystyle \nabla^2h\big|_{\xi_F}(\gamma_\sigma(t))=\begin{pmatrix}-\tilde A_{11}(0,t) & -\partial_{t_1}\tilde A_{12}(0,t)\\
-\partial_{t_1}\tilde A_{12}(0,t) & -\frac{1}{2}\partial^2_{t_1}\tilde A_{22}(0,t)\end{pmatrix},
\end{equation}
where $\{\tilde A_{ij}\}$ comes from \eqref{equ: local representation of tilde A}.
\begin{prop}\label{prop: condition of perturbation metrix tilde A}
Given $\sigma\in \mathrm{CG}^F$, a small open subset $U_1\subset U$ as above and a family of functions $\{f_s: s\in(-\epsilon,\epsilon)\}$ in $\mathcal{F}^{pert}_{\pi(U_1)}$. Then $f_s|_{\gamma_\sigma}\equiv 0$ and $df_s|_{\gamma_\sigma}\equiv 0$ if and only if
\begin{eqnarray*}
\tilde A_{22}(0,t)=\partial_{t_1}\tilde A_{22}(0,t)=\tilde A_{12}(0,t)=0,\quad \forall t\in\mathbb{R}/(l_\sigma\mathbb{Z}).
\end{eqnarray*}
\end{prop}
\begin{proof}
Recall $f_s(q,p)=-\frac{1}{2}\ln(1+(A(q)p,p))$ and $\gamma_\sigma(t)=(\sigma(t),p(t))$. According to \eqref{equ: P and V}, we have $\tilde A_{12}(0,t)=(A(\sigma(t))p(t),\hat V_{0,t})$ and $\tilde A_{22}(0,t)=(A(\sigma(t))p(t),p(t))$, where $\hat V_{0,t}$ associates to $V_{0,t}$, which tangents to the fiber $S^*_{F,\sigma(t)}\Sigma$ at $\gamma_\sigma(t)$. Then $f_s|_{\gamma_\sigma}\equiv 0$ is equivalent to $\tilde A_{22}(0,t)\equiv 0,\forall t$. Moreover, since $T_{p(t)}(S^*_F\Sigma)$ consists of three independent directions: $\partial_{t_1},\partial_{t}\in T_{\sigma(t)}\Sigma$ and $V_{0,t}$. We observe that $\tilde A_{22}(0,t)\equiv 0$ is equivalent to $\partial_{t}f_s|_{\gamma_\sigma}\equiv 0$ and $\tilde A_{12}(\sigma(t))\equiv 0$ is equivalent to $df_s(V_{0,t})|_{\gamma_\sigma}\equiv 0$. Moreover,
$$\begin{aligned}
\partial_{t_1}f_s|_{\gamma_\sigma(t)}&=\partial_{t_1}(A(\sigma_{t_1}(t))p(t),p(t))|_{t_1=0}\\
&=\partial_{t_1}(A(\sigma_{t_1}(t))P_{t_1,t},P_{t_1,t})_{t_1=0}-2(A(\sigma(t))p(t),a(t)\hat V_{0,t})\\
&=\partial_{t_1}\tilde A_{22}(0,t)-2a(0,t)\tilde A_{12}(0,t).
\end{aligned}$$
Here $P_{0,t}=p(t)$ and $\partial_{t_1}P_{0,t}=W_{0,t}-V'(\sigma(t), p(t))=a(0,t)V_{0,t}$ are used. Hence, this proposition follows.
\end{proof}

Now we consider the variation of the linearized Poincar\'e map $P_{\sigma}$ of $\sigma$ with respect to $e^{f_s}\lambda_F$ for every $f_s\in \mathcal{F}^{pert}_{\pi(U),0}$. Recall the linearized Poincar\'e return map $P_{\sigma}:=P_{\gamma_\sigma}:\mathcal{F}^{pert}_{\pi(U),0}\rightarrow \mathrm{Sp}(\xi_F|_{\gamma_\sigma(0)},\omega_0)$ sending $f$ to a symplectic linear map $P_{\sigma}(f)$, where $\mathrm{Sp}(\xi_F|_{\gamma_\sigma(0)},\omega_0)$ denotes the symplectic group on $(\xi_F|_{\gamma_\sigma(0)},\omega_0)$. Let $D_r^2\subset \mathbb C$ denote the standard disk with radius $r>0$.
Consider the Martinet tube $\Phi: \mathbb R/\mathbb Z\times D^2_\epsilon\rightarrow U$ around $\gamma_\sigma$ for some $\epsilon>0$ sufficiently small, that is $\Phi(\gamma(t))=(t/l_\sigma,0,0)$ and $\Phi^*\lambda_F=H(dz+x_1dx_2)$, where $H$ is a smooth positive function on $\mathbb R/\mathbb Z\times D^2_\epsilon$ satisfying $H(z,0,0)=l_\sigma$ and $dH(z,0,0)=0$. Using the Darboux theorem, we may further require that $H|_{\Phi(U_1)}\equiv l_\sigma$. Then we establish an identification between $U$ and $\mathbb R/\mathbb Z\times D^2_\epsilon$, along with the corresponding Reeb flows. We have the following result.

\begin{lem}\label{lem: differential of P_gamma(f)}
Given $\sigma\in \mathrm{CG}^F,U$ and $U_1$ as above. Consider a variation $\{f_s: s\in(-\epsilon,\epsilon)\}\subset \mathcal{F}^{pert}_{\pi(U),0}$ of $f_0=0$ and its initial derivative $h(q,p)=-\frac{1}{2}(A(q)p,p)$ for some $\epsilon>0$ sufficiently small, where the support of $A(\cdot)$ lies in $\pi(U_1)$ and satisfies $\mathrm{supp}A\cap \sigma= \sigma(l_\sigma/2-\epsilon,l_\sigma/2+\epsilon)$. In the Martinet tube $\Phi: \mathbb R/\mathbb Z\times D^2_\epsilon\rightarrow U$, we have
\begin{eqnarray*}
dP_{\sigma}(h)\big|_{f=0}=P_{\sigma}(0)R_0(\frac{l_\sigma}{2})^{-1}\left(\frac{1}{2l_\sigma}\int^{l_\sigma/2+\epsilon}_{l_\sigma/2-\epsilon}J\nabla^2_x\tilde h(\frac{t}{l_\sigma},0,0)dt\right)R_0(\frac{l_\sigma}{2}),\quad J=\begin{pmatrix} 0& -1\\ 1 & 0\end{pmatrix},
\end{eqnarray*}
where $\tilde h=h\circ \Phi$ and $R_0(t)$ is the fundamental solution of $\dot R(t)=-l_\sigma^{-2}J\nabla^2_x H(t/l_\sigma,0,0)R(t)$ with $R_0(0)=I_2$.
\end{lem}

\begin{proof}
Since $H(dz+x_1dx_2)=\Phi^*\lambda_F$ in $U$, then we have the following local model
\begin{equation}\label{equ: local model of fs}
\Phi: (\mathbb R/\mathbb Z\times D^2_\epsilon, H_s(dz+x_1dx_2))\cong(U,e^{f_s}\lambda_F),\quad H_s=e^{f_s\circ\Phi}H.
\end{equation}
In the following proof, we identify $U$ with $\mathbb R/\mathbb Z\times D^2_\epsilon$. By assumption, we have
\begin{equation}\label{equ: properties of fs and H}
\begin{aligned}
&\mathrm{supp}f_s=\mathrm{supp}h\subset S^*_F\pi(U_1),\quad H|_{U_1}\equiv l_\sigma\equiv H|_{\mathbb R/\mathbb Z\times\{0\}},\\
&\quad \quad f_s|_{\gamma_\sigma}=0=h|_{\gamma_\sigma},\quad df_s|_{\gamma_\sigma}= 0= dH|_{\mathbb R/\mathbb Z\times\{0\}}.
\end{aligned}
\end{equation}
Then we obtain that $H_s=e^{f_s\circ\Phi}l_\sigma$ in $U_1$, $H_s|_{\mathbb R/\mathbb Z\times\{0\}}=l_\sigma$ and $dH_s|_{\mathbb R/\mathbb Z\times\{0\}}=0$.
Let $\alpha_s=H_s\alpha_{st}$, where $\alpha_{st}:=dz+x_1dx_2$. We compute the Reeb vector field of $\alpha_s$ as
\begin{equation}\label{equ: formula of reeb vf}
\begin{aligned}
R_{\alpha_s}:&=(R_{\alpha_s,z},R_{\alpha_s,x_1},R_{\alpha_s,x_2})\\
&=H_s^{-2}(H_s+x_1 H_{s,x_1}',H_{s,x_2}'-x_1H_{s,z}',-H_{s,x_1}').
\end{aligned}
\end{equation}
Let $x_s=(x_{s,1},x_{s,2})$ and let $(z_s(t),x_s(t))$ be an integral curve of $R_{\alpha_s}$ satisfying $\dot x_s(t)=H_s^{-2}(H_{s,x_2}'-x_1H_{s,z}',-H_{s,x_1}')(z_s(t),x_s(t)),\forall t\in\mathbb{R}$.
Linearize this equation along $\mathbb{R}/\mathbb{Z}\times\{0\}$ in $(x_1,x_2)$-direction, we obtain that
\begin{equation}\label{equ: linearized Reeb flow}
\begin{aligned}
\dot R_s(t)&=l_\sigma^{-2}\begin{pmatrix}(H_{s})_{x_1x_2}'' & (H_{s})_{x_2x_2}''\\ -(H_{s})_{x_1x_1}'' & -(H_{s})_{x_1x_2}''\end{pmatrix}(t/l_\sigma,0,0)R_s(t)\\
&=l_\sigma^{-2}J\bigg(-\nabla_x^2H+\frac{sl_\sigma}{2}\nabla^2_x\tilde h\bigg)(t/l_\sigma,0,0)R_s(t),\quad R_s(0)=I_2.
\end{aligned}
\end{equation}
The last equality follows from (\ref{equ: properties of fs and H}) and the following identities
\begin{gather*}
\nabla_x^2H_s(z,0,0)=(\nabla_x^2H+l_\sigma\nabla_x^2(f_s\circ\Phi))(z,0,0),\\
\nabla_x^2(f_s\circ\Phi)(z,0,0)=-\frac{s}{2}\nabla_x^2\tilde h(z,0,0),\quad \forall z\in \mathbb R/\mathbb Z.
\end{gather*}
By taking the derivative of (\ref{equ: linearized Reeb flow}) at $s=0$, we have
\begin{equation}\label{equ: differential of Rs}
\partial_s \dot R_s(t)\big|_{s=0}=l_\sigma^{-2}\left(\frac{l_\sigma}{2}J\nabla^2_x\tilde h(t/l_\sigma,0,0)R_0(t)-J\nabla^2_xH(t/l_\sigma,0,0) \partial_s R_s(t)\big|_{s=0}\right).
\end{equation}
From the assumption and (\ref{equ: properties of fs and H}), we know that
\begin{equation}\label{equ: hessen of H and tilde h is 0}
\begin{aligned}
\nabla^2_x H(t/l_\sigma,0,0)&=0,\quad \forall t\in(l_\sigma/2-\epsilon,l_\sigma/2+\epsilon),\\ 
\nabla^2_x\tilde h(t/l_\sigma,0,0)&=0,\quad \forall t\notin (l_\sigma/2-\epsilon,l_\sigma/2+\epsilon).
\end{aligned}
\end{equation}
This implies that $R_0(t)\equiv R_0(l_\sigma/2),\forall t\in(l_\sigma/2-\epsilon,l_\sigma/2+\epsilon)$. Moreover, for every $s\in(-\epsilon,\epsilon)$,
\begin{equation}\label{equ: formula of Rs}
R_s(t)=\left\{\begin{aligned}
&R_0(t),\quad &\forall t\in[0,l_\sigma/2-\epsilon],\\
&R_0(t)R_0(l_\sigma/2+\epsilon)^{-1}R_s(l_\sigma/2+\epsilon),\quad & \forall t\in[l_\sigma/2+\epsilon,1].
\end{aligned}\right.
\end{equation}

By definition, the linearized Poincar\'e map is $P_{\sigma}(f_s)=R_s(1),\forall s\in(0,\epsilon)$. In particular, $P_\sigma(0)=R_0(1)$. Let $C:={(2l_\sigma)}^{-1}\int^{l_\sigma/2+\epsilon}_{l_\sigma/2-\epsilon}J\nabla^2_x\tilde h(t/l_\sigma,0,0)R_0(t)dt$. By taking the derivative of $P_{\sigma}(f_s)$ at $s=0$, we deduce that
\begin{equation}\label{equ: differential of P}
\begin{aligned}
\quad\ dP_{\sigma}(h)\big|_{f=0}&=\partial_s R_s(l_\sigma)\big|_{s=0}=\partial_s R_s(l_\sigma)\big|_{s=0}-\partial_s R_s(0)\big|_{s=0}=\int^{l_\sigma}_0\partial_s \dot R_s(t)\big|_{s=0}dt&\\
&=l_\sigma^{-2}\int^{l_\sigma}_0\frac{l_\sigma}{2}J\nabla^2_x\tilde h(t/l_\sigma,0,0)R_0(t)-J\nabla^2_xH(t/l_\sigma,0,0)\partial_s R_s(t)\big|_{s=0}dt&\\
&=C-l_\sigma^{-2}\int^{l_\sigma}_{l_\sigma/2+\epsilon}J\nabla^2_xH(t/l_\sigma,0,0)\partial_s R_s(t)\big|_{s=0}dt,
\end{aligned}
\end{equation}
where $R_s(0)\equiv I_2,\forall s$, (\ref{equ: hessen of H and tilde h is 0}) and (\ref{equ: formula of Rs}) are used. Using (\ref{equ: formula of Rs}), (\ref{equ: linearized Reeb flow}) and (\ref{equ: hessen of H and tilde h is 0}), we obtain
\begin{equation}\label{equ: Rs at t0+e}
\begin{aligned}
&\quad\ R_s(l_\sigma/2+\epsilon)-R_s(l_\sigma/2-\epsilon)=\int^{l_\sigma/2+\epsilon}_{l_\sigma/2-\epsilon}\dot R_s(t)dt\\
&=l_\sigma^{-2}\int^{l_\sigma/2+\epsilon}_{l_\sigma/2-\epsilon}J(\frac{sl_\sigma}{2}\nabla^2_x\tilde h-\nabla^2_xH)(t/l_\sigma,0,0)R_s(t)dt\\
&=\int^{l_\sigma/2+\epsilon}_{l_\sigma/2-\epsilon}\frac{s}{2l_\sigma}J\nabla^2_x\tilde h(t/l_\sigma,0,0)R_s(t)dt.
\end{aligned}
\end{equation}
Then, by taking the differential of (\ref{equ: Rs at t0+e}) at $s=0$, we have
$$
\partial_s R_s(l_\sigma/2+\epsilon)\big|_{s=0}=\frac{1}{2l_\sigma}\int^{l_\sigma/2+\epsilon}_{l_\sigma/2-\epsilon}J\nabla^2_x\tilde h(t/l_\sigma,0,0)R_0(t)dt=C.
$$
From this relation and (\ref{equ: formula of Rs}), we conclude that
\begin{equation}\label{equ: differential of Rs after t0+e}
\begin{aligned}
\partial_s R_s(t)|_{s=0}&=R_0(t)R_0(l_\sigma/2+\epsilon)^{-1}\partial_s R_s(l_\sigma/2+\epsilon)\big|_{s=0}\\
&=R_0(t)R_0(l_\sigma/2+\epsilon)^{-1}C,\quad \forall t\in[l_\sigma/2+\epsilon,l_\sigma].
\end{aligned}
\end{equation}
Finally, we have
\begin{equation*}
\begin{aligned}
dP_{\sigma}(h)|_{f=0}&=C+\int^{l_\sigma}_{l_\sigma/2+\epsilon}-l_\sigma^{-2}J\nabla^2_xH(t/l_\sigma,0,0)R_0(t)R_0(l_\sigma/2+\epsilon)^{-1}Cdt\\
&=C+(R_0(l_\sigma)-R_0(l_\sigma/2+\epsilon))R_0(l_\sigma/2+\epsilon)^{-1}C=R_0(l_\sigma)R_0(l_\sigma/2+\epsilon)^{-1}C\\
&=P_\sigma(0)R_0(l_\sigma/2)^{-1}\bigg(\frac{1}{2l_\sigma}\int^{l_\sigma/2+\epsilon}_{l_\sigma/2-\epsilon}J\nabla^2\tilde h(t/l_\sigma,0,0)dt\bigg)R_0(l_\sigma/2),
\end{aligned}
\end{equation*}
where (\ref{equ: differential of P}), (\ref{equ: differential of Rs after t0+e}) and (\ref{equ: linearized Reeb flow}) are used. The proof is completed.
\end{proof}
As a consequence of Lemma \ref{lem: differential of P_gamma(f)}, we prove the following lemma.
\begin{lem}\label{lem: genericity of nondegenercy 1}
Let $F\in\mathcal{F}(\Sigma)$ and $\sigma_1,\cdots,\sigma_k\in \mathrm{CG}^F$ be pairwise distinct. For any $\epsilon>0$ sufficiently small, there exists a $F'\in \mathcal{F}(\Sigma)$ such that $\|F-F'\|_{C^{\infty}}<\epsilon$ and $\sigma_1,\cdots,\sigma_k\in\mathrm{CG}^{F'}$ are nondegenerate. Moreover, if $F$ belongs to $\mathcal{F}^s(\Sigma)$ or $\mathcal{M}(\Sigma)$, then $F'$ belongs to the same class.
\end{lem}
\begin{proof}
Assume $l_{\sigma_i}/2\in \mathrm{reg}(\sigma_i),\forall i=1,\cdots,k$. For each $\sigma_i$, we can choose a small open neighborhood $U_{i,1}$ of $\gamma_{\sigma_i}(l_{\sigma_i}/2)$ in its small tubular neighborhood $U_i$ such that $\gamma_{\sigma_i}(l_{\sigma_i}/2-\epsilon,l_{\sigma_i}/2+\epsilon)\subset U_{i,1}$, $\pi(U_{i,1})$ are pairwise disjoint and $\pi(U_{i,1})\cap \sigma_j=\emptyset,\forall i\neq j$. Let $(q,p)$ be the local coordinates in $U_i$. Choose
$f_s=-\frac{1}{2}\ln\left(1+s(A(q)p,p)\right)\in \mathcal{F}^{pert}_{\mathcal{U}_1}$, where $\mathcal{U}_1=\cup_{i=1}^k \pi(U_{i,1})$ and the support of $A(\cdot)$ lies in $\mathcal{U}_1$ with the following form
\begin{equation}\label{equ: Choice of tilde A}
\big(\tilde A_{ij}(t_1,t_2)\big)_{2\times 2}=\begin{pmatrix}a_{i,1}(t_2)&a_{i,2}(t_2)t_1\\a_{i,2}(t_2)t_1& a_{i,3}(t_2)t_1^2\end{pmatrix},\quad a_{i,\{1,2,3\}}\in C^\infty_0((l_{\sigma_i}/2-\epsilon,l_{\sigma_i}/2+\epsilon),\mathbb{R}),
\end{equation}
as the representation (\ref{equ: local representation of tilde A}). Since the projected domains $\pi(U_{i,1}),\forall i$, are pairwise disjoint and each $\pi(U_{i,1})$ avoids $\{\sigma_j\}_{j\neq i}$ as well as self-intersections of $\sigma_i$, we can independently choose \eqref{equ: Choice of tilde A} for each $i$, so that the support of $f_s$ does not intersects $\sigma_i(l_{\sigma_i}/2-\epsilon,l_{\sigma_i}/2+\epsilon)$. By Proposition \ref{prop: condition of perturbation metrix tilde A}, we also have $f_{s}|_{\gamma_{\sigma_i}}\equiv 0\equiv df_{s}|_{\gamma_{\sigma_i}}$. Let $h=\partial_sf_{s}|_{s=0}$. It follows from \eqref{equ: hessen of h} that
\begin{equation}\label{equ: hessen of h at sigma i}
\nabla^2 h|_{\xi_F}(\gamma_{\sigma_i}(t))=\begin{pmatrix}-a_{i,1}(t) & -a_{i,2}(t)\\ -a_{i,2}(t) & -a_{i,3}(t)\end{pmatrix},
\end{equation}
under the basis $\{V^i_{t_1,t_2},W^i_{t_1,t_2}\}$ defined in \eqref{equ: basis bar V and bar W}. Due to the arbitrariness of $a_{i,\{1,2,3\}}$, we can rewrite $\nabla^2 h|_{\xi_F}(\gamma_{\sigma_i}(t))$ in basis $\xi_F|_{\gamma_{\sigma_i}}=\{\partial_{x_1},\partial_{x_2}\}$ with arbitrary elements in $C^\infty_0((l_{\sigma_i}/2-\epsilon,l_{\sigma_i}/2+\epsilon),\mathbb{R})$. Then applying Lemma \ref{lem: differential of P_gamma(f)}, we conclude that $dP_{\sigma_i}|_{f=0}$ is surjective. Hence, we can perturb $F$ to $\hat F=\hat F^*\circ \mathcal L_{F}$ with $\hat F^*=e^{-f_\epsilon}F^*$ so that $\{\sigma_1,\cdots, \sigma_k\}$ are all nondegenerate in $\mathrm{CG}^{\hat F}$. Finally, as the multiplier $e^{-f_\epsilon}$ preserves the structure of symmetric Finsler or Riemannian metrics, it follows that if $F\in\mathcal{F}^s(\Sigma)$ or $\mathcal{M}(\Sigma)$, then $\hat F$ belongs to the same class. The lemma holds.
\end{proof}
Let $\mathcal{F}^N_{F,C^r}$ denote the collection of all $C^r$-smooth $[0,1]^N$-families in $\mathcal{F}(\Sigma)$ with $F$ at $0\in[0,1]^N$ for any $N>0$. Similarly, we have $\mathcal{F}^{s,N}_{F,C^r}$ and $\mathcal{M}^N_{g,C^r}$ for symmetric Finsler and Riemannian metrics. For any $\{F_{\tau}\}_{\tau\in[0,1]^N}$ in $\mathcal{F}^N_{F,C^r}$, we obtain a corresponding $[0,1]^N$-family of contact forms $e^{f_\tau}\lambda_F$ in $\Lambda^{Fin}_F$, where $f_{\tau}=-\ln F^*_{\tau}$, due to Proposition \ref{prop: properties of Finsler geodesic and Reeb flows}-(1). By employing the strategy presented in Lemma 3.2 of \cite{Irie2018} within the framework of Finsler and Riemannian setting, we conclude the following result.

\begin{thm}\label{lem: genericity of nondegenercy 2}
Given any $F\in\mathcal{F}(\Sigma)$ and $N>0$ sufficiently large. For generic $F_{[0,1]^N}\in \mathcal{F}^N_{F,C^\infty}$, we have a full measure subset $\{\tau\in[0,1]^N: F_\tau\ \mathrm{is\ bumpy}\}\subset[0,1]^N$. If $F\in \mathcal{F}^s(\Sigma)$ or $F^2=g\in\mathcal{M}(\Sigma)$, a similar statement holds for generic $F_{[0,1]^N}\in \mathcal{F}^{s,N}_{F,C^\infty}$ or generic $g_{[0,1]^N}\in\mathcal{M}^{N}_{g,C^\infty}$.
\end{thm}

Before we prove this theorem, we need the following lemmas. Define a $C^{r}$-map
$$\begin{aligned}
E:\mathcal{F}^N_{F,C^r}\times S^*_F\Sigma\times [0,1]^N\times \mathbb{R}_+&\rightarrow S^*_F\Sigma\times S^*_F\Sigma,\\
\big(F_{[0,1]^N},(q_0,p_0),\tau,l\big)&\mapsto \big((q_0,p_0),(q_\tau(l),p_\tau(l))\big),
\end{aligned}$$
where $(q_\tau(l),p_\tau(l))=\varphi^l_{F_\tau^*}(q_0,p_0)$, i.e. the $l$-image of the Hamiltonian flow $\varphi^t_{F_\tau^*}$. Similarly, we have $C^{r}$-maps $E^s$ and $E^{R}$ by replacing $\mathcal{F}^N_{F,C^r}$ to $\mathcal{F}^{s,N}_{F,C^r}$ and $\mathcal{M}^N_{g,C^r}$, respectively. Let $\Delta_F$ be the diagonal of $S^*_F\Sigma\times S^*_F\Sigma$ and let $\mathrm{CG}^F_p$ be the set of closed geodesics on $\Sigma$ with minimal period. Let $\mathrm{Dom}E$ be the domain of map $E$. Define an open subset in $\mathrm{Dom}E$ as
\begin{equation}\label{equ: open set O}
\begin{aligned}
\mathcal{O}:=&\big\{(F_{[0,1]^N},(q_0,p_0),\tau,l)\in\mathrm{Dom}E:\\
&\quad(q_\tau(l_1),p_\tau(l_1))\neq(q_\tau(l_2),p_\tau(l_2)),\forall l_1\neq l_2\ \mathrm{in}\ [0,\frac{l}{2}]\big\}.
\end{aligned}
\end{equation}
and define $\mathcal{O}^s\subset \mathrm{Dom}E^s$ and $\mathcal{O}^{R}\subset \mathrm{Dom}E^R$, similarly. Denote the restricted preimage as
\begin{equation*}
\begin{aligned}
\mathrm{CG}_p(\mathcal{F}^N_{F,C^r}):= E|_{\mathcal{O}}^{-1}(\Delta_F)&=\left\{(F_{[0,1]^N},(q_0,p_0),\tau,l)\in\mathcal{O}:\ (q_0,p_0)=(q_\tau(l),p_\tau(l))\right\}\\
&\cong\left\{(F_{[0,1]^N},\tau,\sigma): F_{[0,1]^N}\in\mathcal{F}^N_{F,C^r}, \tau\in[0,1]^N, \sigma\in \mathrm{CG}^{F_\tau}_p\right\}.
\end{aligned}
\end{equation*}
Let $\Delta_g$ denote the diagonal of $S^*_g\Sigma\times S^*_g\Sigma$. To understand $\mathrm{CG}_p(\mathcal{F}^N_{F,C^r})$, $\mathrm{CG}_p(\mathcal{F}^{s,N}_{F,C^r}):=E^s|_{\mathcal{O}^s}^{-1}(\Delta_F)$ and $ \mathrm{CG}_p(\mathcal{M}^{N}_{g,C^r}):=E^R|_{\mathcal{O}^R}^{-1}(\Delta_g)$, we provide the following lemma.
\begin{lem}\label{claim 1}
$E|_{\mathcal{O}}$, $E^s|_{\mathcal{O}^s}$ and $E^R|_{\mathcal{O}^R}$ are all transverse to $\Delta_F$.
\end{lem}
\begin{proof}
We first establish that $E|_{\mathcal{O}}$ is transversal to $\Delta_F$. Let $\mathrm{Im}E$ be the image of map $E$. For every $(x_0,x_0)\in\mathrm{Im}E|_{\mathcal{O}}\cap \Delta_F$, we have $x_0=\varphi^l_{F_\tau^*}(x_0)$. Then $T_{(x_0,x_0)}\Delta_F=\{(v,v): \forall v\in T_{x_0}S^*_F\Sigma\}$, and
\begin{equation}\label{equ: tangent space of ImE}
\begin{aligned}
T_{(x_0,x_0)}\mathrm{Im}(E|_{\mathcal{O}})&=\left\{\big(v,d\varphi^l_{F_\tau^*}(v)\big|_{x_0}\big):\ \forall v\in T_{x_0}S^*_F\Sigma\right\}+\left\{\big(0,\frac{d}{dt}\varphi^t_{F_\tau^*}(x_0)\big|_{t=l}\big)\right\}\\
&+\left\{\big(0,\frac{d}{ds}\varphi^l_{(F^s_\tau)^*}(x_0)\big|_{s=0}\big):\ \forall \big\{F^s_{[0,1]^N}\big\}_{s\in(-\epsilon,\epsilon)}\subset \mathcal{F}^N_{F,C^r}\right\}\\
&+\left\{\big(0,\frac{d}{d\tau_i}\varphi^l_{F_\tau^*}(x_0)\big|_{\tau_i=0}\big):\ \forall i\in\{1,\cdots, N\}\right\}.
\end{aligned}
\end{equation}
By identifying $(S^*_F\Sigma,e^{f_\tau}\lambda_F)$ to $(S^*_{F_\tau}\Sigma,\lambda_{F_\tau})$ with $f_\tau=-\ln F_\tau^*$, we obtain a Reeb orbit $x_0(\mathbb{R})$ in $(S^*_{F_\tau}\Sigma,\lambda_{F_\tau})$ starting from $x_0$ with minimal period $l$. The projection $\pi\circ x_0(\mathbb R)$ then becomes a closed geodesic on $(\Sigma,F_\tau)$. Let $U_0$ be an open small tubular neighborhood of $x_0(\mathbb{R})$ with local coordinates $(q_\tau,p_\tau)$. Choose a small open neighborhood $U_{0,1}\subset U_0$ so that
\begin{equation}\label{equ: definition of U01}
x_0|_{(l/2-\epsilon,l/2+\epsilon)}\subset U_{0,1},\quad (l/2-\epsilon,l/2+\epsilon)\subset \mathrm{reg}(\pi\circ x_0(\mathbb{R})).
\end{equation}
Then we consider a path $\lambda^{s}_{\tau}:=e^{f_s}\lambda_{F_\tau}\in \Lambda^{Fin}_{F_\tau},s\in(-\epsilon,\epsilon)$,
where $f_s=-\frac{1}{2}\ln\left(1+s(A(q_\tau)p_\tau,p_\tau)\right)\in \mathcal{F}_{\pi(U_{0,1})}^{pert}$ and $A(\cdot)$ supports in $\pi(U_{0,1})$ under the local coordinates $(q_\tau,p_\tau)$. The initial derivative of $f_s$ is $h(q_\tau,p_\tau)=\partial_s f_s(q_\tau,p_\tau)\big|_{s=0}=-\frac{1}{2}(A(q_\tau)p_\tau,p_\tau)$.
Under the local representation (\ref{equ: local representation of tilde A}), we choose $A(\cdot)$ as
\begin{equation}\label{equ: choice of tilde A in U01}
\tilde A(q_\tau(t_1,t_2))=\begin{pmatrix}b_1(t_2)&-b_3(t_2)\\-b_3(t_2)&2b_2(t_2)t_1\end{pmatrix},\quad b_{1,2,3}\in C^\infty_0\big((l/2-\epsilon,l/2+\epsilon),\mathbb{R}\big).
\end{equation}
For every $t\in \big(l/2-\epsilon,l/2+\epsilon\big)$, we have
\begin{equation}\label{equ: h of x_0}
h(x_0(t))=-\frac{1}{2}\left(A(q_\tau(t))p_\tau(t),p_\tau(t)\right)=-b_2(t)t_1\big|_{t_1=0}=0,
\end{equation}
where $x_0(t)=(q_\tau(t),p_\tau(t))$ in $U_{0,1}$. Moreover, for any $t\notin(l/2-\epsilon,l/2+\epsilon)$, we have $h(x_0(t))=0$. Under the symplectic basis $\{V^\tau_{t_1,t_2},W^\tau_{t_1,t_2}\}$ of $(\xi_{F_\tau},d\lambda_{F_\tau})$, see (\ref{equ: basis bar V and bar W}), we obtain from (\ref{equ: gradient of h}) that
\begin{equation}\label{equ: X dh is}
\nabla h(x_0(t))|_{\xi_{F_\tau}}=(b_3(t),-b_2(t)),\quad X_{h}|_{x_0(t)}=(b_2(t),b_3(t)),\quad \forall t\in\big(l/2-\epsilon,l/2+\epsilon\big),
\end{equation}
where $\iota_{X_{h}}d\lambda_{F_\tau}=-dh$ on $\xi_{F_\tau}$.

Now consider the Reeb vector field $\mathcal{R}_s=e^{-f_s}(\mathcal{R}_0-X_{f_s})$ of $\lambda^s_{\tau},\forall s\in(-\epsilon,\epsilon)$, where $X_{f_s}\in\xi_{F_\tau}$ is uniquely determined by $\iota_{X_{f_s}}d\lambda_{F_\tau}=-df_s$  on $\xi_{F_\tau}$. 
Let $x_s(t):=\varphi^t_{\lambda^s_{\tau}}(x_0)$ solving
\begin{equation}\label{equ: Reeb flow of lambda s}
\dot x_s(t)=\mathcal{R}_s(x_s(t))=e^{-f_s(x_s(t))}(\mathcal{R}_0(x_s(t))-X_{f_s}(x_s(t))),\quad \forall s\in (-\epsilon,\epsilon).
\end{equation}
By differentiating (\ref{equ: Reeb flow of lambda s}) in $s$ at $s=0$, then using $f_0=0$, $X_{f_0}=0$ and
\begin{equation}\label{equ: X of dh}
\iota_{X_{h}}d\lambda_{F_\tau}=-\frac{\partial df_s}{\partial s}\big|_{s=0}=-dh,\quad X_{h}:=\frac{\partial X_{f_s}}{\partial s}\big|_{s=0}\in \xi_{F_\tau},
\end{equation}
we obtain that
\begin{equation}\label{equ: surjection 1}
\dot\eta(t)=-h(x_0(t))\mathcal{R}_0(x_0(t))-X_{h}(x_0(t))+\nabla \mathcal{R}_0(\eta(t)),\quad \eta(t):=\frac{\partial x_s(t)}{\partial s}\big|_{s=0}.
\end{equation}
In the Martinet tube $\Phi_\tau:\mathbb R/\mathbb Z\times D^2_\epsilon\rightarrow U_0 ((z,x_1,x_2)\mapsto (q_\tau,p_\tau))$, we have $\Phi_{\tau}^*\lambda_{F_\tau}=H_\tau(dz+x_1dx_2)$ and $x_0(t)=\Phi_\tau(t/l,0,0)$, where
\begin{equation}\label{equ: properties of H tau}
H_\tau|_{\Phi_\tau^{-1}(U_{0,1})}\equiv l,\quad H_\tau|_{\mathbb R/\mathbb Z\times\{0\}}=l,\quad dH_\tau|_{\mathbb R/\mathbb Z\times\{0\}}=0.
\end{equation}
Under the frame $\{d\Phi_\tau\partial_{x_1},d\Phi_\tau\partial_{x_2}\}$ of $\xi|_{x_0(\mathbb R)}$, we can further rewrite $X_{h}|_{x_0(t)}=(\tilde b_2(t),\tilde b_3(t))$, where $\tilde b_{2},\tilde b_{3}\in C^\infty_0((l/2-\epsilon,l/2+\epsilon),\mathbb{R})$.

For convenience, we identify $(U_0,\lambda_{F_\tau})$ and $(\mathbb R/\mathbb Z\times D^2_\epsilon,H_\tau(dz+x_1dx_2))$. As (\ref{equ: formula of reeb vf}), we can rewrite $\mathcal{R}_0=H_\tau^{-2}(H_\tau+x_1H_{\tau,x_1}',H_{\tau,x_2}'-x_1H_{\tau,z}',-H_{\tau,x_1}')$. $\eta$ becomes a vector field along $\mathbb R/\mathbb Z\times\{0\}$. Using \eqref{equ: properties of H tau}, we compute
\begin{equation}\label{equ: nabla R0}
\begin{aligned}
\nabla \mathcal{R}_0\big|_{(t/l,0,0)}(\eta(t))&=l^{-2}\left(0,d(H_{\tau,x_2}')|_{(t/l,0,0)}\eta(t),-d(H_{\tau,x_1}')|_{(t/l,0,0)}\eta(t)\right)\\
&=l^{-2}\left(0,H_{\tau,x_2x_1}''\eta_{x_1}+H_{\tau,x_2x_2}''\eta_{x_2},-H_{\tau,x_1x_1}''\eta_{x_1}-H_{\tau,x_1x_2}''\eta_{x_2}\right)|_{(t/l,0,0)}(t)\\
&=l^{-2}(0,(-J\nabla^2_{x}H_\tau(t/l,0,0)\eta_{x}(t))^T),\quad \eta_x(t):=(\eta_{x_1},\eta_{x_2})^T(t).
\end{aligned}
\end{equation}
Then the expressions (\ref{equ: h of x_0}), (\ref{equ: X dh is}) and (\ref{equ: nabla R0}) enable us to rephrase (\ref{equ: surjection 1}) as
$$\dot{\eta}_z(t)=0 \quad \text{and}\quad \dot{\eta}_x(t)=-\tilde b(t)-l^{-2}J\nabla^2_{x}H_\tau(t/l,0,0){\eta}_{x}(t),\quad \tilde b(t):=(\tilde b_2,\tilde b_3)^T(t).$$
Since $\eta(0)=0$, we solve that
\begin{equation}\label{equ: formula of eta x}
\eta_z(t)=0,\quad \eta_x(t)=-R_{\tau}(t)\int^t_0R_\tau(s)^{-1}\tilde b(s)ds,
\end{equation}
where $R_\tau$ solves
$\dot R_\tau(t)=-l^{-2}J\nabla^2_{x}H_\tau(t/l,0,0)R_\tau(t)$ and $R_\tau(0)=I_2$.
Moreover, (\ref{equ: properties of H tau}) implies that
$$\nabla^2_xH_\tau(t/l,0,0)=0,\quad R_\tau(t)=R_\tau(l/2),\quad \forall t\in(l/2-\epsilon,l/2+\epsilon),$$
then (\ref{equ: formula of eta x}) and  implies that $\eta_z(l)=0$ and
\begin{equation}\label{equ: tilde eta x}
\eta_x(l)=-R_{\tau}(l)R_\tau(l/2)^{-1}\int^{l/2+\epsilon}_{l/2-\epsilon}\tilde b(s)ds.
\end{equation}
The arbitrariness of $\tilde b$ implies that $\eta_x(l)$ can be any element in $\xi_{F_\tau}|_{x_0}$. Now we define $(F^s_\tau)^*(q_\tau,p_\tau)=\big(F_\tau(q_\tau,p_\tau)^2+s(A(q_\tau)p_\tau,p_\tau)\big)^{\frac{1}{2}}$ for every $(q_\tau,p_\tau)\in T^*\pi(U_{0,1})$ and $(F^s_\tau)^*=F^*_\tau$ elsewhere, where $A(\cdot)$ is described in (\ref{equ: choice of tilde A in U01}). Due to Proposition \ref{prop: properties of Finsler geodesic and Reeb flows}, we have $\frac{d}{ds}\varphi^l_{(F^s_\tau)^*}(x_0)\big|_{s=0}=\frac{d}{ds}\varphi^l_{\lambda^s_\tau}(x_0)\big|_{s=0}=\eta(l)=(\eta_z,\eta_x)(l)$.
Finally, by (\ref{equ: tangent space of ImE}) and the flexibility of $\eta_x(l)$, we obtain that $d(E|_\mathcal{O})|_{(x_0,x_0)}$ is surjective and
$$T_{(x_0,x_0)}\mathrm{Im}(E|_{\mathcal{O}})+T_{(x_0,x_0)}\Delta_F=T_{x_0}S^*_F\Sigma\times T_{x_0}S^*_F\Sigma.$$

If $F_\tau\in \mathcal F^s(\Sigma)$ or $g_\tau=F_\tau^2\in\mathcal{M}(\Sigma)$, then the above variation $F_\tau^s$ still belongs to the same class. By incorporating Proposition \ref{prop: properties of Finsler geodesic and Reeb flows} or Proposition \ref{prop: properties of Riemannian geodesic and Reeb flows} and the aforementioned arguments, we can prove the surjectivity of $E^s|_{\mathcal{O}^s}$ (resp. $E^R|_{\mathcal{O}^R}$), as well as the transversality between $E^s|_{\mathcal{O}^s}$ (resp. $E^R|_{\mathcal{O}^R}$) and $\Delta_F$ (resp. $\Delta_g$). Hence, the proof is completed.
\end{proof}
As a conclusion of Lemma \ref{claim 1} and Lemma \ref{lem: Appendix Sard-Smale theorem 2}, we conclude that $\mathrm{CG}_p(\mathcal{F}^N_{F,C^r})$ is a $C^{r}$-submanifold of $\mathrm{Dom}E$ with codimension $3$. Since the projection from $\mathrm{Dom}E$ to $\mathcal{F}^N_{F,C^r}$ is Fredholm with index $N+4$,
and each Finsler surface admits a closed geodesic, then its $3$-codimensional restriction
\begin{equation}\label{equ: def of P1}
P_1:\mathrm{CG}_{p}(\mathcal{F}^N_{F,C^r})\rightarrow \mathcal{F}^N_{F,C^r}\ \big((F_{[0,1]^N},\tau,\sigma)\mapsto F_{[0,1]^N}\big)
\end{equation}
is surjective with Fredholm index $N+1$.

Now, we consider a $\mathrm{Sp}(2)$-bundle $\mathcal{E}\rightarrow \mathrm{CG}_{p}(\mathcal{F}^N_{F,C^r})$ with fiber $$\mathcal{E}_{(F_{[0,1]^N},\tau,\sigma)}=\mathrm{Sp}(\xi_{\gamma_\sigma(0)},d\lambda_{F_\tau})\cong \mathrm{Sp}(2).$$
We also denote by $\mathcal{E}^s,\mathcal{E}^R$ the subbundles of $\mathcal{E}$ over $\mathrm{CG}_{p}(\mathcal{F}^{s,N}_{F,C^r})$, $\mathrm{CG}_{p}(\mathcal{M}^N_{g,C^r})$. According to Theorem~1.7.3 of \cite{Long2002}, any element $M\in\mathrm{Sp}(2)$ is symplectically similar to one of the following matrices
$$N(\pm 1,b)=\begin{pmatrix}\pm 1& b\\0 & \pm 1\end{pmatrix},\ D(a)=\begin{pmatrix}a & 0\\ 0 & a^{-1}\end{pmatrix},\ R(\theta)=\begin{pmatrix}\cos\theta&-\sin\theta\\ \sin\theta & \cos\theta\end{pmatrix},$$
where $b\in\{0,\pm1\},a\in\mathbb{R}\setminus\{0,\pm1\}$ and $\theta\in(0,\pi)\cup(\pi,2\pi)$.
Denote $\mathrm{Sp}_{\pm 1,b}(2)$, $\mathrm{Sp}_a(2)$ and $\mathrm{Sp}_\theta(2)$ as the collection of $M\in\mathrm{Sp}(2)$ associating to $N(\pm 1,b)$, $D(a)$ and $R(\theta)$, respectively. $\mathrm{Sp}_{\pm1,0}(2)$ only include one point, $\mathrm{Sp}_{\pm 1,b\neq 0}(2)$, $\mathrm{Sp}_a(2)$ and $\mathrm{Sp}_\theta(2)$ are 2-dimensional submanifolds of $\mathrm{Sp}(2)$. Denote $\mathcal{E}_{\pm1,b}$, $\mathcal{E}_a$ and $\mathcal{E}_\theta$ as the subbundles over $\mathrm{CG}_{p}(\mathcal{F}^N_{F,C^r})$ with fiber $\mathrm{Sp}_{\pm 1,b}(2)$, $\mathrm{Sp}_a(2)$ and $\mathrm{Sp}_{\theta}(2)$. Moreover, the linear Poincar\'e map induces a $C^{r-1}$-section of $\mathcal{E}$:
$$\Gamma: \mathrm{CG}_{p}(\mathcal{F}^N_{F,C^r})\rightarrow \mathcal{E},\ (F_{[0,1]^N},\tau,\sigma)\mapsto P_{\sigma}\in \mathrm{Sp}(\xi_{F}|_{\gamma_\sigma(0)},d\lambda_{F_\tau}).$$
The projection of the intersection $\mathrm{Im}\Gamma\cap \mathcal{E}_{*}$ is defined as
$$\mathrm{CG}_{p,*}(\mathcal{F}^N_{F,C^r}):=\left\{(F_{[0,1]^N},\tau,\sigma)\in \mathrm{CG}_{p}(\mathcal{F}^N_{F,C^r}): P_{\sigma}\in \mathrm{Sp}_{*}(2)\right\},$$
for $*=(\pm1,b),a,\theta$, respectively.

Similarly, we obtain $\mathcal{E}^s_*$, $\mathcal{E}^R_*$ for $*=(\pm1,b),a,\theta$ and the restricted sections $\Gamma^s$ and $\Gamma^R$ of $\Gamma$ on subbundles $\mathcal{E}^s$ and $\mathcal{E}^R$. The sets $\mathrm{CG}_{p,*}(\mathcal{F}^{s,N}_{F,C^r})$, $\mathrm{CG}_{p,*}(\mathcal{M}^N_{g,C^r})$ denote the projection of $\mathrm{Im}\Gamma^s\cap \mathcal{E}^s$ and $\mathrm{Im}\Gamma^R\cap \mathcal{E}^R$. To further understand $\mathrm{CG}_{p,*}(\mathcal{F}^N_{F,C^r})$, $\mathrm{CG}_{p,*}(\mathcal{F}^{s,N}_{F,C^r})$ and $\mathrm{CG}_{p,*}(\mathcal{M}^N_{g,C^r})$, we establish the following lemma.
\begin{lem}\label{claim 2}
The section $\Gamma$ is transverse to $\mathcal{E}_*$ in $\mathcal{E}$ for every $*\in\{(\pm1,b),a,\theta\}$ and every $b\in\{0,\pm 1\}$, $a\in\mathbb{R}\setminus\{0,\pm1\}$ and $\theta\in(0,\pi)\cup(\pi,2\pi)$. Moreover, a similar statement holds for $\Gamma^s,\mathcal{E}^s_*$ in $\mathcal{E}^s$ and for $\Gamma^s,\mathcal{E}^R_*$ in $\mathcal{E}^R$, respectively.
\end{lem}
\begin{proof}
Indeed, without loss of generality, let $(F_{[0,1]^N},\tau,\sigma)$ be an arbitrary element in $\mathrm{Im}\Gamma\cap \mathcal{E}_{\theta}$ with $P_{\sigma}\in \mathrm{Sp}_\theta(2)$. Perturb $F_\tau$ to $F^s_\tau$ with $(F^s_\tau)^*(q,p)=(F^*_{\tau}(q,p)^2+s(A_\tau(q)p,p))^{1/2},s\in (-\epsilon,\epsilon)$ as \eqref{equ: variation Fs}, where $A_\tau(\cdot)$ is supported in a small open neighborhood $\pi(U_1)$ of $\sigma(l_{\sigma}/2)\in \mathrm{reg}(\sigma)$ with the form
\begin{equation}\label{equ: choice of tilde A on U1}
\tilde A_\tau(\sigma_{t_1}(t_2))=\begin{pmatrix}a_{1}(t_2)&a_{2}(t_2)t_1\\a_{2}(t_2)t_1& a_{3}(t_2)t_1^2\end{pmatrix},\quad a_{1,2,3}\in C^\infty_0((l_\sigma/2-\epsilon,l_\sigma/2+\epsilon),\mathbb{R}),
\end{equation}
under the representation (\ref{equ: local representation of tilde A}). By Lemma \ref{lem: differential of P_gamma(f)} and the flexibility of $a_{1,2,3}$, we know that the differentiation of linear Poincar\'e map $P_{\sigma}(\cdot)$ is surjective at $0$. Therefore, one can perturb the family $F_{[0,1]^N}$ to $F_{[0,1]^N}^s$ within $\mathcal{F}^N_{F,C^r}$ near $F_\tau$ such that $\sigma\in\mathrm{CG}^{F^s_\tau}$ and $P_\sigma$ varies in an arbitrary direction in $\mathrm{Sp}(2)$. Therefore, $T_{P_\sigma}\mathrm{Im}\Gamma$ includes the whole fiber direction in $T_{P_\sigma}(\mathcal E|_{\sigma})$. Then we have
$$T_{P_\sigma}\mathrm{Im}\Gamma+T_{P_\sigma}\mathcal{E}_\theta=T_{P_\sigma}\mathrm{Im}\Gamma+T_{P_\sigma}\mathrm{CG}_p(\mathcal{F}^N_{F,C^r})=T_{P_\sigma}\mathcal{E}.$$
Hence, $\Gamma$ is transverse to $\mathcal{E}_{\theta}$ in $\mathcal E$ for every $\theta\in(0,\pi)\cup(\pi,2\pi)$. Similarly, $\Gamma$ is also transverse to $\mathcal{E}_{\pm1, b}$ and $\mathcal{E}_a$ in $\mathcal E$ for every $b\in\{0,\pm1\}$ and $a\in\mathbb{R}\setminus\{0,\pm 1\}$.

Moreover, whenever $F_\tau$ belongs to $\mathcal{F}^s(\Sigma)$ or $g_\tau=F_\tau^2\in\mathcal{M}(\Sigma)$, the previous variation $F^s_\tau$ is also in $\mathcal{F}^s(\Sigma)$ or $g_\tau^s=(F^s_\tau)^2\in\mathcal{M}(\Sigma),\forall s\in(-\epsilon,\epsilon)$. Based on Lemma \ref{lem: differential of P_gamma(f)} and the preceding arguments, the surjectivity of $dP_\sigma(\cdot)$ remains valid. Therefore, we maintain the transversality between $\Gamma^s$ and $\mathcal{E}^s_*$ in $\mathcal E^s$, as well as $\Gamma^R$ and $\mathcal{E}^R_*$ in $\mathcal E^R$. Then Lemma \ref{claim 2} holds.
\end{proof}
According to Lemma \ref{claim 2} and Lemma \ref{lem: Appendix Sard-Smale theorem 2}, we have $\mathrm{CG}_{p,*}(\mathcal{F}^N_{F,C^r})=\Gamma^{-1}(\mathcal{E}_{*})$ is a $C^{r-1}$-submanifold of $\mathrm{CG}_{p}(\mathcal{F}^N_{F,C^r})$ with codimension $\geq 1$. Since $P_1$ has index $N+1$, see \eqref{equ: def of P1}, the restricted map
$$P_{1,*}:\mathrm{CG}_{p,*}(\mathcal{F}^N_{F,C^r})\rightarrow \mathcal{F}^N_{F,C^r}\ \big((F_{[0,1]^N},\tau,\sigma)\mapsto F_{[0,1]^N}\big),\quad *\in\{(\pm 1,b),a,\theta\},$$
is $C^{r-1}$ Fredholm with index $\mathrm{Ind}P_{1,*}\leq N$.
Choose an $r\geq N+1$. Using Sard-Smale theorem, see Lemma \ref{lem: Appendix Sard-Smale theorem}, we see that the regular values of $P_{1,*}$ form a residual subset for every $*\in\{(\pm 1,b),a,\theta\}$. Therefore, the numerable intersection
\begin{equation}\label{equ: nondegenerate metrics}
\mathcal{F}^{N,\mathbf{b}}_{F,C^r}:=\bigcap_{b\in\{0,\pm1\}}\mathrm{reg}P_{1,\pm 1,b}\cap\bigcap_{\theta\in \mathbb{Q}\pi}\mathrm{reg}P_{1,\theta}
\end{equation}
is also a residual set in $\mathcal{F}^N_{F,C^r}$.

Similarly, since $\Gamma^s$ is transverse to $\mathcal{E}^s_*$ in $\mathcal{E}^s$ and $\Gamma^R$ is transverse to $\mathcal{E}^R_*$ in $\mathcal{E}^R$, we conclude that
$$\mathrm{CG}_{p,*}(\mathcal{F}^{s,N}_{F,C^r}):=\Gamma^{-1}(\mathcal{E}^s_*)\quad \text{and}\quad \mathrm{CG}_{p,*}(\mathcal{M}^{N}_{g,C^r}):=\Gamma^{-1}(\mathcal{E}^R_*)$$
are $C^{r-1}$-submanifold of $\mathrm{CG}_{p}(\mathcal{F}^{s,N}_{F,C^r})$ and $\mathrm{CG}_{p}(\mathcal{M}^N_{g,C^r})$. Moreover, since $P_1^s:\mathrm{CG}_{p}(\mathcal{F}^{s,N}_{F,C^r})\rightarrow \mathcal{F}^{s,N}_{F,C^r}$ and $P_1^R:\mathrm{CG}_{p}(\mathcal{M}^{N}_{g,C^r})\rightarrow \mathcal{M}^{N}_{g,C^r}$ remain index $N+1$, as described in \eqref{equ: def of P1}, the restricted maps
$$P^s_{1,*}:\mathrm{CG}_{p,*}(\mathcal{F}^{s,N}_{F,C^r})\rightarrow \mathcal{F}^{s,N}_{F,C^r}\quad \text{and}\quad P^R_{1,*}:\mathrm{CG}_{p,*}(\mathcal{M}^{N}_{g,C^r})\rightarrow \mathcal{M}^{N}_{g,C^r}$$
are $C^{r-1}$ Fredholm with index $\mathrm{Ind}P^s_{1,*}\leq N$ and $\mathrm{Ind}P^R_{1,*}\leq N$. By taking an $r\geq N+1$ and applying Lemma \ref{lem: Appendix Sard-Smale theorem} again, the regular value set of $P^s_{1,*}$ (resp. $P^R_{1,*}$) are of residual. By taking the intersection of numerical regular sets as (\ref{equ: nondegenerate metrics}), we obtain the following residual subsets:
$$\mathcal{F}^{s,N,\mathbf{b}}_{F,C^r}\subset\mathcal{F}^{s,N}_{F,C^r}, \quad  \mathcal{M}^{N,\mathbf{b}}_{g,C^r}\subset \mathcal{M}^{N}_{g,C^r}.$$
For $\mathcal{F}^{s,N,\mathbf{b}}_{F,C^r}$, $\mathcal{F}^{s,N,\mathbf{b}}_{F,C^r}$ and $\mathcal{F}^{s,N,\mathbf{b}}_{F,C^r}$, we have the following result.

\begin{lem}\label{claim 3}
For every $F_{[0,1]^N}\in \mathcal{F}^{N,\mathbf{b}}_{F,C^r}$ with $r\in \mathbb Z_+$ large enough, we have
$$\{\tau\in[0,1]^N: F_\tau\ \mathrm{is\ bumpy}\}\ \mathrm{has\ full\ measure\ in}\ [0,1]^N.$$
The same conclusion holds for $\mathcal{F}^{s,N,\mathbf{b}}_{F,C^r}$ and $\mathcal{M}^{N,\mathbf{b}}_{g,C^r}$.
\end{lem}
\begin{proof}
Let $F_{[0,1]^N}\in\mathcal{F}^{N,\mathbf{b}}_{F,C^r}$.  Since $F_{[0,1]^N}$ is a regular value of $P_{1,*}$, $dP_{1,*}$ is surjective at $F_{[0,1]^N}$, as described in section \ref{sec: Sard-smale theorem}. This implies that $\mathrm{Ind}P_{1,*}\leq N$ for every $*=(\pm 1, b)|_{b\in\{0,\pm 1\}}$ and $\theta\in\mathbb{Q}\pi$. Applying Lemma \ref{lem: Appendix Sard-Smale theorem 2}, we conclude that
$$\mathrm{CG}_{p,*}(F_{[0,1]^N}):=P_{1,*}^{-1}(F_{[0,1]^N}),\quad *=(\pm 1, b)|_{b\in\{0,\pm 1\}},\theta\in\mathbb{Q}\pi,$$
are $C^{r-1}$ submanifolds of $\mathrm{CG}_{p,*}(\mathcal{F}^N_{F,C^r})$ with dimension $\leq N$. Consider the projection
$$P_{2,*}(F_{[0,1]^N}):\mathrm{CG}_{p,*}(F_{[0,1]^N})\rightarrow [0,1]^N\ \big((F_{[0,1]^N},\tau,\sigma)\mapsto \tau\big).$$
There is a natural time-shifting $\mathbb R/l_\sigma\mathbb Z$-action
$$c:\mathrm{CG}_{p,*}(F_{[0,1]^N})\rightarrow \mathrm{CG}_{p,*}(F_{[0,1]^N})\ \big((F_{[0,1]^N},\tau,\sigma)\mapsto (F_{[0,1]^N},\tau,c\cdot\sigma)\big),$$
which is invariant under the projection $P_{2,*}(F_{[0,1]^N})$ for every $*\in\{(\pm 1, b)|_{b\in\{0,\pm 1\}},\theta\in\mathbb{Q}\pi\}$. Then $P_{2,*}(F_{[0,1]^N})$ is not immersed anywhere. Choose $r\in \mathbb Z_+$ large enough then applying Sard theorem (Lemma \ref{lem: Appendix Sard theorem}), we conclude that the critical values of $P_{2,*}(F_{[0,1]^N})$ form a measure zero subset in $[0,1]^N$. Since $dP_{2,*}(F_{[0,1]^N})$ never surjective due to the time translation, for every $\tau$ in the regular set of $P_{2,*}(F_{[0,1]^N})$, we must have $\tau\notin \mathrm{Im}P_{2,*}(F_{[0,1]^N})$. Therefore, we obtain a full measure subset
$$\Omega(F_{[0,1]^N}):= \bigcap_{*\in\{(\pm1,b)|_{b\in\{0,\pm1\}}, \theta\in\mathbb{Q}\pi\}}\mathrm{reg}P_{2,*}(F_{[0,1]^N})\subset [0,1]^N.$$
For each $t\in \Omega(F_{[0,1]^N})$,  $F_\tau$ is bumpy, i.e.
$$\mathrm{CG}^{F_\tau}\cap\mathrm{CG}_{p,*}(F_{[0,1]^N})=\emptyset,\quad *\in\{(\pm 1, b)|_{b\in\{0,\pm1\}},\theta\in\mathbb{Q}\pi\}.$$
Hence, the proof for $\mathcal{F}^{N,\mathbf{b}}_{F,C^r}$ is done. By following the similar arguments above, the same conclusion holds for $F_{[0,1]^N}\in\mathcal{F}^{s,N,\mathbf{b}}_{F,C^r}$ or $F_{[0,1]^N}\in\mathcal{M}^{N,\mathbf{b}}_{g,C^r}$. Hence, this lemma holds.
\end{proof}
\begin{proof}[Proof of Theorem \ref{lem: genericity of nondegenercy 2}]
Let $\mu$ be the Lebesgue measure in $[0,1]^N$. For any $L\gg 1$, we define
\begin{equation*}
\mathcal{F}^{N,\mathbf{b}}_{F,L}:=\left\{F_{[0,1]^N}\in \mathcal{F}^N_{F,C^\infty}:\ \mu\big\{\tau: \sigma\in\mathrm{CG}_p^{F_\tau}\ \mathrm{is\ nondegenerate\ if}\  l_\sigma\leq  L\big\}>1-L^{-1}\right\}.
\end{equation*}
We first establish that $\mathcal{F}^{N,\mathbf{b}}_{F,L}$ is open and dense in $\mathcal{F}^N_{F,C^\infty}$. Recall $\mathcal{F}^{N,\mathbf{b}}_{F,C^r}$ is a residual set in $\mathcal{F}^{N}_{F,C^\infty}$ for every $r>0$ sufficiently large. Then the countable intersection $\mathcal{F}^{N,\mathbf{b}}_{F,C^\infty}:=\bigcap_{r\geq 1}\mathcal{F}^{N,\mathbf{b}}_{F,C^r}$ becomes a dense subset in $\mathcal{F}^{N}_{F,C^\infty}$. By Lemma \ref{claim 3}, we know that $\mathcal{F}^{N,\mathbf{b}}_{F,C^\infty}\subset \mathcal{F}^{N,\mathbf{b}}_{F,L},\forall L\gg 1$ and this implies the denseness. To see the openness, it is sufficient to show that for every $F_{[0,1]^N}\in\mathcal{F}^{N,\mathbf{b}}_{F,L}$, all closed geodesics in $\mathrm{CG}_p^{F_\tau}$ with length $\leq L$ remains nondegenerate under a small perturbations of $F_{[0,1]^N}$. Indeed, the nondegenerate closed geodesics $\sigma\in \mathrm{CG}_p^{F_\tau}$ with $l_{F_\tau}(\sigma)\leq L$ are finitely many and isolated up to the canonical time translation. Under a sufficiently small perturbation of $F_\tau$, the nondegenercy of the closed geodesics in $l_{F_\tau}^{-1}(0,L]$ still remain. Moreover, there is no closed geodesic with length $\leq L$ arises, since the new created closed geodesics only have large periods. Hence, this implies the openness. Finally, the countable intersection $F^{N,\mathbf{b}}_{F,\infty}:=\bigcap_{L\geq 1}\mathcal{F}^{N,\mathbf{b}}_{F,L}$ becomes a residual set in $F^{N}_{F,C^\infty}$.

The proof for a generic $F_{[0,1]^N}\in \mathcal{F}^{s,N}_{F,C^\infty}$ or $g_{[0,1]^N}\in \mathcal{M}^{N}_{g,C^\infty}$ is analogous, as the aforementioned arguments and Lemma \ref{claim 3} hold in both $\mathcal{F}^{s,N,\mathbf{b}}_{F,C^r}$ and $\mathcal{M}^{N,\mathbf{b}}_{g,C^r}$. The proof is now completed.
\end{proof}

\section{Proof of main theorems} \label{sec: proof of main theorems}
In this section, we first prove Theorems \ref{thm: finsler} and \ref{thm: riemann}, which mainly concern symmetric Finsler and Riemannian metrics. These proofs combine the properties of ECH spectral invariants introduced in section \ref{sub: c_sigma of ECH}, together with Lemma \ref{lem: genericity of nondegenercy 1} and Theorem \ref{lem: genericity of nondegenercy 2} in section \ref{sec: generic nondegenercy of Rimannian metrics}. Consequently, we conclude Theorems \ref{thm: equidis finsler} and \ref{thm: equidis riemann}, by using Theorems \ref{thm: finsler} and \ref{thm: riemann}, along with the method of approximation and selection in \cite{MNS2017}. In conclusion, we establish the existence of an equidistributed sequence of closed geodesics for generic Finsler, symmetric Finsler and Riemannian metrics on every closed surface.

To establish the proof of Theorems \ref{thm: finsler} and \ref{thm: riemann}, we first introduce a result for Lipschitz functions by Marques, Neves and Song in \cite{MNS2017}, which has been applied in \cite{Irie2018}.
\begin{lem}[Marques-Neves-Song \cite{MNS2017}] \label{lem: lipschitz func}
Given a full measure subset $\mathcal{A}\subset[0,1]^N$. For every $\delta>0$ and $N\in\mathbb{N}_+$, there exists an $\epsilon=\epsilon(\delta,N)>0$ such that for every Lipschitz function $f:[0,1]^N\rightarrow \mathbb{R}$ with $\max f-\min f\leq 2\epsilon$, there exist $N+1$ many sequences $\{\tau^{j,m}\}_{m\geq 1}\subset \mathcal A$ satisfying
\begin{itemize}
\item[(1)] $\{\tau^{j,m}\}_{m\geq 1}$ converges to a point $\tau^\infty\in(0,1)^N$ for every $j=1,\cdots,N+1$.
\item[(2)] $f$ is differentiable at $\tau^{j,m}$ for every $m\geq 1$ and $j=1,\cdots,N+1$.
\item[(3)] $\{\nabla f(\tau^{j,m})\}_{m\geq 1}$ converges to some $v_j$ for every $j$, which satisfies
$$d_{\mathbb{R}^N}(0,\mathrm{conv}(v_1,\cdots,v_{N+1}))<\delta.$$
\end{itemize}
where $d_{\mathbb{R}^N}$ is the Euclidean distance in $\mathbb{R}^N$.
\end{lem}
Take a sequence of $\{\psi_i\}_{i\geq 1}$ in $C^\infty(\Sigma,\mathbb{R})$, which is a $C^0$-dense. We introduce the following lemma.
\begin{lem}\label{claim 4}
For any $N,\epsilon^{-1}>0$ sufficiently large, $F\in \mathcal F(\Sigma)$ and any open neighborhood $\mathcal{U}\subset \mathcal F(\Sigma)$ of $F$, there exists an $F'\in\mathcal{U}$ and a nondegenerate Reeb $1$-current $C\in \mathcal L(S^*_{F'}\Sigma,\lambda_{F'})$ so that
\begin{equation}\label{equ: condition of C}
\left|C(\psi_i\lambda_{F'})-\int_{S^*_{F'}\Sigma}\psi_i\lambda_{F'}\wedge d\lambda_{F'}\right|<\epsilon,\quad \forall i=1,\cdots,N.
\end{equation}
A similar statement holds for $F\in \mathcal F^s(\Sigma)$ or $\mathcal M(\Sigma)$ and open subset $\mathcal{U}\subset \mathcal F^s(\Sigma)$ or $\mathcal M(\Sigma)$.
\end{lem}
\begin{proof}
By Proposition \ref{prop: properties of Finsler geodesic and Reeb flows}-(1), $\mathcal{U}$ can be regarded as a neighborhood of $\lambda_{F}\in \Lambda^{Fin}_{F}$ for some $F\in \mathcal F(\Sigma)$. Let $\{\psi_i\}_{i\geq 1}$ be a $C^0$-dense sequence in $C^\infty(\Sigma,\mathbb{R})\subset C^\infty(S^*_{F}\Sigma,\mathbb{R})$. Let $c_{\varsigma_k}(S^*_{F}\Sigma,\lambda_{F})$ be the spectral invariant, see (\ref{equ: spectral invariant}), where $\{\varsigma_k\}_{k\geq1}$ is obtained from Proposition \ref{prop: properties of spectral invariant}-(5). Define
$$\gamma_k(\lambda_{F})=\frac{c_{\varsigma_k}(S^*_{F}\Sigma,\lambda_{F})}{\sqrt{2k}}
-\sqrt{\mathrm{vol}(S^*_{F}\Sigma,\lambda_{F})},\quad \forall k\geq 1.$$
as stated in Proposition \ref{prop: lipschitz property}. Choose a smooth $[0,1]^N$-family of contact forms in $\Lambda^{Fin}_{F}$ as
\begin{equation}\label{equ: formula of lambda tau}
\lambda_\tau=e^{f_\tau}\lambda_{F},\quad f_\tau=\frac{1}{\hat c}\sum_{i=1}^N\tau_i\psi_i\in C^\infty(\Sigma,\mathbb{R}),\quad \forall \tau\in[0,1]^N,\quad \hat c\gg0.
\end{equation}
Define $F_{[0,1]^N}\in\mathcal{F}^N_{F,C^\infty}$ as
$F_{\tau}:=e^{f_\tau}F$. From Proposition \ref{prop: properties of Finsler geodesic and Reeb flows}-(2), we have $(S^*_{F}\Sigma,\lambda_\tau)\cong(S^*_{F_\tau}\Sigma,\lambda_{F_\tau})$. Choose $\hat c\gg 0$ large enough so that $\lambda_{[0,1]^N}\subset\mathcal{U}$. According to Lemma \ref{lem: genericity of nondegenercy 2}, for every $\check c>0$ arbitrary small, there exists a perturbed $[0,1]^N$-family $\widetilde \lambda_{[0,1]^N}\subset \Lambda^{Fin}_{F}$ satisfying
\begin{itemize}
\item[(a)] $\mu\{\tau\in[0,1]^N:\widetilde \lambda_\tau\ \mathrm{is\ nondegenerate}\}=1$,
\item[(b)] $\|(\hat c/\widetilde\lambda_\tau)
    \partial_{\tau_i} \widetilde \lambda_\tau-\psi_i\|_{C^0}<\check c$.
\end{itemize}
Note that (b) follows from the relation $\partial_{\tau_i} \lambda_\tau=(\partial_{\tau_i} f_\tau)\lambda_\tau=(\psi_i/\hat c)\lambda_\tau$ and the fact that $\|\lambda_\tau-\widetilde \lambda_{\tau}\|_{C^\infty}$ admits an arbitrarily small uniform upper bound in $\tau$. By Proposition \ref{prop: lipschitz property}, we see that $f\mapsto \gamma_k(e^{f}\lambda_F)$ is a Lipschitz function in $C^0(S^*_{F}\Sigma,[-1,1])$. After expressing $\widetilde \lambda_\tau=e^{\widetilde f_\tau}\lambda_{F}$ on $\Lambda^{Fin}_{F}$, we obtain
\begin{equation}\label{equ: gamma k is lipschitz}
\big|\gamma_k(\widetilde\lambda_{\tau_1})- \gamma_k(\widetilde \lambda_{\tau_2})\big|\leq C\big\|\widetilde f_{\tau_1}-\widetilde f_{\tau_2}\big\|\leq C\max_{i,\tau}\big\|\partial_{\tau_i} \widetilde f_\tau\big\|_{C^0}\big|\tau_1-\tau_2\big|,\quad \forall k\geq 1,
\end{equation}
where $C$ only depends on $\lambda_{F}$. Therefore, the composition $\widetilde \gamma_k: [0,1]^N\rightarrow \mathbb{R}\ (\tau\mapsto \gamma_k(\widetilde \lambda_\tau))$ is Lipschitz for every $k\geq 1$. According to Proposition \ref{prop: properties of spectral invariant}-(5), we have $\displaystyle \lim_{k\rightarrow +\infty}\|\widetilde \gamma_k\|_{C^0}=0$.

Let $\widetilde{\mathcal{S}}$ be the full measure subset of $[0,1]^N$ in (a). By Lemma $\ref{lem: lipschitz func}$, we obtain an $\epsilon_0=\epsilon_0(\check c,N)>0$ such that for every $k>0$ sufficiently large so that $\max \widetilde\gamma_k-\min \widetilde\gamma_k\leq 2\epsilon_0$, there exist $N+1$ sequences $\{\tau^{j,m}\}_{m\geq 1},j=1,\cdots,N+1$, in $\widetilde{\mathcal{S}}$ such that
\begin{itemize}
\item[(c$_1$)] $\{\tau^{j,m}\}_{m\geq 1}$ converges to a point $\tau^\infty\in(0,1)^N$ for every $j=1,\cdots,N+1$.
\item[(c$_2$)] $\widetilde \gamma_k$ is differentiable at $\tau^{j,m}$ for every $m\geq 1$ and $j=1,\cdots,N+1$.
\item[(c$_3$)] $\{\nabla \widetilde \gamma_k(\tau^{j,m})\}_{m\geq 1}$ converges to some $v_j\in\mathbb{R}^N$ for every $j$, which satisfies $$d_{\mathbb{R}^N}\left(0,\mathrm{conv}(v_1,\cdots,v_{N+1})\right)<\check c.$$
\end{itemize}
Denote $\mathrm{vol}(\lambda):=\mathrm{vol}(S^*_{F}\Sigma,\lambda)$. From (c$_3$), we obtain a tuple of nonegative coefficients $\{a_j\}_{j=1}^{N+1}$ with $\sum_{j=1}^{N+1} a_j=1$ so that for every $i=1,\cdots,N$ and $m$ large enough, we compute
\begin{equation}\label{equ: linear combination smaller than check c}
\begin{aligned}
\check c&>\sum_{j=1}^{N+1} a_j \partial_{\tau_i} \widetilde\gamma_k(\tau^{j,m})=\sum_{j=1}^{N+1} a_j\left(\frac{\partial c_{\varsigma_k}\big(S^*_{F}\Sigma,\widetilde\lambda_{\tau^{j,m}}\big)}{\sqrt{2k}\ \partial \tau_i}-\frac{\int_{S^*_{F}\Sigma}\partial_{\tau_i} \widetilde\lambda_{\tau^{j,m}}\wedge d\widetilde \lambda_{\tau^{j,m}}}{\sqrt{\mathrm{vol}\big(\widetilde \lambda_{\tau^{j,m}}\big)}}\right)\\
&\geq \sum_{j=1}^{N+1} a_j\left(\frac{C_{j,m}\big((\psi_i/\hat c)\widetilde\lambda_{\tau^{j,m}}\big)}{\sqrt{2k}}-\frac{\int_{S^*_{F}\Sigma}(\psi_i/\hat c)\widetilde\lambda_{\tau^{j,m}}\wedge d\widetilde\lambda_{\tau^{j,m}}}{\sqrt{\mathrm{vol}\big(\widetilde\lambda_{\tau^{j,m}}\big)}}\right)-\frac{D\check c}{\hat c},
\end{aligned}
\end{equation}
where $C_{j,m}\in \mathcal L(S^*_{F}\Sigma,\widetilde \lambda_{\tau^{j,m}})$ is the Reeb $1$-current obtained by Proposition \ref{prop: regularity of spectral invariant}-(1), i.e. $C_{j,m}(\widetilde \lambda_{\tau^{j,m}})=c_{\varsigma_k}(S^*_{F}\Sigma,\widetilde \lambda_{\tau^{j,m}})$, and $D:=\max_{j,m,k}\left\{(2k)^{-\frac{1}{2}}C_{j,m}(\widetilde\lambda_{\tau^{j,m}})
+\mathrm{vol}(\widetilde \lambda_{\tau^{j,m}})^{\frac{1}{2}}\right\}.$ Note that the last inequality in (\ref{equ: linear combination smaller than check c}) is derived from (b), (c$_2$) and Proposition \ref{prop: regularity of spectral invariant}. Moreover, using Proposition \ref{prop: properties of spectral invariant}-(5) and (c$_1$), the maximum $D$ exists and only relies on $\widetilde \lambda_{[0,1]^N}$.
Therefore, by (\ref{equ: linear combination smaller than check c}), we have
\begin{equation}\label{equ: linear combination smaller than hat c}
\left|\sum_{j=1}^{{N+1}} a_j\left(\frac{C_{j,m}\big(\psi_i\widetilde\lambda_{\tau^{j,m}}\big)}{\sqrt{2k}}-\frac{\int_{S^*_{F}\Sigma}\psi_i\widetilde\lambda_{\tau^{j,m}}\wedge d\widetilde \lambda_{\tau^{j,m}}}{\sqrt{\mathrm{vol}\big(\widetilde \lambda_{\tau^{j,m}}\big)}}\right)\right|< \check c(\hat c+D).
\end{equation}
Since $\widetilde \lambda_{\tau^{j,m}}$ converges to $\widetilde\lambda_{\tau^\infty}$ as $m\rightarrow +\infty$ for every $j$, and the Reeb orbits in $\{C_{j,m}\}_{m\geq1}$ exhibit bounded periods, a limiting current can be obtained along a subsequence:
$$C_\infty:=\frac{1}{\sqrt{2k}}\sqrt{\mathrm{vol}\big(\widetilde \lambda_{\tau^\infty}\big)}\lim_{m\rightarrow +\infty}\sum_{j=1}^{N+1} a_jC_{j,m}\in \mathcal L(S^*_{F}\Sigma,\widetilde \lambda_{\tau^\infty}).$$
Let $F'=\widetilde F_{\tau^\infty}\in \mathcal U$. We see from Proposition \ref{prop: properties of Finsler geodesic and Reeb flows}-(1) that $(S^*_F\Sigma,\tilde \lambda_{\tau^\infty})$ is isomorphic to $(S^*_{F'}\Sigma,\lambda_{F'})$. Then $C_\infty$ can be regarded as a Reeb $1$-current in $\mathcal L(S^*_{F'}\Sigma,\lambda_{F'})$. It follows from (\ref{equ: linear combination smaller than hat c}) that
\begin{eqnarray*}
\left|C_\infty(\psi_i \lambda_{F'}) -\int_{S^*_{F'}\Sigma}\psi_i\lambda_{F'}\wedge d\lambda_{F'}\right|< \sqrt{\mathrm{vol}\big(S^*_{F'}\Sigma,\lambda_{F'}\big)}\cdot \check c(\hat c+D),\quad \forall i=1,\cdots,N.
\end{eqnarray*}
Therefore, by fixing $\hat c>0$ small enough so that $F_{[0,1]^N}\subset\mathcal{U}$, one can choose $\check c>0$ sufficiently small so that $\mathrm{vol}\big(S^*_{F'}\Sigma,\lambda_{F'}\big)^{1/2}\cdot \check c(\hat c+D)<\epsilon$ through a suitable perturbation $\widetilde F_{[0,1]^N}$ of $F_{[0,1]^N}$ in $\mathcal U$. Then \eqref{equ: condition of C} holds for $F'$ and $C_\infty$. Finally, using Lemma \ref{lem: genericity of nondegenercy 1}, we can further perturb $F'$ slightly, so that $C_{\infty}$ is a nondegenerate Reeb $1$-current in $\mathcal L(S^*_{F'}\Sigma, \lambda_{F'})$, which still  satisfies \eqref{equ: condition of C}. The proof of this lemma holds for $\mathcal U\subset \mathcal F(\Sigma)$.

The proof for $\mathcal{U}\subset\mathcal F^s(\Sigma)$ or $\mathcal M(\Sigma)$ is analogous. Firstly, the family (\ref{equ: formula of lambda tau}) is a fiberwise scaling of $\lambda_F$ on $S^*_F\Sigma$, which remains a family of contact forms in $\Lambda^{sFin}_F$ or $\Lambda^{Rie}_g$ if $F\in \mathcal F^s(\Sigma)$ or $\mathcal M(\Sigma)$. Since Lemma~\ref{lem: genericity of nondegenercy 1} and Theorem \ref{lem: genericity of nondegenercy 2} are applicable in $\mathcal F^s(\Sigma)$ and $\mathcal M(\Sigma)$, the remaining steps are still valid. Therefore, this lemma holds.
\end{proof}
\subsection{Proof of Theorem \ref{thm: finsler}}
Let $F\in \mathcal F^s(\Sigma)$ and let $\Lambda^{sFin}_{N,\epsilon}$ denote the collection of $\lambda\in \Lambda^{sFin}_F$, which possesses a nondegenerate Reeb 1-current $C^N_{\lambda,\epsilon}\in \mathcal L(S^*_F\Sigma,\lambda)$ such that $\big|C^N_{\lambda,\epsilon}(\psi_i\lambda)-\int_{S^*_F\Sigma}\psi_i\lambda\wedge d\lambda\big|<\epsilon$ for every $i=1,\cdots,N$.

We first prove that $\Lambda^{sFin}_{N,\epsilon}$ is open and dense in $\Lambda^{sFin}_F$. The denseness directly follows from Lemma \ref{claim 4}. To establish the openness, we consider the nondegenerate Reeb 1-current $C^N_{\lambda_0,\epsilon}=\sum_{j=1}^ka_j\gamma_j$ of a given $\lambda_0\in\Lambda^{Fin}_{N,\epsilon}$. For every $\lambda\in \Lambda^{Fin}_F$ sufficiently close to $\lambda_0$, there exists a nondegenerate Reeb $1$-current $C^N_{\lambda,\epsilon}=\sum_{j=1}^ka_{j}\gamma_{j}(\lambda)\in \mathcal L(S^*_F\Sigma,\lambda)$ satisfying the same conditions and smooth in $\lambda$. This establishes the openness. Choose $\epsilon=N^{-1}$. Then the countable intersection
$$\Lambda^{sFin}_{F,0}:=\bigcap_{N\geq 1}\Lambda^{sFin}_{N,N^{-1}}$$
forms a residual set in $\Lambda^{sFin}_F$. For every $\lambda\in \Lambda^{sFin}_{F,0}$, there exists a nondegenerate sequence of Reeb $1$-current $\{C^N_{\lambda,N^{-1}}\}_{N\geq 1}\subset \mathcal L(S^*_F\Sigma,\lambda)$ so that
$$ \left|C^N_{\lambda,N^{-1}}(\psi_i\lambda)-\int_{S^*_F\Sigma}\psi_i\lambda\wedge d\lambda\right|\rightarrow 0,\quad \forall i=1,\cdots,N\quad \text{as}\quad N\rightarrow +\infty.$$
Since the sequence $\{\psi_i\}_{i\geq 1}$ is $C^0$-dense in $C^\infty(\Sigma,\mathbb{R})$, we can infer from its continuity that
$$\left|C^N_{\lambda,N^{-1}}(f\lambda)-\int_{S^*_F\Sigma}f\lambda\wedge d\lambda\right|\rightarrow 0,\quad f\in C^\infty(\Sigma,\mathbb{R}).$$
Finally, since $\Lambda^{sFin}_F$ is one-to-one corresponding to $\mathcal F^s(\Sigma)$, we complete the proof for generic $F\in \mathcal F^s(\Sigma)$. The proof for $\Lambda^{Fin}_F$ is entirely analogous to $\Lambda^{sFin}_F$, as Lemma \ref{claim 4} is applicable in the case of $\Lambda^{Fin}_F$. Consequently, Theorem \ref{thm: finsler} holds.

\subsection{Proof of Theorem \ref{thm: riemann}}
Let $g\in \mathcal M(\Sigma)$. Take a $C^0$-dense sequence of $\{\psi_i\}_{i\geq 1}$ in $C^\infty(\Sigma,\mathbb{R})\subset C^\infty(S^*_g\Sigma,\mathbb{R})$. For any open subset $\mathcal{U}\subset \mathcal M(\Sigma)$, we define a $[0,1]^N$-family of contact forms in $\Lambda^{Rie}_{g}$ as
\begin{equation}\label{equ: formula of lambda tau in Rie}
\lambda_\tau=e^{f_\tau}\lambda_{g},\quad f_\tau=\frac{1}{\hat c}\sum_{i=1}^N\tau_i\psi_i\in C^\infty(\Sigma,\mathbb{R}),\quad \forall \tau\in[0,1]^N.
\end{equation}
Due to Proposition \ref{prop: properties of Riemannian geodesic and Reeb flows}-(2), $\mathcal U$ can be regarded as an open subset of $\Lambda^{Rie}_g$. 
Take a $\hat c\gg 0$ large enough so that $\lambda_{[0,1]^N}\subset\mathcal{U}$. By Lemma \ref{claim 4}, for any $N,\epsilon^{-1}>0$ sufficiently large, there exists a $g'\in \mathcal{U}$ and a nondegenerate Reeb $1$-current $C^N_{\lambda,\epsilon}\in \mathcal L(S^*_{g'}\Sigma,\lambda_{g'})$ such that
\begin{equation}\label{equ: limit of Reeb currents with epsilon}
\left|C^N_{\lambda,\epsilon}(\psi_i\lambda)-\int_{S^*_{g'}\Sigma}\psi_i\lambda_{g'}\wedge d\lambda_{g'}\right|<\epsilon,\quad \forall i=1,\cdots,N.
\end{equation}
Define $\Lambda^{Rie}_{N,\epsilon}$ as the collection of $\lambda\in \Lambda^{Rie}_g$, which possesses a nondegenerate Reeb 1-current $C^N_{\lambda,\epsilon}\in \mathcal L(S^*_g\Sigma,\lambda)$ satisfying (\ref{equ: limit of Reeb currents with epsilon}). Similar to the proof of Theorem \ref{thm: finsler}, we have the denseness and openness of $\Lambda^{Rie}_{N,\epsilon}$ in $\Lambda^{Rie}_g$. Therefore, the countable intersection $\Lambda^{Rie}_{g,0}:=\cap_{N\geq 1}\Lambda^{Rie}_{N,N^{-1}}$ forms a residual subset in $\Lambda^{Rie}_g$. For every $\lambda\in \Lambda^{Rie}_{g,0}$, there is a sequence of nondegenerate Reeb $1$-currents $\{C^N_{\lambda,N^{-1}}\}_{N\geq 1}\subset \mathcal L(S^*_g\Sigma,\lambda)$ satisfying
$$ \left|C^N_{\lambda,N^{-1}}(\psi_i\lambda)-\int_{S^*_g\Sigma}\psi_i\lambda\wedge d\lambda\right|\rightarrow 0,\quad \forall i=1,\cdots,N,\quad \text{as}\quad N\rightarrow +\infty.$$
Since $\{\psi_i\}_{i\geq 1}$ is $C^0$-dense in $C^\infty(\Sigma,\mathbb{R})$, then it follows from continuity that
$$\bigg|C^N_{\lambda,N^{-1}}(f\lambda)-\int_{S^*_g\Sigma}f\lambda\wedge d\lambda\bigg|\rightarrow 0,\quad \forall f\in C^\infty(\Sigma,\mathbb{R}).$$
Finally, since $\Lambda^{Rie}_g$ is one-to-one corresponding to $\mathcal M(\Sigma)$, we complete the proof of Theorem \ref{thm: riemann}.

\subsection{Proof of Theorem \ref{thm: equidis finsler}}
Consider a generic element $F\in \mathcal F^s(\Sigma)$ so that there exists a sequence of Finsler $1$-currents $\{C_i\}_{i\geq 1}\in \mathcal L^F$ satisfying (\ref{equ: limit of Reeb currents Finsler}) from Theorem \ref{thm: finsler}. Rewrite $C_i=\sum_{j=1}^{m_i}a_{i,j}\gamma_{\sigma_{i,j}}\in \mathcal L(S^*_{F}\Sigma,\lambda_F)$ as a Reeb $1$-current with $a_{i,j}\geq 0$ and $\sigma_{i,j}\in \mathrm{CG}^{F}$. Choose an approximating sequence
$\widehat C_i=n_i^{-1}\sum_{j=1}^{m_i}r_{i,j}\gamma_{\sigma_{i,j}}$ ensuring the validity of \eqref{equ: limit of Reeb currents Finsler} with $n_i,r_{i,j}\in\mathbb{Z}$ for every $i\geq 1,j=1,\cdots,n_i$. If $f=1$, we obtain from \eqref{equ: limit of Reeb currents Finsler} that
$$\lim_{i\rightarrow +\infty}\frac{1}{n_i}\sum_{j=1}^{m_i}r_{i,j}l_{F}(\sigma_{i,j})
=2\pi\mathrm{vol}_{F}\Sigma,$$
where $l_{F}(\sigma_{i,j})=\gamma_{\sigma_{i,j}}(\lambda_F)$ and $2\pi\mathrm{vol}_{F}\Sigma=\mathrm{vol}(S^*_F\Sigma,\lambda_F)$ are used. Moreover, by Proposition \ref{prop: properties of Finsler geodesic and Reeb flows}-(5) and (7), we have
$\gamma_{\sigma_{i,j}}(f\lambda)=\sigma_{i,j}(f\beta_{F})$ and $\int_{S^*_{F}\Sigma}f\lambda_F\wedge d\lambda_F=2\pi\int_\Sigma fd\mathrm{vol}_{F}$ for every $f\in C^\infty(\Sigma,\mathbb{R}).$
Then \eqref{equ: limit of Reeb currents Finsler} further implies that
\begin{eqnarray}\label{equ: limit of closed geodesics 2}
\lim_{i\rightarrow +\infty}\frac{\sum_{j=1}^{m_i}r_{i,j}\sigma_{i,j}(f\beta_{F})}{\sum_{j=1}^{m_i}r_{i,j}l_{F}(\sigma_{i,j})}=\frac{ d\mathrm{vol}_{F}(f)}{\mathrm{vol}_{F}\Sigma},\quad \forall f\in C^\infty(\Sigma,\mathbb{R}).
\end{eqnarray}
Denote $\mathcal{A}_N:=\sum_{i=1}^N n_iA_i$, $\mathcal{B}_N:=\sum_{i=1}^N n_iB_i$ and $C(f):=\frac{d\mathrm{vol}_{F}(f)}{\mathrm{vol}_{F}\Sigma}$, where $A_i=\sum_{j=1}^{m_i}r_{i,j}\sigma_{i,j}$, $B_i=\sum_{j=1}^{m_i}r_{i,j}l_{F}(\sigma_{i,j})>0$ and $n_i\in\mathbb{N}_+$. The following claim holds.
\begin{claim}\label{claim 5}
If the sequence $\{n_i\}$ satisfies $n_N\frac{B_{N}}{\mathcal{B}_{N-1}}\rightarrow +\infty$ as $N\rightarrow +\infty$, then
$\underset{N\rightarrow +\infty}{\lim}\frac{\mathcal{A}_N(f)}{\mathcal{B}_N}=C(f)$, $\forall f\in C^\infty(\Sigma,\mathbb{R})$.
\end{claim}
Indeed, denote $\mathcal{A}_{N_0+1,N}=\mathcal{A}_{N}-\mathcal{A}_{N_0}$ and $\mathcal{B}_{N_0+1,N}=\mathcal{B}_{N}-\mathcal{B}_{N_0}$, we have
\begin{eqnarray*}
\frac{\mathcal{A}_N(f)}{\mathcal{B}_N}&=&\frac{\mathcal{A}_{N_0}(f)+\mathcal{A}_{N_0+1,N}(f)}{\mathcal{B}_{N_0}+\mathcal{B}_{N_0+1,N}}\\
&=&\frac{\mathcal{A}_{N_0}(f)}{\mathcal{B}_{N_0}}\frac{\mathcal{B}_{N_0}}{\mathcal{B}_{N_0}+\mathcal{B}_{N_0+1,N}}
+\frac{\mathcal{A}_{N_0+1,N}(f)}{\mathcal{B}_{N_0+1,N}}\frac{\mathcal{B}_{N_0+1,N}}{\mathcal{B}_{N_0}+\mathcal{B}_{N_0+1,N}}\\
&\leq&\frac{\mathcal{A}_{N_0}(f)}{\mathcal{B}_{N_0}}\frac{\mathcal{B}_{N_0}}{\mathcal{B}_{N_0}+\mathcal{B}_{N_0+1,N}}
+(\epsilon_{N_0}(f)+C(f))\frac{\mathcal{B}_{N_0+1,N}}{\mathcal{B}_{N_0}+\mathcal{B}_{N_0+1,N}}\\
&\leq& \frac{\mathcal{A}_{N_0}(f)}{\mathcal{B}_{N_0}}+\epsilon_{N_0}(f)+C(f)=:D_{N_0}(f),
\end{eqnarray*}
where $|\frac{A_N(f)}{B_N}-C(f)|<\epsilon_{N_0}(f)$ for any $N> N_0$ sufficiently large according to (\ref{equ: limit of closed geodesics 2}). Then $\frac{\mathcal{A}_N(f)}{\mathcal{B}_N}$ is bounded and
\begin{eqnarray*}
\frac{\mathcal{A}_N(f)}{\mathcal{B}_N}&=&\frac{\mathcal{A}_{N-1}(f)+n_NA_N(f)}{\mathcal{B}_{N-1}+n_NB_N}\\
&=&\frac{\mathcal{A}_{N-1}(f)}{\mathcal{B}_{N-1}}\frac{1}{1+n_N\frac{B_N}{\mathcal{B}_{N-1}}}
+\frac{A_N(f)}{B_N}\frac{n_N\frac{B_N}{\mathcal{B}_{N-1}}}{1+n_N\frac{B_N}{\mathcal{B}_{N-1}}}\\
&\rightarrow&\lim_{N\rightarrow +\infty}\frac{A_N(f)}{B_N}=C(f).
\end{eqnarray*}
Hence, this claim holds.

Finally, Claim \ref{claim 5} implies (\ref{equ: limit of Finlser closed geodesics}), in which the sequence $\{\sigma_i\}_{i\geq 1}$ can be chosen as $\bigcup_{i\geq 1}\{\sigma_{i,j}^{n_ir_{i,j}}\}_{j=1}^{m_i}$ in $\mathrm{CG}^{F}$. Here $\sigma^r$ denotes the $r$-iteration of $\sigma$. The proof for generic $F\in \mathcal F(\Sigma)$ is entirely the same. Hence, the proof is completed.

\subsection{Proof of Theorem \ref{thm: equidis riemann}}
Consider a generic element $g\in \mathcal M(\Sigma)$ so that there exists a sequence of Reeb $1$-currents $\{C_i\}_{i\geq 1}\in \mathcal{L}^g$ satisfying \eqref{equ: limit of Reeb currents Riemannian} from Theorem \ref{thm: riemann}. Rephrase $C_i=\sum_{j=1}^{m_i}a_{i,j}\gamma_{\sigma_{i,j}}\in \mathcal L(S^*_{g}\Sigma,\lambda_g)$ as a Reeb $1$-current and then replace $C_i$ by an approximating sequence of $\mathbb{Q}$-coefficient Reeb $1$-current $\widehat C_i:=n_i^{-1}\sum_{j=1}^{m_i}r_{i,j}\gamma_{\sigma_{i,j}}$ ensuring \eqref{equ: limit of Reeb currents Riemannian} still holds. If $f\equiv 1$, by \eqref{equ: limit of Reeb currents Riemannian}, we have
$$\lim_{i\rightarrow +\infty}\frac{1}{n_i}\sum_{j=1}^{m_i}r_{i,j}l_{g}(\sigma_{i,j})=2\pi\mathrm{vol}_{g}\Sigma,$$
where the identities $l_{g}(\sigma_{i,j})=\sigma_{i,j}(\lambda_g)$ and $2\pi\mathrm{vol}_{g}\Sigma=\mathrm{vol}(S^*_g\Sigma,\lambda_g)$ are used. Let $s_g$ denote the arc length parameter along every $\sigma\in\mathrm{CG}^g$. According to Proposition \ref{prop: properties of Riemannian geodesic and Reeb flows}-(5) and (7), we have
$$\sigma_{i,j}(f\lambda_g)=\sigma_{i,j}(fds_g),\quad \int_{S^*_g\Sigma}f\lambda_g\wedge d\lambda_g=2\pi\int_\Sigma fd\mathrm{vol}_{g},\quad \forall f\in C^\infty(\Sigma,\mathbb{R}).$$
Then (\ref{equ: limit of Reeb currents Riemannian}) can be further rephrased as
\begin{eqnarray}\label{equ: limit of closed geodesics 3}
\lim_{i\rightarrow +\infty}\frac{\sum_{j=1}^{m_i}r_{i,j}\sigma_{i,j}(fds_{g})}{\sum_{j=1}^{m_i}r_{i,j}l_{g}(\sigma_{i,j})}=\frac{\int_\Sigma f d\mathrm{vol}_{g}}{\mathrm{vol}_{g}\Sigma},\quad \forall f\in C^\infty(\Sigma,\mathbb{R}).
\end{eqnarray}
Combining Claim \ref{claim 5} and \eqref{equ: limit of closed geodesics 3}, we directly obtain (\ref{equ: limit of Riemannian closed geodesics}), in which the sequence $\{\sigma_i\}_{i\geq 1}$ is selected as $\bigcup_{i\geq 1}\{\sigma_{i,j}^{n_ir_{i,j}}\}_{j=1}^{m_i}$ in $\mathrm{CG}^{g}$, similar to the proof of Theorem \ref{thm: equidis finsler}. Hence, the proof is completed.




\begin{thebibliography}{1}

\bibitem{Anosov1967}
D.V. Anosov, Geodesic flows on closed Riemannian manifolds with negative curvature, Proc. Strklov Inst. Math., 90, 1967.

\bibitem{BL10}
V. Bangert, Y. Long, The existence of two closed geodesics on every Finsler 2-sphere, Math. Ann. 346, 335-366, 2010.

\bibitem{BBS1985}
W. Ballmann, M. Brin, R. Spatzier, Structure of manifolds of nonpositive curvature II, Ann. of Math. 122(2), 205-235, 1985.

\bibitem{BM2021}
K. Burns, V.S. Matveev, Open problems and questions about geodesics, Ergod. Th. Dynam. Sys. 41, 641-684, 2021.

\bibitem{Bowen1972}
R. Bowen, The equidistribution of closed geodesics, Amer. J. Math. 94, 413-423, 1972.

\bibitem{BI2016}
D. Burago, S. Ivanov, Boundary distance, lens maps and entropy of geodesic flows of Finsler metrics, Geom. Topo., 20(1), 469-490, 2016.

\bibitem{Chang2005}
K.C. Chang, Methods in Nonlinear Analysis, Springer, Berlin, 2005.

\bibitem{Chen2019}
D. Chen, On the $C^\infty$-closing lemma for Hamiltonian flows on symplectic four manifolds, arXiv: 1904.09900v1.

\bibitem{CS2011}
Y. Coudene, B. Schapira, Counterexamples in non-positive curvature, Discrete Contin. Dyn. Syst., 30, 1095-1106, 2011.

\bibitem{C-GHR2015}
D. Cristofaro-Gardiner, M. Hutchings, V. Ramos, The asymptotics of ECH capacities, Invent. Math. 199, 187-214, 2015.

\bibitem{C-GHHL2021}
D. Cristofaro-Gardiner, U. Hryniewicz, M. Hutchings, H. Liu, Contact three-manifolds with exactly two simple Reeb orbits, Geom. Topo. 27(9), 3801-3831, 2023.

\bibitem{DGZ2017}
M. D$\ddot{\mathrm{o}}$rner, H. Geiges, K. Zehmisch, Finsler geodesics, periodic Reeb orbits, and open books, Euro. J. Math. 4(3), 1058-1075, 2017.

\bibitem{FK2018}
U. Frauenfelder, O. van Koert, The restricted three-body problem and holomorphic curves, Pathways in Mathematics, Birkh\"auser/Springer, Cham, 2018.

\bibitem{Geiges2008}
H. Geiges, An introduction to contact topology, Cambridge University Press, New York, 2008.

\bibitem{Hadamard1898}
J. Hadamard, Les surfaces à courbures opposées et leurs lignes géodésiques. J Math. Pure. Appl., 5(4), 27-73, 1898.

\bibitem{Hutchings2011}
M. Hutchings, Quantitive embedded contact homology, J. Diff. Geom. 88, 231-266, 2011.


\bibitem{Hutchings2014}
M. Hutchings, Lecture notes on embedded contact homology, Contact and symplectic topology, 389-484, Bolyai Soc. Math. Stud., 26, J$\acute{a}$nos Bolyai Math. Soc., Budapest, 2014.

\bibitem{Irie2015}
K. Irie, Dense existence of periodic Reeb orbits and ECH spectral invariants, J. Mod. Dyn. 9, 357-363, 2015.

\bibitem{Irie2018}
K. Irie, Equidistributed periodic orbits and of $C^\infty$-generic three dimensional Reeb flows, J. Symp. Geom. 19(3), 531-566, 2021.

\bibitem{Kat73}  A.B. Katok, Ergodic properties of degenerate integrable Hamiltonian systems.  Izv. Akad. Nauk.
SSSR 37 (1973), [Russian];  Math. USSR-Izv. 7, 535-571, 1973.

\bibitem{KH1995}
A.B. Katok, B. Hasselblatt, Introduction to the Modern Theory of Dynamical Systems, Cambridge University Press, Cambridge, UK, 1995.

\bibitem{LS2022}
X. Li, B. Staffa, On the equidistribution of closed geodesics and geodesic nets, Trans. Amer. Math. Soc., 376(12), 8825-8855, 2023.

\bibitem{LMN2018}
Y. Liokumovich, F.C. Marques, A. Neves, A weyl law for the volume spectrum, Ann. of Math. 187(3), 933-961, 2018.

\bibitem{Pirprasad2022}
A. Pirnapasov, R. Prasad.
Generic equidistribution for area-preserving diffeomorphisms of compact surfaces with boundary. arXiv:2211.07548v1, 2022.

\bibitem{Prasad2023}
R. Prasad, Generic equidistribution of periodic orbits for area-Preserving surface maps, Inte. Math. Rese. Noti., rnac340, 2023.

\bibitem{Long2002}
Y. Long, Index thoery for symplectic paths with applications, Progress in Math. 207, Birkhauser Verlag, Basel, 2002.

\bibitem{MNS2017}
F.C. Marques, A. Neves, A. Song, Equidistribution of minimal hypersurfaces for generic metrics, Invent. Math., 216(2), 421-443, 2019.

\bibitem{MS2004}
D. Mcduff, D. Salamon, J-holomorphic Curves and Symplectic Topology, Amer. Math. Soc., 2004.

\bibitem{Sard1942}
A. Sard, The measure of the critical values of differentable maps, Bull. Amer. Math. Soc., 48, 883-890, 1942.

\bibitem{Smale1965}
S. Smale, An infinite dimensional version of Sard's theorem, Amer. J. Math. 87, 861-867, 1965.

\bibitem{Taubes2010}
C.H. Taubes, Embedded contact homology and Seiberg-Witten Floer cohomology I, Geom. Topol. 14, 2497-2581, 2010.

\bibitem{Zelditch1989}
S. Zelditch, Trace formula for compact $\Gamma\setminus \mathrm{PSL}_2(\mathbb{R})$ and the equidistribution theory of closed geodesics, Duke Math. J. 59(1), 27-81, 1989.

\end{thebibliography}
\end{document}